\definecolor{gold}{rgb}{0.85,0.65,0}
\let\emptyset\varnothing
\newcommand{\set}[1]{\left\{#1\right\}}
\def\C{{\mathbb{C}}}
\def\H{{\mathbb{H}}}
\def\K{{\mathbb{K}}}
\def\P{{\mathbb{P}}}
\def\R{{\mathbb{R}}}
\def\bA{{\bm{A}}}
\def\bB{{\bm{B}}}
\def\bC{{\bm{C}}}
\def\bF{{\bm{F}}}
\def\bH{{\bm{H}}}
\def\bI{{\bm{I}}}
\def\cF{{\cal F}}
\def\cH{{\cal H}}
\def\cI{{\cal I}}
\def\cK{{\cal K}}
\def\cL{{\cal L}}
\def\cM{{\cal M}}
\def\cO{{\cal O}}
\def\cR{{\cal R}}
\def\cS{{\cal S}}
\def\cT{{\cal T}}
\def\cU{{\cal U}}
\def\cV{{\cal V}}
\def\cW{{\cal W}}
\def\cX{{\cal X}}
\def\cZ{{\cal Z}}
\DeclareMathOperator{\rank}{rank}
\DeclareMathOperator{\rint}{rint}
\DeclareMathOperator{\cl}{cl}
\DeclareMathOperator{\dom}{dom}
\DeclareMathOperator{\epi}{epi}
\DeclareMathOperator{\conv}{conv}
\DeclareMathOperator{\rec}{rec}
\DeclareMathOperator{\sign}{sign}
\DeclareMathOperator{\relx}{relax}
\DeclareMathOperator{\Proj}{Proj}
\newcommand{\ones}{{\bm{1}}}
\newcommand{\Sbar}{\cS(\Delta, \K)}
\newcommand{\x}{{\bm x}}
\newcommand{\z}{{\bm z}}
\newcommand{\f}{{\bm f}}
\newcommand{\g}{{\bm g}}
\newcommand{\e}{{\bm e}}
\newcommand{\w}{{\bm w}}
\newcommand{\s}{{\bm s}}
\renewcommand{\a}{{\bm a}}
\renewcommand{\u}{{\bm u}}
\renewcommand{\t}{{\bm t}}
\newcommand{\ga}{{\bm \alpha}}
\newcommand{\gb}{{\bm \beta}}
\newcommand{\gd}{{\bm \delta}}
\newcommand{\gm}{{\bm \mu}}
\begin{document}

\title{Constrained Optimization of Rank-One Functions with Indicator Variables}

\author{Soroosh Shafiee \\
Fatma K{\i}l{\i}n\c{c}-Karzan
}

\authorrunning{Soroosh Shafiee, Fatma K{\i}l{\i}n\c{c}-Karzan} 

\institute{
	Soroosh Shafiee, Operations Research and Information Engineering, Cornell University \\ \email{shafiee@cornell.edu} \\
	Fatma K{\i}l{\i}n\c{c}-Karzan, Tepper School of Business, Carnegie Mellon University \\ \email{fkilinc@andrew.cmu.edu} 
}

\maketitle

\begin{abstract}
Optimization problems involving minimization of a rank-one convex function over constraints modeling restrictions on the support of the decision variables emerge in various machine learning applications. These problems are often modeled with indicator variables for identifying the support of the continuous variables. In this paper we investigate compact extended formulations for such problems through perspective reformulation techniques. In contrast to the majority of previous work that relies on support function arguments and disjunctive programming techniques to provide convex hull results, we propose a constructive approach that exploits a hidden conic structure induced by perspective functions. To this end, we first establish a convex hull result for a general conic mixed-binary set in which each conic constraint involves a linear function of independent continuous variables and a set of binary variables. We then demonstrate that extended representations of sets associated with epigraphs of rank-one convex functions over constraints modeling indicator relations naturally admit such a conic representation. This enables us to systematically give perspective formulations for the convex hull descriptions of these sets with nonlinear separable or non-separable objective functions, sign constraints on continuous variables, and combinatorial constraints on indicator variables. We illustrate the efficacy of our results on sparse nonnegative logistic regression problems.

\keywords{Mixed-integer nonlinear optimization, indicator variables, perspective function, combinatorial constraints, convex hull}
\end{abstract}

\section{Introduction}\label{sec:intro}

We consider specific classes of the general mixed-binary convex optimization problem with indicator variables

\begin{align}
	\label{eq:mixed_binary_convex}
	\min_{\x, \z} \set{H(\x):~ (\x, \z) \in \cX \times \cZ,~ x_i (1 - z_i) = 0, ~ \forall i \in [d]},
\end{align}
where $H: \R^m \to \R$ is a convex function, $\cX \times \cZ \subseteq \R^d \times \set{0, 1}^d$ denotes the feasible set, and $[d] := \set{1, \dots, d}$. Each binary variable $z_i$ determines whether a continuous variable $x_i$ is zero or not by requiring $x_i = 0$ when $z_i = 0$ and allowing $x_i$ to take any value when $z_i = 1$. The sets $\cX$ and $\cZ$ model restrictions on continuous and binary variables, respectively.

The most commonly studied constraints defining $\cZ$ are cardinality, hierarchy and multicolinearity constraints. For example, the cardinality set,  i.e., $\cZ = \{ \z \in \{0,1\}^d: \ones^\top \z \leq \kappa \}$, is widely used in the best subset selection problem in statistics \citep{bertsimas2016best,bertsimas2020sparse}. Several other restrictions for $\cZ$ also arise in statistical problems; e.g., hierarchy constraints of the form of $z_i \leq z_j$ are used in~\citep{bien2013lasso,hazimeh2020learning}, 
constraints on a subset of indicator variables to prevent multicollinearity are proposed in~\citep{bertsimas2016or},
constraints linking the indicator variables associated with regression coefficients in the same group in group variable selection are explored in~\citep{hazimeh2023grouped,huang2012selective},
and constraints for cycle prevention in causal graphical variable selection problems in regression problems are examined in \citep{kucukyavuz2020consistent,manzour2021integer}. 

Various other constraints on both continuous and discrete variables emerge as means to enforce priors from human experts to enhance interpretability and/or improve prediction performance, see e.g.,~\citep{cozad2014learning}. Among these, nonnegativity constraints on continuous variables naturally appear in a range of applications, including signal recovery~\citep{atamturk2021sparse}, portfolio selection~\citep{atamturk2020supermodularity,han2021compact}, healthcare and criminal justice \citep{rudin2018optimized}, chemical process simulation~\citep{cozad2014learning,cozad2015combined}, and spectral decomposition~\citep{behdin2021archetypal}.

Problem~\eqref{eq:mixed_binary_convex} is NP-hard~\citep{natarajan1995sparse}, and as a result in the literature its convex surrogates like lasso~\citep{tibshirani1996regression,hastie2015statistical} as well as branch-and-bound based exact methods~\citep{bertsimas2016best} have been studied. In this paper, we study the epigraph associated with~\eqref{eq:mixed_binary_convex}, i.e., the set
\begin{align*}
	\set{(\tau, \x, \z) \in \R \times \cX \times \cZ :~ H(\x) \leq \tau,~ x_i (1 - z_i) = 0, ~ \forall i \in [d]},
\end{align*}
and provide its closed convex hull characterization.
The primary challenge in this class of problems stems from the complementary constraints between $x_i$ and $z_i$. Whenever an a priori bound on the magnitude of the continuous variables $|x_i|$ is known, these constraints can be linearized via the big-M method. However, finding a suitable bound on the continuous variables to set the big-M parameter can be very challenging, and the resulting big-M formulations are known to be weak. Dating back to \citet{ceria1999convex}, the perspective functions have played a significant role in offering big-M free reformulations of mixed-binary convex optimization problems. In particular, \citet{frangioni2006perspective} introduce perspective cuts based on a linearization of perspective functions. \citet{akturk2009strong} then show that perspective relaxations can be viewed as implicitly including all (infinitely many) perspective cuts. \citet{gunluk2010perspective} examine a \emph{separable} structure where the objective function of~\eqref{eq:mixed_binary_convex} constitutes a sum of univariate functions, taking the form $H(x) = \sum_{i \in [d]} h_i(x_i)$ for some lower semicontinuous and convex functions $h_i: \R \to \R$, and 
study the mixed-binary set
\begin{align}
	\label{eq:H}
	\cH := \set{(\tau, \x, \z) \in \R \times \cX \times \cZ :~ 
	\begin{array}{l}
		\sum_{i \in [d]} h_i(x_i) \leq \tau, \\[1ex]
		x_i (1 - z_i) = 0, ~ \forall i \in [d]
	\end{array}
	}
\end{align}
when $\cX = \R^d$ and  $\cZ = \set{0,1}^d$. Using the perspective reformulation technique,  \cite{gunluk2010perspective} presents an \emph{ideal} formulation of the closed convex hull of $\cH$ ($\cl\conv(\cH)$) in the original space of variables.
\citet{xie2020scalable} revisit the separable structure and give a perspective formulation of $\cl\conv(\cH)$ when $\cX = \R^d, \cZ = \{z \in \set{0,1}^d: \ones^\top \z \leq \kappa \}$ for some integer $1 \leq \kappa \leq d$, and the functions $h_i$ are convex and quadratic. \citet{bacci2019new} extend these findings to convex differentiable functions under certain constraint qualification conditions. More recently, under this separable function assumption, using a support function argument, \citet{wei2022ideal} further generalize \cite{xie2020scalable} and provide an ideal perspective formulation for $\cl\conv(\cH)$ when $\cX = \R^d, \cZ \subseteq \set{0,1}^n$ modeling combinatorial relations, and the functions $h_i: \R \to \R$ are lower semicontinuous and convex. 

In a number of important applications, including portfolio optimization in finance \citep{bienstock1996computational,wei2022convex}, sparse classification and regression in machine learning \citep{atamturk2020safe,bertsimas2016best,bertsimas2021sparse,bertsimas2020sparse,deza2022safe,hazimeh2020fast,hazimeh2021sparse,xie2020scalable} and  their nonnegative variants~\citep{atamturk2020supermodularity,atamturk2021sparse}, and outlier detection in statistics \citep{gomez2021outlier}, the objective function $H$ constitutes a finite sum of \emph{rank-one convex functions}, taking the form $H(\x) = \sum_{j \in [N]} h_j(\a_j^\top \x)$ for some convex functions $h_j$ and vectors $\a_j \in \mathbb R^d$. 
For example, in the case of sparse least squares regression, $N$ denotes the number of samples and $h_j(\a_j^\top \x) = (\a_j^\top \x - b_j)^2$, where $\a_j$ represents the feature or input vector and $b_j$ represents the label or output; or  in the case of logistics regression we have $h_j(\a_j^\top \x) =\log (1+\exp(-b_j\a_j^\top \x))$.
As a result, a growing stream of research \citep{jeon2017quadratic,atamturk2018strong,frangioni2020decompositions,atamturk2021sparse,liu2022graph,wei2022convex} studies~\eqref{eq:mixed_binary_convex} when $H(\x)$ is a \emph{non-separable} quadratic function.  
More recently, a number of papers \citep{atamturk2019rank,atamturk2020supermodularity,han2021compact,wei2020convexification,wei2022ideal} offer strong perspective-based relaxations for rank-one non-separable objective functions by analyzing the closed convex hull of the mixed-binary~set
\begin{align}
	\label{eq:T}
	\cT := \set{(\tau, \x, \z) \in \R \times \cX \times \cZ :~ 
	\begin{array}{l}
		h(\a^\top \x) \leq \tau, \\ [1ex]
		x_i (1 - z_i) = 0, ~ \forall i \in [d]
	\end{array}
	}
\end{align}
under various assumptions on $h(\cdot)$, $\cX$, and $ \cZ$. 
In particular, \citet{atamturk2019rank} examine rank-one convex quadratic functions when $\cX = \R^d$ and $\cZ = \set{0,1}^d$. \citet{wei2020convexification} extend these findings by allowing constraints on the binary variables, that is, $\cZ \subseteq \set{0,1}^d$. Following up on this,  \citet{wei2022ideal} give a perspective formulation of the closed convex hull of $\cT$ ($\cl \conv(\cT)$) in the original space of variables when $\cX = \R^d, \cZ \subset \set{0,1}^d$, and the function $h:\R \to \R$ is lower semicontinuous and convex. Nonetheless, the formulations in \citep{wei2020convexification,wei2022ideal} require including a nonlinear convex inequality for each facet of $\conv(\cZ \backslash \set{{\bm 0}})$. Note that $\conv(\cZ \backslash \set{{\bm 0}})$ itself may be complicated and may require an exponential number of inequalities for its description even when $\cZ$ is a simple boolean set itself.  
In a separate thread, when $\cZ=\set{0,1}^d$, \citet{atamturk2020supermodularity} examine $\cl\conv(\cT)$ when $h$ is a convex quadratic function and there are additional nonnegativity requirements on some of the continuous variables, i.e., $\cX = \{\x \in \R^d:~ x_i \geq 0, ~ \forall i \in \cI \}$ for some $\cI \subseteq [d]$, and propose classes of valid inequalities for $\cl\conv(\cT)$. However, these inequalities given in the original problem space require a cutting-surface based implementation, which may result in numerical issues. To address such issues, \citet{han2021compact} present compact extended formulations for $\cl\conv(\cT)$ when $\cZ=\set{0,1}^d$. These extended formulations are not only applicable to general lower semicontinuous and convex function $h$, but are also easier to implement as they can be embedded within standard branch-and-bound based integer programming solvers.

In this paper, by linking the perspective formulations to conic programming, we propose a conic-programming based approach to address constrained optimization of rank-one functions with indicator variables. Our approach generalizes the existing results by studying simultaneously both sign restrictions on continuous variables and combinatorial constraints on binary variables. Specifically, we provide perspective formulations for $\cl\conv(\cH)$ and $\cl\conv(\cT)$ when $\cX \!=\! \set{\x \in \R^d: x_i \geq 0, \, \forall i \in \cI}$ for some $\cI \subseteq [d]$, $\cZ \subseteq \set{0,1}^d$ and all functions are proper, lower semicontinuous, and convex.
The crux of our approach relies on understanding the recessive and rounding directions associated with these sets involving complementarity constraints. Based on this understanding, we reduce the given complementarity constraints to fewer and much simpler to handle complementarity relations in a lifted space. For example, in the case of the set $\cT$ when $\cX$ does not contain any sign restrictions, following this approach we analyze a set involving a \emph{single} complementarity constraint with a single new binary variable. In this way, our approach provides an effective way to arrive at compact extended formulations for $\cl\conv(\cH)$ and $\cl\conv(\cT)$ in a more direct manner while simultaneously handling arbitrary $\cZ \subseteq \set{0,1}^d$ and arbitrary sign restrictions included in $\cX$. In contrast, the proof techniques in \citep{atamturk2019rank,atamturk2020supermodularity,wei2020convexification,han2021compact,wei2022ideal} are based on support function arguments and disjunctive programming methods, which have resulted in addressing restrictions on the sets $\cX$ and $\cZ$ in separate studies through different approaches.
The key contributions of this paper and its organization are summarized below.

\begin{itemize}[label=$\diamond$]
	\item We derive the necessary tools for the study of the separable and nonseparable sets in Section~\ref{sec:Preparatory}. 
	We begin by studying perspective functions in Section~\ref{sec:perspective}. Specifically, we analyze the epigraph of perspective functions in Lemmas~\ref{lem:epi:projective}~and~\ref{lem:epi:perspective}, and establish that such epigraphs are indeed convex cones. We conclude this subsection by introducing a mixed-binary set that will serve as the primary substructure for the sets $\cH$ and~$\cT$. Using the conic representation of perspective functions from Lemmas~\ref{lem:epi:projective}~and~\ref{lem:epi:perspective}, we show that this set is an instance of conic mixed-binary sets of the following form
	\begin{align}
		\label{eq:S:bar}
		\Sbar = \left\{ (\ga, \gd) \in \R^{m} \times \Delta :  \bA_i \ga_i + \bB_i \delta_i \in \K_i, ~ \forall i \in [p] \right\},
	\end{align}
	where $\Delta \subseteq \{ 0, 1 \}^n$, $\K = \times_{i \in [d]} \, \K_i$, $\K_i$ is convex cone containing the origin for every $i \in [p]$, and the matrices $\bA_i, \bB_i$ have appropriate dimensions. We use the notation $\ga = (\ga_1, \dots, \ga_p)$ to denote the vector $\ga \in \R^m$ in terms of its subvectors $\ga_i \in \R^{m_i}, i \in [p]$, where $\sum_{i \in [p]} m_i = m$.
	Motivated by this observation, we study and characterize the convex hulls of $\Sbar$ in Section~\ref{sec:convex_hull}. We show in Proposition~\ref{prop:conv} that $\conv(\Sbar) = \cS (\conv(\Delta), \K)$ as long as $\Delta \subseteq \set{0,1}^n$ and $\K_i$ is a convex cone containing the origin for all $i \in [d]$. We then establish a simple condition in Theorem~\ref{thm:clconv} under which $\cl\conv(\Sbar) = \cS (\conv(\Delta), \cl(\K))$ holds. 
	These results highlight that the complexity of the convex hull characterizations of sets of the form $\Sbar$ is solely determined by the complexity of the characterization of $\conv(\Delta)$.
	
	\item In Section~\ref{sec:applications}, we discuss the constrained optimization of both separable and rank-one functions with indicator variables through the lens of conic mixed-binary sets. 
	Specifically, we derive compact extended formulations for $\cl\conv(\cH)$ and $\cl\conv(\cT)$ when $\cX = \{ \x \in \R^d : x_i \geq 0, \forall i \in \cI \}$ for some index set $\cI \subseteq [d]$ and $\cZ \subseteq \set{0,1}^d$. 
	
	\begin{itemize}
		\item
		In Section~\ref{sec:applications:separable}, we investigate the separable case and establish $\cl\conv(\cH)$ in Theorem~\ref{thm:separable}. This result extends \citet[Theorem~3]{wei2022ideal} to proper (rather than real-valued) lower semicontinuous convex functions and by allowing for nonnegativity restrictions on $\x$.
	
		\item In Section~\ref{sec:applications:rank1:nosign}, we explore the non-separable case and examine the set $\cT$ when $\cX = \R^d$ and $\cZ \subseteq \set{0,1}^d$. We establish an extended description for $\cl\conv(\cT)$ in Theorem~\ref{thm:connected}. Our description requires a single new binary variable $w$ and relies on the convex hull description of an associated set $\Delta_1$ involving $w$ and $\z$ variables. 
		This therefore reduces the complexity of characterizing $\cl\conv(\cT)$ in this setting to understanding the complexity of $\conv(\Delta_1)$. We show that $\conv(\Delta_1)$ admits simple descriptions in several cases of interest such as when $\cZ$ is defined by a cardinality constraint or by weak or strong hierarchy constraints. In this setting, $\cl\conv(\cT)$ in the original space was first given in \citep[Theorem~1]{wei2022ideal}.
		In contrast to our result, \citet[Theorem~1]{wei2022ideal} provide ideal descriptions for $\cl\conv(\cT)$ that rely on explicit linear inequality description of the set $\conv(\cZ \backslash \set{\bm 0})$ and adding a new nonlinear convex constraint based on every facet of $\conv(\cZ \backslash \set{\bm 0})$. 
		Our extended formulation, however, can immediately take advantage of any relaxation of $\Delta_1$ and opens up ways to benefit from the long-line of research on convex hull descriptions of binary sets and related advanced techniques in optimization software. 
		More recently, for $\cZ = \set{0,1}^d$ and $\cI = \emptyset$, \citet[Proposition~3]{han2021compact} provide an extended formulation involving $2d$ new continuous variables, that are subsequently projected out in \citep[Proposition~4]{han2021compact} to recover \citep[Theorem~1]{wei2022ideal}. In contrast, our extended formulation works for any boolean set $\cZ \subseteq \set{0,1}^d$ and relies on only one additional binary variable. 
	
		\item In Section~\ref{sec:applications:rank1:nonnegative}, we continue to explore the non-separable setting of $\cT$ when $\cX = \{ \x \in \R^d: x_i \geq 0, \forall i \in \cI \}$ for some index set $\cI \subseteq [d]$ and $\cZ \subseteq \set{0,1}^d$. In Theorem~\ref{thm:nonnegative}, we establish an extended formulation for $\cl\conv(\cT)$ using $d$ new binary variables and $d$ new continuous variables. 
		Such an extended formulation for $\cl\conv(\cT)$ when $\cZ$ contains combinatorial constraints and  $\cX$ contains sign restrictions has not been provided in the literature before.
		When $\cZ= \set{0,1}^d$, we recover \citep[Proposition~3]{han2021compact}. Moreover, our result extends \citep{han2021compact} by allowing combinatorial constraints on binary variables, i.e., $\cZ\subset \set{0,1}^d$, and extended-valued functions. Note that the proof techniques from \cite{han2021compact} rely on explicit disjunctive programming arguments and the Fourier-Motzkin elimination method, and therefore, they cannot be easily adapted to handle combinatorial constraints on $\cZ$ (see Remark~\ref{rmk:han}).
	\end{itemize}
	
	\item Finally, in Section~\ref{sec:numerical}, we compare the numerical performance of formulations discussed in Section~\ref{sec:applications} on sparse nonnegative logistic regression with hierarchy constraints. We observe that our new relaxations are of high quality in terms of leading to both good quality continuous relaxation bounds and also significant improvements in the branch and bound performance. 

\end{itemize}

\paragraph{Notation.} 
We use $\overline \R$ to denote the extended real numbers. The indicator function $\mathds{1}_\cI(i) = 1$ if $i \in \cI$ and $=0$ otherwise. Given a positive integer $n$, we let $[n]:=\set{1,\ldots,n}$. 
We use boldface letters to denote vectors and matrices. 
We let $\bm 0$ and $\ones$ denote the vectors with all zeros and ones, respectively, while $\e_i$ denote the $i$th unit basis vector. Given a vector $\x \in \R^n$, we define $\sign(\x)$ as vector in $\R^n$ whose $i$th elements is $+1$ if $x_i > 0$, $-1$ if $x_i < 0$, and $0$ if $x_i = 0$.
For a set $\cS \subseteq \R^m$, we denote by $\rint(\cS), \rec(\cS), \cl(\cS), \conv(\cS), \cl\conv(\cS)$ its relative interior, recessive directions, closure, convex hull and closed convex hull, respectively. Given a boolean set $\cZ$, we denote its continuous relaxation by $\relx(\cZ)$, and for a boolean set $\Delta$ involving binary variables $\w$, the set $\relx_{\w}(\Delta)$ refers to partial continuous relaxation of $\Delta$ obtained by removing the integrality restriction on only the variables $\w$.

\section{Technical Tools}\label{sec:Preparatory}
In this section we build theoretical tools necessary for our study of separable and nonseparable sets.
In Section~\ref{sec:perspective}, we begin by examining perspective functions, recognizing that their epigraphs are convex cones, and relating their epigraphs back to the sets $\cH$ and $\cT$ of our interest as well as the conic mixed-binary~sets $\Sbar$. We then study the convex hull characterization of $\Sbar$ in Section~\ref{sec:convex_hull} and conclude with technical tools to handle a simple linking constraint in taking convex hulls and the closure operation in an extended~space.

\subsection{Perspective function and its closure}\label{sec:perspective} 

Perspective function plays an important role in our analysis.
For a proper lower semicontinuous and convex function $h: \R^d \to \overline \R$ with $h(\bm 0) = 0$, we define its \emph{perspective function} $h^+: \R^d \times \R_+ \to \overline \R$~as
\begin{align*}
	h^+(\x, w) := 
	\begin{cases}
		w \, h(\x / w), & \qquad \text{if } w > 0, \\
		0, & \qquad \text{if } w = 0 \text{~and~} \x = \bm 0, \\
		+\infty, & \qquad \text{otherwise.}
	\end{cases} 
\end{align*} 
The epigraph of $h^+$ is given by 
\[
\epi(h^+) := \set{ (t, \x, w) \!\in\! \R \times \R^d \times \R_+:~ t\ge h^+(\x, w) }.
\]
Note that our definition of perspective function $h^+$ is almost matching with the classical definition of the perspective function given in \citep{hiriart2004fundamentals,combettes2018perspective} as
\[	\widetilde{h}(\x, w) := 
\begin{cases}
	w \, h(\x / w), & \qquad \text{if } w > 0, \\
	+\infty, & \qquad \text{otherwise,}
\end{cases}
\]
where the main distinction between $h^+$ and $\widetilde{h}$ is that $h^+(\bm 0, 0)=0$ whereas $\widetilde{h}(\bm 0, 0)=+\infty$. 
We first establish that the epigraph of the perspective function $h^+$ is a convex cone under standard assumptions. 
\begin{lemma}
	\label{lem:epi:projective}
	Let $h : \R^d \!\to\! \overline \R$ be a proper lower semicontinuous and convex function with $h(\bm 0) = 0$. Then, $\epi(h^+)$ is a convex cone containing the origin.
\end{lemma}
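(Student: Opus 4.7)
My plan is to verify the three defining properties of a convex cone containing the origin: (i) the origin lies in $\epi(h^+)$, (ii) $\epi(h^+)$ is invariant under multiplication by non-negative scalars, and (iii) $\epi(h^+)$ is convex. Property (i) is immediate from the definitional assignment $h^+(\bm 0, 0) = 0$. For (ii), I would first show that $h^+$ is positively homogeneous of degree one by a direct case split on the definition: when $w > 0$, $h^+(\lambda \x, \lambda w) = \lambda w \cdot h(\x/w) = \lambda h^+(\x, w)$ for every $\lambda > 0$; when $(\x, w) = (\bm 0, 0)$, both sides equal zero; and when $w = 0$ with $\x \neq \bm 0$, both sides equal $+\infty$. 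Homogeneity then immediately yields invariance of $\epi(h^+)$ under scaling by any $\lambda > 0$, and the $\lambda = 0$ case reduces to (i).

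The substantive step is (iii). Because $h^+$ is positively homogeneous, convexity of $h^+$ is equivalent to subadditivity, and convexity of the epigraph follows from convexity of $h^+$. I would verify subadditivity by a case analysis on $(w_1, w_2) \in \R_+^2$. In the principal case $w_1, w_2 > 0$, I would rewrite $(\x_1 + \x_2)/(w_1 + w_2)$ as the convex combination $\tfrac{w_1}{w_1 + w_2}(\x_1/w_1) + \tfrac{w_2}{w_1 + w_2}(\x_2/w_2)$, invoke convexity of $h$, and multiply through by $w_1 + w_2$ to obtain $h^+(\x_1 + \x_2, w_1 + w_2) \leq h^+(\x_1, w_1) + h^+(\x_2, w_2)$. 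Boundary cases where some $w_i = 0$ force $\x_i = \bm 0$ for the corresponding right-hand side term to be finite, and they collapse to the homogeneity identity from (ii); cases where one term on the right equals $+\infty$ are vacuous.

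The only real subtlety lies in the nonstandard boundary assignment $h^+(\bm 0, 0) := 0$, which differs from the classical value $\widetilde{h}(\bm 0, 0) = +\infty$ and is precisely what places the origin inside $\epi(h^+)$ and makes the cone structure work. The hypothesis $h(\bm 0) = 0$ is essential here: it makes the boundary value consistent with the degree-one homogeneity identity on the ray $\{(0, \bm 0, 0)\}$, while properness and lower semicontinuity of $h$ ensure $h^+ > -\infty$ so that the subadditivity inequality is never of the form $-\infty \le -\infty$. I do not anticipate a significant technical obstacle beyond careful bookkeeping of these boundary cases; the argument is a standard perspective-function computation adapted to the specific extended-value convention adopted here.
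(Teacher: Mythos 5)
Your proof is correct. Its skeleton matches the paper's: origin membership from the assignment $h^+(\bm 0,0)=0$, the cone property from degree-one positive homogeneity of $h^+$, and convexity of $\epi(h^+)$ from convexity of $h^+$. The one place you genuinely diverge is the substantive step: the paper disposes of convexity of $h^+$ by citing \citep[p.~35]{rockafellar1970convex}, whereas you prove it from first principles by combining positive homogeneity with subadditivity, the latter verified via the standard rewriting of $(\x_1+\x_2)/(w_1+w_2)$ as the convex combination $\tfrac{w_1}{w_1+w_2}(\x_1/w_1)+\tfrac{w_2}{w_1+w_2}(\x_2/w_2)$. Your route is self-contained and makes visible exactly why the nonstandard boundary value $h^+(\bm 0,0)=0$ (together with $h(\bm 0)=0$ and properness of $h$, which rules out $-\infty$ values) is compatible with convexity --- a point the citation conceals; you are also more careful than the paper about the $w=0$ case in the homogeneity/cone argument, which the paper checks only for $w>0$ and leaves implicit otherwise. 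The cost is a longer case analysis; the paper's version is shorter because it leans on the reference. Both arguments are valid.
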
 
\begin{proof}
	Since $h$ is assumed to be proper, lower semicontinuous and convex, the perspective function $h^+$ is proper and convex. This follows from \citep[p.~35]{rockafellar1970convex}.
	Therefore, the set $\epi(h^+)$ is convex. Moreover, $(\bm 0, 0) \in \epi(h^+)$ as $h^+(\bm 0, 0) = 0$. 
	Additionally, $\epi(h^+)$ is indeed a cone, i.e., for any $\lambda > 0$ and $(t, \x, w) \in \epi(h^+)$, we have $(\lambda t, \lambda \x, \lambda w) \!\in\! \epi(h^+)$. This is because for any $w > 0$ and any $(t, \x, w) \in \epi(h^+)$, we have $w \, h(\x / w) \leq t$ or $\lambda w \, h(\lambda \x / (\lambda w)) \leq \lambda t$. We thus conclude that $\epi(h^+)$ is a cone containing the origin.
	\qed
\end{proof}

While $\epi(h^+)$ is a convex cone under standard assumptions, it is not closed. Therefore, we also study the closure of the perspective function. 
Recall that for a proper lower semicontinuous and convex function $h: \R^d \to \overline \R$ with $h(\bm 0) = 0$, the \emph{closure of the perspective function} $h^\pi: \R^d \times \R_+ \to \overline \R$ is defined~as 
\begin{align*}
	h^\pi(\x, w) := 
	\begin{cases}
		w \, h(\x / w), & \qquad \text{if } w > 0, \\
		\lim_{s \downarrow 0} s \, h(\x / s), & \qquad \text{if } w = 0,\\
		+\infty, & \qquad \text{otherwise.}
	\end{cases} 
\end{align*}
It is well known that the closure of $\widetilde{h}$, and consequently $h^+$, is given by $h^\pi$; see for example \citep[Proposition~2.2.2]{hiriart2004fundamentals}. 
The epigraph of $h^\pi$ is given by $\epi(h^\pi) \!=\! \{ (t, \x, w) \!\in\! \R \times \R^d \times \R_+ \!\!:\! t\ge h^\pi(\x, w) \}$. We analyze the cone generated by the perspective function and its closure in the next lemma. 

\begin{lemma}
	\label{lem:epi:perspective}
	Let $h : \R^d \to \overline \R$ be a proper lower semicontinuous and convex function such that $h(\bm 0) = 0$. Then, $\cl(\epi(h^+)) = \epi(h^\pi)$. If, additionally, $\epi(h)$ does not contain a line, $\epi(h^\pi)$ is pointed.
\end{lemma}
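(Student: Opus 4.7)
The plan is to address the two claims independently, as they rely on rather different ingredients.

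For the closure identity $\cl(\epi(h^+)) = \epi(h^\pi)$, I would start from the observation that $h^+$ and $\widetilde{h}$ agree everywhere except at the single point $(\bm 0, 0)$, where $h^+(\bm 0, 0) = 0$ while $\widetilde{h}(\bm 0, 0) = +\infty$. Hence
\[
\epi(h^+) \;=\; \epi(\widetilde{h}) \,\cup\, \set{(t, \bm 0, 0) : t \geq 0}.
\]
Invoking the standard identity $\cl(\widetilde{h}) = h^\pi$ from \citep[Proposition~2.2.2]{hiriart2004fundamentals} already cited in the paper, we get $\cl(\epi(\widetilde{h})) = \epi(h^\pi)$. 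Combining this with the elementary fact $\cl(A \cup B) = \cl(A) \cup \cl(B)$, the desired identity reduces to verifying that the added ray $\set{(t, \bm 0, 0) : t \geq 0}$ is already contained in $\epi(h^\pi)$; but this is immediate from the direct evaluation $h^\pi(\bm 0, 0) = \lim_{s \downarrow 0} s\, h(\bm 0) = 0$.

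For the pointedness claim, the task is to show that any $(t, \x, w) \in \epi(h^\pi) \cap (-\epi(h^\pi))$ must be the origin. I would first exploit that $\epi(h^\pi) \subseteq \R \times \R^d \times \R_+$, so the symmetric inclusion in $-\epi(h^\pi)$ forces $w = 0$. The remaining inequalities reduce to $h^\pi(\x, 0) \leq t$ and $h^\pi(-\x, 0) \leq -t$. The key step is to recognize $h^\pi(\cdot, 0)$ as the recession function $h^\infty$ of $h$, which follows from the classical formula $h^\infty(\x) = \lim_{s \downarrow 0} s\, h(\x/s)$ that holds here thanks to $h(\bm 0) = 0$. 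Since $\epi(h^\infty) = \rec(\epi(h))$, both $(\x, t)$ and $-(\x, t)$ lie in $\rec(\epi(h))$, so $(\x, t)$ belongs to the lineality space of $\epi(h)$. The hypothesis that $\epi(h)$ contains no line forces $(\x, t) = (\bm 0, 0)$, concluding the proof.

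The main obstacle, modest as it is, lies in the pointedness part: carefully identifying the $w = 0$ slice of $\epi(h^\pi)$ with the epigraph of the recession function $h^\infty$, and linking the condition ``$\epi(h)$ contains no line'' to triviality of the lineality space of $\rec(\epi(h))$. Both are textbook facts for proper lower semicontinuous convex functions (see e.g.\ \citep[Ch.~IV]{hiriart2004fundamentals} or \citep[Thm.~8.7]{rockafellar1970convex}) and require little more than a citation. The closure identity is essentially bookkeeping of the single-point discrepancy between $h^+$ and the classical perspective $\widetilde{h}$.
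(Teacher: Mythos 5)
Your proof is correct. The closure identity is handled by essentially the same mechanism as the paper (the only discrepancy between $h^+$ and the classical perspective $\widetilde h$ is the single point $(\bm 0,0)$, and $h^\pi(\bm 0,0)=0$ absorbs the extra ray $\set{(t,\bm 0,0): t\ge 0}$ into $\epi(h^\pi)$); your explicit decomposition $\epi(h^+)=\epi(\widetilde h)\cup\set{(t,\bm 0,0):t\ge 0}$ together with $\cl(A\cup B)=\cl(A)\cup\cl(B)$ is a clean way to make the paper's terse lower-semicontinuity argument precise. For pointedness, you and the paper both start from the same identification of the $w=0$ slice, namely $h^\pi(\x,0)=\lim_{s\downarrow 0}s\,h(\x/s)=\sup\set{\g^\top\x:\g\in\dom(h^*)}$, which is the recession function $h^\infty$; but you then finish differently. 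The paper unpacks this via biconjugation, showing $\g^\top\bar\x=\bar t$ for all $\g\in\dom(h^*)$ and hence $h(\gamma\bar\x)=\gamma\bar t$ for all $\gamma\in\R$, which exhibits an explicit line in $\epi(h)$. You instead invoke $\epi(h^\infty)=\rec(\epi(h))$ (Rockafellar, Theorem~8.5) and observe that $\pm(\bar t,\bar\x)\in\rec(\epi(h))$ places $(\bar t,\bar\x)$ in the lineality space of the nonempty closed set $\epi(h)$, which is trivial when $\epi(h)$ contains no line. Your route is shorter and more conceptual at the cost of citing the recession-cone identity; the paper's is more self-contained, constructing the offending line explicitly from $h=h^{**}$ and $h(\bm 0)=0$. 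Both are valid, and your use of $h(\bm 0)=0$ to justify $h^\infty(\x)=\lim_{s\downarrow 0}s\,h(\x/s)$ is exactly the point that needs care and you handle it correctly.
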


\begin{proof} 
	Note that the perspective function $h^+(\x, w)$ coincides with its closure $h^\pi(\x, w)$ for $w > 0$ and $(\bm 0, 0) \in \dom(h^\pi)$. Then, by lower semicontinuity of $h^\pi$ (see \citep[p.~37 and Theorem~13.3]{rockafellar1970convex}), we conclude $\cl(h^+) = h^\pi$.
	Therefore, we have $\cl(\epi(h^+)) = \epi(h^\pi)$. 
	
	To see that $\epi(h^\pi)$ is pointed, suppose both $(-\bar t, -\bar \x, -\bar w),(\bar t, \bar \x, \bar w) \!\in\! \epi(h^\pi)$. Then, we must have $\bar w=0$. The closure of the perspective function at $(\x, 0)$ satisfies
	\begin{align*}
		h^\pi(\x, 0) = \lim_{w \downarrow 0} w \, h(\x/w) = \sup \set{\g^\top \x: \g \in \dom(h^*)},
	\end{align*}
	where $h^*$ denotes the conjugate of $h$ and the last equality follows from \citep[p.~37 and Theorem~13.3]{rockafellar1970convex}. Thus, as both $(-\bar t, -\bar \x, 0),(\bar t, \bar \x, 0) \in \epi(h^\pi)$, we have 
	\begin{align*}
		\begin{cases}
			h^\pi(\bar \x, 0) = \sup \set{\g^\top \bar \x: \g \in \dom(h^*)} \leq \bar t \\
			h^\pi(-\bar \x, 0) = \sup \set{-\g^\top \bar \x: \g \in \dom(h^*)} \leq - \bar t,
		\end{cases}
	\end{align*}
	which enforces that $\g^\top \bar \x = \bar t$ for all $\g \in \dom(h^*)$. Hence, for any $\gamma \in \R$, the function $h$ satisfies
	\begin{align*}
		h(\gamma \bar \x) 
		= \sup_{\g \in \dom(h^*)} \gamma \g^\top \bar \x - h^*(\g) 
		= \sup_{\g \in \dom(h^*)} \gamma \bar t - h^*(\g) 
		= \gamma \bar t - h(\bm 0) 
		= \gamma \bar t,
	\end{align*}
	where the first equality holds because $h = h^{**}$ (as $h$ is a proper lower semicontinuous and convex function and thus~\citep[Theorem~12.2]{rockafellar1970convex} applies), the second equality follows from the observations that $\g^\top \bar \x = \bar t$, the third equality follows from the definition of biconjugate function and the relation $h(\bm 0) = h^{**}(\bm 0) = 0$. 
	If $(\bar t, \bar \x) \neq \bm 0$, we have $(\gamma \bar t, \gamma \bar \x) \in \epi(h)$ for any $\gamma \in \R$, which contradicts our assumption that $\epi(h)$ does not contain a line. Hence, we have shown that $(-\bar t, -\bar \x, -\bar w),(\bar t, \bar \x, \bar w) \in \epi(h^\pi)$ only if $(\bar t, \bar \x, \bar w) = \bm 0$, implying that $\epi(h^\pi)$ is pointed.
	This then completes the proof.
	\qed
\end{proof}

Lemma~\ref{lem:epi:perspective} extends \citep[Proposition 4]{ramachandra2021robust} to non-differentiable proper functions.
For univariate functions, the requirement that $\epi(h)$ does not contain a line means that $h$ is a nonlinear function.
We next recall that whenever $\epi(h)$ admits a conic representation so will $\epi(h^\pi)$ under a minor condition.
\begin{remark}
	Suppose that the function $h:\R^d \to \overline \R$ is lower semicontinuous and convex, and its epigraph admits the conic representation
	\begin{align*}
		\epi(h) = \set{ (t, \x) \in \R \times \R^d:~ \exists \u \text{~s.t.~} \bH_t t + \bH_x \x + \bH_u \u + \bH_0 \in \H}
	\end{align*}
	for some appropriate matrices $\bH_t, \bH_x, \bH_u$ and $\bH_0$, and a regular cone $\H$.
	Provided that $\bH_u \u \in \H$ implies that $\bH_u \u = \bm 0$, \citet[Proposition~2.3.2]{ben2021lectures} show that the epigraph of $h^\pi$ admits the conic representation 
	\begin{align*}
		\epi(h^\pi) \!=\! \set{ (t, \x, w) \in \R \times \R^d \times \R_+ \!:\! \exists \u \text{~s.t.~} \bH_t t + \bH_x \x + \bH_u \u + \bH_0 w \in \H}.
	\end{align*}
	While \citep{ben2021lectures} present this conic representation only for conic quadratic representable functions, the result and its proof immediately extend to regular cones $\H$ as discussed in \citep[Section~2.3.7]{ben2021lectures}.
	\qed
\end{remark}

We end this subsection by introducing the mixed-binary set 
\begin{align}
	\label{eq:W}
	\cW \!:=\! \set{(\gb, \bm \gamma, \gd) \!\in\! \R^q \times \R^p \times \Delta: 
		\begin{array}{l}
			h_i(\beta_{i,1}) \leq \gamma_i, ~ \forall i \in [p],  \\ [1ex]
			\beta_{i,1} (1 - \delta_i) = 0, ~ \bC_i \gb_i \in \C_i, \,\forall i \in [p]\!
		\end{array}
	}\!,
\end{align}
where $\Delta \!\subseteq\! \set{0,1}^n$, $h_i : \R \!\to\! \overline \R$ is a proper lower semicontinuous convex function with $h_i(0) = 0$, $\C_i$ is a closed convex cone, and $\beta_{i,1}$ denotes the first component of the subvector $\gb_i$.
The set $\cW$ will serve as the primary substructure for characterizing the closed convex hulls of the sets $\cH$ and $\cT$ in Section~\ref{sec:applications}. 
Note that by the definition of the perspective function, the set $\cW$ can be reformulated~as 
\begin{align*}
	\cW = \set{(\gb, \bm \gamma, \gd) \in \R^q \times \R^p \times \Delta:~ 
		\begin{array}{l}
			h^+_i(\beta_{i,1}, \delta_i) \leq \gamma_i, ~\forall i \in [p],  \\ [1ex]
			\bC_i \gb_i \in \C_i, ~\forall i \in [p]
		\end{array}
	}.
\end{align*}
We next show that $\cW$ is an instance of the conic mixed-binary set~\eqref{eq:S:bar}. 

\begin{lemma}
	\label{lem:separableW}
	Consider the mixed-binary set $\cW$ as defined in~\eqref{eq:W}. Then,  for all $i \in [p]$, by letting
	\begin{align}
	\label{eq:param}
	\ga_i := (\gb_i, \gamma_i), ~ \bA_i := 
	\begin{bmatrix}
		\bm 0^\top & 1 \\[0.3ex] \e_1^\top & 0 \\[0.3ex] \bm 0^\top & 0 \\[0.3ex] \bC_i & 0
	\end{bmatrix}, ~
	\bB_i :=
	\begin{bmatrix}
		0 \\ 0 \\ 1 \\ \bm 0
	\end{bmatrix}, ~
	\K_i := \epi(h_i^+) \times \C_i,
\end{align}
and $\K = \times_{i \in [p]}\, \K_i$, we arrive at $\cW =\Sbar$.
\end{lemma}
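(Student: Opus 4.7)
The plan is to verify the lemma by direct substitution into the definition of $\Sbar$. I would first invoke the perspective reformulation of $\cW$ stated immediately before the lemma, which asserts that $\cW$ consists of triples $(\gb, \bm\gamma, \gd) \in \R^q \times \R^p \times \Delta$ satisfying $h_i^+(\beta_{i,1}, \delta_i) \leq \gamma_i$ and $\bC_i \gb_i \in \C_i$ for all $i \in [p]$. This reformulation follows from the definition of $h_i^+$: for $\delta_i = 1$, one has $h_i^+(\beta_{i,1}, 1) = h_i(\beta_{i,1})$; and for $\delta_i = 0$, $h_i^+(\beta_{i,1}, 0)$ equals $0$ if $\beta_{i,1} = 0$ and $+\infty$ otherwise, which precisely encodes the complementarity constraint $\beta_{i,1}(1-\delta_i)=0$ together with $h_i(\beta_{i,1})\leq \gamma_i$ (recalling $h_i(0)=0$).

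Next, I would compute $\bA_i \ga_i + \bB_i \delta_i$ block-by-block with $\ga_i = (\gb_i, \gamma_i)$ and the matrices supplied in \eqref{eq:param}. Reading off the four block rows of $\bA_i$ and adding the corresponding entries of $\bB_i \delta_i$ yields
\begin{align*}
\bA_i \ga_i + \bB_i \delta_i = \bigl(\gamma_i,\; \e_1^\top \gb_i,\; \delta_i,\; \bC_i \gb_i\bigr) = \bigl(\gamma_i,\; \beta_{i,1},\; \delta_i,\; \bC_i \gb_i\bigr).
\end{align*}
Since $\K_i = \epi(h_i^+) \times \C_i$, the conic inclusion $\bA_i \ga_i + \bB_i \delta_i \in \K_i$ decouples into the two conditions $(\gamma_i, \beta_{i,1}, \delta_i) \in \epi(h_i^+)$ and $\bC_i \gb_i \in \C_i$. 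By the very definition of the epigraph, the first is equivalent to $h_i^+(\beta_{i,1}, \delta_i) \leq \gamma_i$. Aggregating these equivalences over $i \in [p]$ reproduces the perspective reformulation of $\cW$ verbatim, establishing $\cW = \Sbar$.

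Finally, I would confirm that the parameterization respects the structural hypotheses required by the definition of $\Sbar$ in \eqref{eq:S:bar}: Lemma~\ref{lem:epi:projective} ensures that $\epi(h_i^+)$ is a convex cone containing the origin, and since $\C_i$ is assumed to be a closed convex cone, the Cartesian product $\K_i = \epi(h_i^+) \times \C_i$ inherits both of these properties. I do not anticipate any substantive obstacle in the argument; the proof is essentially a bookkeeping verification, with the nontrivial conceptual content — namely the conic structure of $\epi(h_i^+)$ — already packaged into Lemma~\ref{lem:epi:projective}.
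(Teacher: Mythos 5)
Your proposal is correct and follows essentially the same route as the paper: both proofs rewrite the constraints of $\cW$ via the perspective function $h_i^+$, observe that $\bA_i\ga_i + \bB_i\delta_i = (\gamma_i, \beta_{i,1}, \delta_i, \bC_i\gb_i)$ so the conic inclusion in $\K_i = \epi(h_i^+)\times\C_i$ decouples exactly into $(\gamma_i,\beta_{i,1},\delta_i)\in\epi(h_i^+)$ and $\bC_i\gb_i\in\C_i$, and invoke Lemma~\ref{lem:epi:projective} for the conic structure of $\epi(h_i^+)$. Your write-up is merely more explicit about the case analysis on $\delta_i$ and the block-by-block computation, which the paper leaves implicit.
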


\begin{proof}
Note that for every $i \in [d]$ the constraint $h^+_i(\beta_{i,1}, \delta_i) \leq \gamma_i$ is simply $(\gamma_i, \beta_{i,1}, \delta_i) \in \epi(h^+_i)$. As $h_i$ is a proper lower semicontinuous convex function with $h_i(0)=0$, by Lemma~\ref{lem:epi:projective}, $\epi(h^+_i)$ is a convex cone containing the origin. Thus, letting $\ga_i := (\gb_i, \gamma_i)$ for all $i \in [p]$, we can express the requirements  $(\gamma_i, \beta_{i,1}, \delta_i) \in \epi(h^+)$ and $\bC_i \gb_i \in \C_i$ in the conic form $\bA_i \ga_i + \bB_i \delta_i \in \K_i$~with the data given in \eqref{eq:param}.
Hence, we have $\cW =\Sbar$ with $\K = \times_{i \in [p]}\, \K_i$.
\qed
\end{proof} 
In the next subsection, we first characterize the closed convex hull of $\Sbar$, and then provide a description for $\cl\conv(\cW)$ using its conic mixed-binary representation.

\subsection{Conic binary sets}\label{sec:convex_hull}

Our results in this subsection and also later on rely on  Carath\'{e}odory's theorem. Carath\'{e}odory's theorem states that if a point lies in the convex hull of a finite-dimensional set, it can be written as a convex combination of a finite number of points from the original set.
We first establish a description for the convex hull of $\Sbar$. 

\begin{proposition}
	\label{prop:conv}
	Consider the set $\Sbar$ defined as in~\eqref{eq:S:bar}, where $\Delta \subseteq \set{0,1}^n$, $\K = \times_{i \in [d]} \, \K_i$, and each $\K_i$ is a convex cone containing the origin for every $i \in [p]$. Then, we have $\conv(\Sbar) = \cS(\conv(\Delta), \K)$.
\end{proposition}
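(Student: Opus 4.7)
The plan is to prove both inclusions, with the reverse direction being the substantive part.

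For $\conv(\Sbar) \subseteq \cS(\conv(\Delta), \K)$, I would first observe that $\cS(\conv(\Delta), \K)$ is itself convex: each constraint $\bA_i \ga_i + \bB_i \delta_i \in \K_i$ is linear in $(\ga_i, \delta_i)$ with $\K_i$ convex, so the constraint defines a convex set, and intersecting with $\R^m \times \conv(\Delta)$ preserves convexity. Since $\Delta \subseteq \conv(\Delta)$ gives $\Sbar \subseteq \cS(\conv(\Delta), \K)$, taking convex hulls yields the desired inclusion.

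For the reverse inclusion, the idea is a constructive decomposition. Take an arbitrary $(\ga, \gd) \in \cS(\conv(\Delta), \K)$. Since $\Delta \subseteq \set{0,1}^n$ is finite, write $\gd = \sum_{j \in J} \lambda_j \gd^{(j)}$ with $\gd^{(j)} \in \Delta$, $\lambda_j > 0$, and $\sum_j \lambda_j = 1$. For each coordinate block $i \in [p]$ I would define lifted vectors $\ga_i^{(j)}$ as follows: if $\delta_i > 0$, set $\ga_i^{(j)} = \ga_i / \delta_i$ whenever $\delta_i^{(j)} = 1$ and $\ga_i^{(j)} = \bm 0$ whenever $\delta_i^{(j)} = 0$; if $\delta_i = 0$, note that $\delta_i = \sum_j \lambda_j \delta_i^{(j)} = 0$ forces $\delta_i^{(j)} = 0$ for every $j \in J$, and set $\ga_i^{(j)} = \ga_i$ for all such $j$.

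The verification has two pieces. First, $\sum_j \lambda_j \ga_i^{(j)} = \ga_i$ in every coordinate: when $\delta_i > 0$, this reduces to the identity $\sum_{j:\,\delta_i^{(j)}=1} \lambda_j = \delta_i$ built into the decomposition, and when $\delta_i = 0$ the combination equals $\ga_i$ trivially. Second, for each $(i,j)$, the conic constraint $\bA_i \ga_i^{(j)} + \bB_i \delta_i^{(j)} \in \K_i$ must hold: in the scaled case $\delta_i^{(j)} = 1$, this equals $\delta_i^{-1}(\bA_i \ga_i + \bB_i \delta_i)$, which lies in $\K_i$ because the unscaled vector does and $\K_i$ is closed under positive scaling; in the zero case with $\delta_i > 0$, the expression is $\bm 0 \in \K_i$; and in the case $\delta_i = 0$ it reduces to $\bA_i \ga_i \in \K_i$, which is already true for the original point. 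Thus $(\ga^{(j)}, \gd^{(j)}) \in \Sbar$ for every $j$ and $(\ga, \gd) = \sum_j \lambda_j (\ga^{(j)}, \gd^{(j)})$, placing $(\ga, \gd)$ in $\conv(\Sbar)$.

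The one place requiring care, rather than a genuine obstacle, is the handling of coordinates with $\delta_i = 0$: the ``scale by $\delta_i$'' trick is unavailable, so one must separately exploit that the constraint degenerates to $\bA_i \ga_i \in \K_i$ and that every $\gd^{(j)}$ in the decomposition shares this $\delta_i^{(j)} = 0$ value, allowing $\ga_i$ to be replicated across all pieces without breaking conic membership. Everything else is driven by $\K_i$ being a cone containing the origin.
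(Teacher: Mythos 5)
Your proposal is correct and follows essentially the same route as the paper's proof: the easy inclusion via convexity of $\cS(\conv(\Delta),\K)$, then a Carath\'{e}odory decomposition of $\gd$ with exactly the same case-split construction of the lifted blocks ($\ga_i/\delta_i$ or $\bm 0$ when $\delta_i>0$, replication of $\ga_i$ when $\delta_i=0$) and the same conic verifications. The point you flag as requiring care --- that $\delta_i=0$ forces $\delta_i^{(j)}=0$ for all $j$ so the block can be replicated --- is precisely the observation the paper makes as well.
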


\begin{proof} 
	We will proceed by showing that $\conv(\Sbar) \subseteq \cS(\conv(\Delta), \K)$ and $\conv(\Sbar) \supseteq \cS(\conv(\Delta), \K)$. The first direction trivially holds as $\K$ is convex and $\conv(A \cap B) \subseteq \conv(A) \cap \conv(B)$. Therefore, we focus on establishing $\conv(\Sbar) \supseteq \cS(\conv(\Delta), \K)$.
	Consider $(\bar \ga, \bar \gd) \in \cS(\conv(\Delta), \K)$. Then, as $\bar \gd \in \conv(\Delta)$, by Carath\'{e}odory's theorem, we have $\bar \gd = \sum_{k \in [q]} \lambda_k \gd^k$ for some finite $q$, $\gd^k \in \Delta$, and $\lambda_k > 0$ with $\sum_{k \in [q]} \lambda_k = 1$. For each $k\in[q]$, we construct the vector $\ga^k$ with subvectors $\ga^k_i$ as follows 
	\begin{align*}
		\ga^k_i := 
		\left\{
		\begin{array}{ll}
			\bar \ga_i, & \text{~if~} \bar \delta_i = 0 \\ [1ex]
			{\bar \ga_i}/{\bar \delta_i}, & \text{~if~} \bar \delta_i \neq 0 \text{~and~} \delta^k_i = 1 \\ [1ex]
			\bm 0,  & \text{~if~} \bar \delta_i \neq 0 \text{~and~} \delta^k_i = 0
		\end{array} .
		\right.
	\end{align*}
	Next, we prove that $(\ga^k, \gd^k) \!\in\! \Sbar$ by showing that $\bA_i \ga^k_i + \bB_i \delta^k_i \!\in\! \K_i$ for all $k \in [q]$ and $i \in [p]$. 
	Consider any index $i\in[p]$. 
	Note that as $(\bar \ga, \bar \gd) \in \cS(\conv(\Delta), \K)$, we have $\bA_i \bar \ga_i + \bB_i \bar \delta_i \in \K_i$.
	If $\bar \delta_i = 0$, then $\bA_i \bar \ga_i \in \K_i$. Also, we have $\delta^k_i = 0$ for all $k \in [q]$ because $\bar \delta_i = \sum_{k \in [q]} \lambda_k \delta^k_i = 0$, $\lambda_k > 0$, and $\delta^k_i \in \set{0,1}$ for every $k \in [q]$. Thus, the point $(\ga^k_i, \delta^k_i)$ satisfies $\bA_i \ga^k_i + \bB_i \delta^k_i = \bA_i \bar \ga_i \in \K_i$ for every $k\in[q]$.
	If $\bar \delta_i \neq 0$, then $\bA_i (\bar \ga_i / \bar \delta_i) + \bB_i \in \K_i$ as $\K_i$ is a cone.
	Moreover, as $\bar \delta_i \neq 0$, there exists at least one index $\hat k \in [q]$ with $\delta^{\hat k}_i = 1$. 
	For any $k\in[q]$ such that $\delta^k_i = 1$, by definition $(\ga^k_i, \delta^k_i)=(\bar \ga_i / \bar \delta_i,1)$ and thus it satisfies $\bA_i \ga^k_i + \bB_i \delta^k_i = \bA_i (\bar \ga_i / \bar \delta_i) + \bB_i \in \K_i$.
	Finally, for any $k\in[q]$ such that $\delta^k_i = 0$, as $\bar \delta_i \neq 0$, by definition $(\ga^k_i, \delta^k_i)=(\bm 0,0)$ and thus it satisfies $\bA_i \ga^k_i + \bB_i \delta^k_i = \bm 0 \in \K_i$ as $\K_i$ contains the origin. All in all, we have shown that $\bA_i \ga^k_i + \bB_i \delta^k_i \in \K_i$ for all $k \in [q]$ and $i \in [p]$, which implies that $(\ga^k, \gd^k) \in \Sbar$ for all $k \in [q]$.
	
	We next prove that $(\bar \ga, \bar \gd) = \sum_{k \in [q]} \lambda_k (\ga^k, \gd^k)$. Recall that $\bar \gd = \sum_{k \in [q]} \lambda_k \gd^k$ where $\gd^k\in\Delta$ for all $k\in[q]$. Then, $\bar \delta_i = \sum_{k \in [q]} \lambda_k \delta^k_i = \sum_{k \in [q]: \delta^k_i = 1} \lambda_k$ holds for all $i\in[p]$. Moreover, for any $i \in [p]$ with $\bar \delta_i \neq 0$, from the definition of $\ga^k$, we deduce that 
	\begin{align*}
		\sum_{k \in [q]} \lambda_k \ga^k_i 
		= \sum_{k \in [q]: \delta^k_i = 1} \lambda_k \ga^k_i + \sum_{k \in [q]: \delta^k_i = 0} \lambda_k \ga^k_i 
		= \frac{\bar \ga_i}{\bar \delta_i} \sum_{k \in [q]: \delta^k_i = 1} \lambda_k = \bar \ga_i,
	\end{align*}
	where the last equality follows from $\bar \delta_i  = \sum_{k \in [q]: \delta^k_i = 1} \lambda_k$.
	Furthermore, for any $i \in [p]$ with $\bar \delta_i = 0$, by the definition of $\ga^k$ we have $\sum_{k \in [q]} \lambda_k \ga^k_i = \sum_{k \in [q]} \lambda_k \bar \ga_i = \bar \ga_i$. Thus, we conclude that  $(\bar \ga, \bar \gd) = \sum_{k \in [q]} \lambda_k (\ga^k, \gd^k)$, which shows $(\bar \ga, \bar \gd) \in \conv(\Sbar)$, as desired. 
	\qed
\end{proof}

The main result of this section depends on the following conditions.
\begin{assumption}
	\label{asmp:clconv}
	There exist a point $\gd \in \Delta$ with $\delta_i = 1$ for every $i \in [p]$. For any $(\ga_i, \delta_i) \in \R^{m_i} \times \R_{++}$, if $\bA_i \ga_i + \bB_i \delta_i \in \cl(\K_i)$, then $\bA_i \ga_i + \bB_i \delta_i \in \K_i$. Additionally, $\bB_i \in \K_i$ for every $i \in [p]$.
\end{assumption}

The first condition in Assumption~\ref{asmp:clconv} holds without loss of generality. Suppose that there exist an index $i \in [d]$ such that for any $\gd \in \Delta$, $\delta_i = 0$. In this case, the continuous subvector $\ga_i$ has no link with the binary value $\delta_i$. Hence, the convex hull of $\Sbar$ can be computed from the convex hull of a lower dimensional set obtained by eliminating the subvector $\ga_i$ from $\ga$ and the value $\delta_i$ from $\gd$. The elimination process continues until the first condition is met. In addition, the second condition implies that $\K_i$ and its closure coincides for any $\delta_i > 0$. All closed cones $\K_i$ and also all  cones that arise as the epigraph of the perspective function $h^+$ of any proper lower semicontinuous and convex function $h$ with $h(\bm 0) = 0$ satisfy this second condition; see the proof of Lemma~\ref{lem:epi:perspective}.

The next theorem gives the closed convex hull characterization of $\Sbar$. 

\begin{theorem}
	\label{thm:clconv}
	Under assumptions of Proposition~\ref{prop:conv} and Assumption~\ref{asmp:clconv}, we have 
	$$\cl\conv(\Sbar) = \cS(\conv(\Delta), \cl(\K)).$$
\end{theorem}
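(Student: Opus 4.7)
The plan is to establish two inclusions. For the easy direction, $\cl\conv(\Sbar) \subseteq \cS(\conv(\Delta), \cl(\K))$, I would start from Proposition~\ref{prop:conv}, which gives $\conv(\Sbar) = \cS(\conv(\Delta), \K) \subseteq \cS(\conv(\Delta), \cl(\K))$. Then I would argue that $\cS(\conv(\Delta), \cl(\K))$ is itself closed: $\Delta \subseteq \{0,1\}^n$ is finite so $\conv(\Delta)$ is a polytope (hence closed), each $\cl(\K_i)$ is closed, and the relevant constraints are preimages of closed sets under continuous affine maps. Taking closures on both sides of the inclusion yields the desired direction.

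For the nontrivial reverse inclusion, $\cS(\conv(\Delta), \cl(\K)) \subseteq \cl\conv(\Sbar)$, I would use a perturbation argument driven by Assumption~\ref{asmp:clconv}. Pick any $(\bar\ga, \bar\gd) \in \cS(\conv(\Delta),\cl(\K))$ and let $\gd^\star \in \Delta$ be the distinguished point with $\delta^\star_i = 1$ for every $i \in [p]$ guaranteed by the assumption. For $\epsilon \in (0,1]$, define
\begin{equation*}
	\gd^\epsilon := (1-\epsilon)\bar\gd + \epsilon \gd^\star, \qquad \ga^\epsilon := \bar\ga.
\end{equation*}
Since $\bar\gd,\gd^\star \in \conv(\Delta)$, we have $\gd^\epsilon \in \conv(\Delta)$, and $\delta^\epsilon_i = (1-\epsilon)\bar\delta_i + \epsilon > 0$ for all $i \in [p]$ because $\bar\delta_i \in [0,1]$. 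For each $i \in [p]$, compute
\begin{equation*}
	\bA_i \ga^\epsilon_i + \bB_i \delta^\epsilon_i \;=\; \bigl[\bA_i \bar\ga_i + \bB_i \bar\delta_i\bigr] + \epsilon(1-\bar\delta_i)\bB_i.
\end{equation*}
The first bracketed term lies in $\cl(\K_i)$ by hypothesis on $(\bar\ga,\bar\gd)$, the second lies in $\K_i$ since $\bB_i \in \K_i$ (Assumption~\ref{asmp:clconv}) and $\epsilon(1-\bar\delta_i)\ge 0$; hence the sum lies in $\cl(\K_i)$. Because $\delta^\epsilon_i > 0$, the second part of Assumption~\ref{asmp:clconv} upgrades this membership to $\bA_i \ga^\epsilon_i + \bB_i \delta^\epsilon_i \in \K_i$. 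Thus $(\ga^\epsilon,\gd^\epsilon) \in \cS(\conv(\Delta), \K) = \conv(\Sbar)$ by Proposition~\ref{prop:conv}, and sending $\epsilon \downarrow 0$ produces $(\bar\ga,\bar\gd)$ as a limit point, proving membership in $\cl\conv(\Sbar)$.

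The main subtlety lies in justifying the crucial upgrade from $\cl(\K_i)$ to $\K_i$ in the perturbed constraint: this is precisely where all three clauses of Assumption~\ref{asmp:clconv} are consumed simultaneously. The existence of $\gd^\star$ with $\delta^\star_i = 1$ for every $i \in [p]$ ensures the perturbation renders every $\delta^\epsilon_i$ strictly positive (without this, the indices $i$ with $\bar\delta_i = 0$ remain trapped at $0$ and the interior condition fails); the inclusion $\bB_i \in \K_i$ keeps the added perturbation term in the cone rather than only in its closure; and the conic Slater-type property finally converts closed-cone membership into open-cone membership. Once these three ingredients are marshalled, no further delicate analysis is needed — the rest is bookkeeping on convex combinations.
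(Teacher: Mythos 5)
Your easy direction is fine, and the algebra in your perturbation step is internally consistent, but the argument rests on a misreading of the first condition in Assumption~\ref{asmp:clconv}. As the paper's own proof, the discussion following the assumption, and the hypothesis of Proposition~\ref{prop:separable} make clear, the intended condition is the per-index statement: for each $i \in [p]$ there exists \emph{some} point $\gd \in \Delta$ with $\delta_i = 1$, where that point may depend on $i$. You instead invoke a single $\gd^\star \in \Delta$ with $\delta^\star_i = 1$ for \emph{all} $i$ simultaneously. That is a strictly stronger hypothesis, and it fails in precisely the applications the theorem is built for: a cardinality set $\cZ = \{\z \in \{0,1\}^d : \ones^\top \z \leq \kappa\}$ with $\kappa < d$, or a strong-hierarchy set, contains no all-ones point, yet satisfies the paper's assumption after the WLOG elimination of never-active indices.

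The gap is not patched by an obvious modification. If you replace $\gd^\star$ by an average of per-index activating points so that $\delta^\star_i > 0$ for every $i$, the perturbation term becomes $\epsilon(\delta^\star_i - \bar\delta_i)\bB_i$ with $\delta^\star_i - \bar\delta_i$ possibly negative, and a negative multiple of $\bB_i$ need not lie in $\cl(\K_i)$ (take $\K_i = \R_+$, $\bB_i = 1$, $\bA_i\bar\ga_i + \bB_i\bar\delta_i = 0$); the perturbed point can then exit $\cl(\K_i)$ and the Slater-type upgrade is unavailable. The paper circumvents this by splitting $\bar\ga$ into $\hat\ga$ (supported on indices with $\bar\delta_i \neq 0$, where $(\hat\ga,\bar\gd)$ lands in $\conv(\Sbar)$ directly via Proposition~\ref{prop:conv} and the closure-upgrade condition) and $\tilde\ga$ (supported on indices with $\bar\delta_i = 0$, which is expressed as $\lim_{\varepsilon \downarrow 0} \sum_j \varepsilon\,(\tilde\ga^j/\varepsilon, \tilde\gd^j)$ using a \emph{different} activating point $\tilde\gd^j$ for each such index $j$). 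Some per-index scaling argument of this kind is needed in place of your single global perturbation.
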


\begin{proof}
	We proceed by showing that $\cl\conv(\Sbar) \subseteq \cS(\conv(\Delta), \cl(\K))$ and $\cl\conv(\Sbar) \supseteq \cS(\conv(\Delta), \cl(\K))$. The first direction trivially holds as $\conv(\Delta)$ is a bounded polytope, $\K$ is a convex set, and $\cl\conv(A \cap B) \subseteq \cl\conv(A) \cap \cl\conv(B)$. Therefore, in the sequel we focus on $\cl\conv(\Sbar) \supseteq \cS(\conv(\Delta), \cl(\K))$.
	Consider any $(\bar \ga, \bar \gd) \in \cS(\conv(\Delta), \cl(\K))$. Then as $\bar \gd \in \conv(\Delta)$, by Carath\'{e}odory's theorem, we have $\bar \gd = \sum_{k \in [q]} \lambda_k \gd^k$ for some finite $q$, $\gd^k \in \Delta$, and $\lambda_k > 0$ with $\sum_{k \in [q]} \lambda_k = 1$. 
	Moreover, we also have $\bA_i \bar \ga_i + \bB_i \bar \delta_i \in \cl(\K_i)$. For all $i\in[p]$, we let $\hat \ga_i := \bar \ga_i \cdot \sign(\bar \delta_i)$ and $\tilde \ga_i := \bar \ga_i \cdot (1 - \sign(\bar \delta_i))$ so that $(\bar \ga_i, \bar \delta_i) = (\hat \ga_i, \bar \delta_i) + (\tilde \ga_i, 0)$, and we define the vectors $\hat \ga$ and $\tilde \ga$ based on the subvector $\hat \ga_i$ and $\tilde \ga_i$, respectively.
	
	We first prove that $(\hat \ga, \bar \gd) \in \conv(\Sbar)$. Recall that $\bar \gd \in \conv(\Delta)$. Consider any index $i \in [p]$. If $\bar \delta_i = 0$, then $\hat \ga_i = \bm 0$, which implies that $\bA_i \hat \ga_i + \bB_i \bar \delta_i = \bm 0 \in \K_i$ as $\K_i$ contains the origin. Moreover, if $\bar \delta_i \neq 0$, then $\hat \ga_i = \bar \ga_i$ and $\bA_i \hat \ga_i + \bB_i \bar \delta_i = \bA_i \bar \ga_i + \bB_i \bar \delta_i \in \cl(\K_i)$. Then, as $\bar \delta_i \neq 0$, by the second condition in Assumption~\ref{asmp:clconv}, we conclude that $\bA_i \hat \ga_i + \bB_i \bar \delta_i \in \K_i$.
	Hence, $\bA_i \hat \ga_i + \bB_i \bar \delta_i \in \K_i$ for all $i \in [p]$, and we have $(\hat \ga, \bar \gd) \in \cS(\conv(\Delta), \K)$. Then, the claim follows from Proposition~\ref{prop:conv} as $\cS(\conv(\Delta), \K) = \conv(\Sbar)$.
	
	We next show that $(\tilde \ga, \bm 0) \in \cS(\conv(\Delta), \cl(\K))$. 
	Consider any index $i \in [p]$. 
	Recall that $\tilde \ga_i = \bar \ga_i \cdot (1 - \sign(\bar \delta_i))$. 
	If $\bar \delta_i \neq 0$, then $\tilde \a_i = \bm 0$ and we have $\bA_i \tilde \ga_i = \bm 0 \in \cl(\K_i) $ as $\K_i$ contains the origin. Additionally, if $\bar \delta_i = 0$, then $\tilde \ga_i = \bar \ga_i$ and we have $\bA_i \tilde \ga_i = \bA_i \bar \ga_i + \bB_i \bar \delta_i \in \cl(\K_i)$ (as $(\bar \ga, \bar \gd) \in \cS(\conv(\Delta), \cl(\K))$ and $(\bar \ga, \bar \gd) = (\hat \ga, \gd) + (\tilde \ga, \bm 0)$). Thus, these two observations together imply that $(\tilde \ga, \bm 0) \in \cS(\conv(\Delta), \cl(\K))$.
	
	Next, we write $(\tilde \ga, \bm 0)$ as the sum of limit points in $\Sbar$. By the first condition in Assumption~\ref{asmp:clconv}, for any $j \in [d]$, there exists a vector $\tilde \gd^j \in \Delta$ with $\tilde \delta^j_j = 1$. Using these binary vectors and introducing the vectors $\tilde \ga^j, j \in [d],$ with subvectors $\tilde \ga^j_j := \tilde \ga_j$ and $\tilde \ga^j_i := \bm 0$ for $i \neq j$, we have 
	\begin{align*}
		\textstyle (\tilde \ga, \bm 0) = \lim_{\varepsilon \downarrow 0} \sum_{i \in [d]} \varepsilon (\tilde \ga^j / \varepsilon, \tilde \gd^j).
	\end{align*}
	Note that the points $(\tilde \ga^j / \varepsilon, \tilde \gd^j) \in \Sbar$ for any $\varepsilon > 0$ and $j \in [d]$. First consider any $j\in[d]$ such that $\bar \delta_j \neq 0$. Then, from $\tilde \ga_j = \bar \ga_j \cdot (1 - \sign(\bar \delta_j))$ we deduce that $\tilde \ga^j_j = \tilde \ga_j= \bm 0$. As by definition $\tilde \ga^j_i =\bm 0$ for all $i\neq j$, we conclude that for any $j\in[d]$ such that $\bar \delta_j \neq 0$ we must have $\tilde \ga^j =\bm 0$. 
	Then, for all $i \in [p]$, $\bA_i (\tilde \ga^j_i / \varepsilon) + \bB_i \tilde \delta^j_i = \bB_i \tilde \delta^j_i \in \K_i$ where the last relation follows from $\bB_i\in\K_i$ (see Assumption~\ref{asmp:clconv}). Thus, we conclude $(\tilde \ga^j / \varepsilon, \tilde \gd^j) \in \Sbar$ for all $j\in[d]$ such that $\bar \delta_j \neq 0$. 
	Now consider any $j\in[d]$ such that $\bar \delta_j = 0$. Then, by definition we have $\tilde \ga^j_j = \bar \ga_j$ and $\ga^j_i = \bm 0$ for $i \neq j$. 
	Hence, we have $\bA_j \tilde \ga_j^j = \bA_j \bar \ga_j + \bB_j \bar \delta_j \in \cl(\K_j)$ (as $(\bar \ga, \bar \gd) \in \cS(\conv(\Delta), \cl(\K))$) and $\bB_j \varepsilon \in \cl(\K_j)$ (as  $\bB_j \in \K_j$ by Assumption~\ref{asmp:clconv}). Since $\cl(\K_j)$ is a convex cone, we deduce that $\bA_j \tilde \ga_j^j + \bB_j \varepsilon \in \cl(\K_j)$. Since $\varepsilon>0$ and thus by applying the second condition in Assumption~\ref{asmp:clconv} we conclude $\bA_i (\tilde \ga^j_j / \varepsilon) + \bB_j \tilde \delta^j_j \in \K_j$. 
	Moreover, for $i \neq j$, we have $\bA_i (\tilde \ga^j_i / \varepsilon) + \bB_i \tilde \delta^j_i = \bB_i \tilde \delta^j_i\in \K_i$ where the last relation follows from $\bB_i\in\K_i$ (see Assumption~\ref{asmp:clconv}). Altogether, these show that $(\tilde \ga^j / \varepsilon, \tilde \gd^j) \in \Sbar$ for all $j\in[d]$.

	Using these relations, we may thus write
	\begin{align*}
		\textstyle
		(\bar \ga, \bar \gd) 
		= \lim_{\varepsilon \downarrow 0} ~ \sum_{k \in [q]} (1 - \varepsilon d) (\hat \ga, \bar \gd) + \sum_{i \in [d]} \varepsilon (\tilde \ga^j / \varepsilon, \tilde \gd^j).
	\end{align*}
	Then, $(\bar \ga, \bar \gd)$ is written as the limit of convex combinations of points from $\Sbar$. Thus, $(\bar \ga, \bar \gd) \in \cl\conv(\Sbar)$, as desired. 
	\qed
\end{proof}

In Section~\ref{sec:applications} we will see that the set $\cW$ plays a crucial role in characterizing the closed convex hulls of $\cH$ and $\cT$. Thus, we next present the characterization of $\cl\conv(\cW)$ using its conic representation discussed in Lemma~\ref{lem:separableW} and Theorem~\ref{thm:clconv}.

\begin{proposition}
	\label{prop:separable}
	Consider the mixed-binary set $\cW$ as defined in~\eqref{eq:W}.
	If for any $i \in [p]$ there exists a point $\gd \in \Delta$ with $\delta_i = 1$, then
	\begin{align*}
		\cl\conv(\cW) = \set{(\gb, \bm \gamma, \gd) \in \R^q \times \R^p \times \conv(\Delta) : 
			\begin{array}{l}
				h_i^\pi(\beta_{i,1}, \delta_i) \leq \gamma_i, ~\forall i \in [p], \\ [1ex]
				\bC_i \gb_i \in \C_i, ~\forall i \in [p]
			\end{array}
		}.
	\end{align*}
\end{proposition}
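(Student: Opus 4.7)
The plan is to recognize $\cW$ as an instance of the conic mixed-binary set $\Sbar$ via Lemma~\ref{lem:separableW}, then verify the hypotheses of Theorem~\ref{thm:clconv} for this specific instantiation, and finally translate the resulting description of $\cl\conv(\Sbar) = \cS(\conv(\Delta),\cl(\K))$ back to the variables $(\gb,\bm\gamma,\gd)$ using Lemma~\ref{lem:epi:perspective}.

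The first step is bookkeeping: Lemma~\ref{lem:separableW} already gives $\cW = \cS(\Delta,\K)$ with $\ga_i = (\gb_i,\gamma_i)$, the block data $\bA_i,\bB_i$ from \eqref{eq:param}, and $\K_i = \epi(h_i^+)\times\C_i$. By Lemma~\ref{lem:epi:projective} each $\epi(h_i^+)$ is a convex cone containing the origin, and $\C_i$ is a closed convex cone, so each $\K_i$ is a convex cone containing the origin. The hypotheses of Proposition~\ref{prop:conv} are therefore immediate.

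The next step is to verify the three parts of Assumption~\ref{asmp:clconv}. The condition that for each $i\in[p]$ there is some $\gd\in\Delta$ with $\delta_i=1$ is exactly the hypothesis of the proposition. For the second condition, I would first compute $\cl(\K_i) = \cl(\epi(h_i^+))\times\C_i = \epi(h_i^\pi)\times\C_i$ using Lemma~\ref{lem:epi:perspective}, and then observe that whenever $\delta_i > 0$, the functions $h_i^+$ and $h_i^\pi$ agree (both equal $\delta_i h_i(\beta_{i,1}/\delta_i)$), so $\bA_i\ga_i + \bB_i\delta_i \in \cl(\K_i)$ forces $\bA_i\ga_i + \bB_i\delta_i \in \K_i$. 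For the third condition, the vector $\bB_i$ from \eqref{eq:param} has coordinates $(\gamma_i,\beta_{i,1},\delta_i) = (0,0,1)$ in the perspective factor and $\bm 0$ in the $\C_i$ factor; since $h_i^+(0,1) = h_i(0) = 0$ we have $(0,0,1)\in\epi(h_i^+)$, and $\bm 0\in\C_i$ since $\C_i$ is a convex cone. Hence Assumption~\ref{asmp:clconv} is satisfied.

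Applying Theorem~\ref{thm:clconv} then yields
\[
\cl\conv(\cW)=\cS(\conv(\Delta),\cl(\K)) = \set{(\gb,\bm\gamma,\gd): \gd\in\conv(\Delta),~\bA_i\ga_i+\bB_i\delta_i\in\cl(\K_i),~\forall i\in[p]}.
\]
Substituting $\cl(\K_i)=\epi(h_i^\pi)\times\C_i$ and unpacking the block rows of $\bA_i,\bB_i$ from \eqref{eq:param} converts the conic membership into the constraints $(\gamma_i,\beta_{i,1},\delta_i)\in\epi(h_i^\pi)$, i.e.\ $h_i^\pi(\beta_{i,1},\delta_i)\leq \gamma_i$, together with $\bC_i\gb_i\in\C_i$; this is precisely the claimed description.

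The main technical obstacle is the second bullet of Assumption~\ref{asmp:clconv}: one must justify that $h_i^+$ and its closure $h_i^\pi$ coincide as soon as $\delta_i > 0$, because only then does membership in $\cl(\K_i)$ imply membership in $\K_i$. This is immediate from the definitions but is the one point where the structure of the perspective cone (its non-closedness only at $w=0$) really enters the argument.
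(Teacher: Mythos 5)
Your proposal is correct and follows essentially the same route as the paper's proof: invoke Lemma~\ref{lem:separableW} to write $\cW=\Sbar$, verify the hypotheses of Proposition~\ref{prop:conv} and the three conditions of Assumption~\ref{asmp:clconv} (using Lemmas~\ref{lem:epi:projective} and~\ref{lem:epi:perspective} and the closedness of $\C_i$), and then apply Theorem~\ref{thm:clconv} with $\cl(\K_i)=\epi(h_i^\pi)\times\C_i$. Your explicit remark that $h_i^+$ and $h_i^\pi$ coincide for $\delta_i>0$ is exactly the point the paper relies on for the second condition of Assumption~\ref{asmp:clconv}, just spelled out in slightly more detail.
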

\begin{proof}
	Recall that Lemma~\ref{lem:separableW} states that $\cW = \Sbar$ with the parameters specified in~\eqref{eq:param}.
	By Lemma~\ref{lem:epi:projective}, each $\epi(h^+_i)$ is a convex cone containing the origin. Hence, $\K_i := \epi(h_i^+) \times \C_i$ is also a convex cone containing the origin (as $\C_i$ is a closed convex cone as well by assumption).
	Therefore, the requirements of Proposition~\ref{prop:conv} are met. In the following we verify the conditions in Assumption~\ref{asmp:clconv}. 
	The first condition holds as we have assumed that for any $i \in [p]$ there exists a $\gd \in \Delta$ with $\delta_i = 1$. 
	For all $i \in [p]$, by Lemma~\ref{lem:epi:perspective}, we have $\cl(\K_i) = \epi(h_i^\pi) \times \C_i$ as $\C_i$ is assumed to be closed. Thus, the second condition of Assumption~\ref{asmp:clconv} easily follows from the definitions of the perspective function and its closure.
	Finally, notice that $h^+_i(0, 1) = h(0) = 0$, implying that $(0,0,1) \in \epi(h^+_i)$. Additionally, $\bm 0 \in \C_i$ as $\C_i$ is a closed convex cone. Hence, $\bB_i \in \K_i$ for all $i \in [p]$, and the last condition of Assumption~\ref{asmp:clconv} also holds. 
	Therefore, from Theorem~\ref{thm:clconv} we deduce that $\cl\conv(\cW)$ is obtained by replacing the set $\Delta$ and the cones $\K_i$ with $\conv(\Delta)$ and $\cl(\K_i)$, respectively. Finally, as $\K_i = \epi(h_i^+) \times \C_i$,  from Lemma~\ref{lem:epi:perspective} we deduce that $\cl(\K_i) = \epi(h_i^\pi) \times \C_i$, which then concludes the~proof.
	\qed
\end{proof}

We finally conclude by introducing a lemma, which illustrates how to characterize the convex hull of a set that is obtained by adding a simple linking constraint to the direct product of sets and a sufficient condition for taking the closure of a set in an extended formulation form. This technical result allows us to study separable structures such as the one arising in $\cH$ in a simplified manner as well as taking the closure of convex hulls given in a lifted space.
The proof of this lemma is presented in Appendix~\ref{appendix:A}.

\begin{lemma}
	\label{lem:tools}
	The following holds:
	\begin{enumerate}[label=(\roman*)]
		\item \label{lem:separable} Let $\cV = \{ (\tau, \gm) : \exists \t \text{ s.t. } (\gm, \t) \in \cU, \ones^\top \t = \tau \}$ for some set $\cU$. Then 
		$$\conv(\cV) = \{ (\tau, \gm) : \exists \t \text{~s.t.~} (\gm, \t) \in \conv(\cU), \ones^\top \t = \tau \}.$$
		
		\item \label{lem:cl} Let $\cV \!=\! \{ \gm : \exists \bm \eta \in \cU \text{ s.t. } \bA \bm \eta = \gm, ~ \bB \bm \eta = \bm b \}$ for some non-empty convex set $\cU$, matrices $\bA$ and $\bB$, and vector $\bm b$. Suppose that there exists a point $\bm \eta^\star \in \rint(\cU)$ satisfying the condition $\bB \bm \eta^\star = \bm b$. If additionally $\bm \eta \!=\! \bm 0$ is the only vector in the set $\rec(\{\bm \eta \in \cl(\cU): \bA \bm \eta = \bm 0, \bB \bm \eta = \bm b\})$, then 
		$$ \cl(\cV) = \{ \gm : \exists \bm \eta \in \cl(\cU) \text{ s.t. } \bA \bm \eta = \gm, \bB \bm \eta = \bm b \}. $$
	\end{enumerate}
\end{lemma}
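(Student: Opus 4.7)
Part~\ref{lem:separable} is essentially a statement that linear projections commute with the convex hull. I would observe that the linear map $L\colon(\gm,\t)\mapsto(\ones^\top\t,\gm)$ realizes $\cV=L(\cU)$, so by the standard identity $\conv(L(\cU))=L(\conv(\cU))$ and the definition of $L$, one immediately obtains the claimed description of $\conv(\cV)$. No additional machinery is required beyond this commutation.

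For part~\ref{lem:cl}, I would introduce the closed convex set $\cU':=\{\bm\eta\in\cl(\cU):\bB\bm\eta=\bm b\}$ and its linear image $\cV':=\bA(\cU')=\{\gm:\exists\bm\eta\in\cl(\cU),\bA\bm\eta=\gm,\bB\bm\eta=\bm b\}$, and prove $\cl(\cV)=\cV'$ in three steps. First, $\cV\subseteq\cV'$ is immediate from $\cU\subseteq\cl(\cU)$, so $\cl(\cV)\subseteq\cl(\cV')$. Second, to prove $\cV'\subseteq\cl(\cV)$, I would use the relative-interior hypothesis: given $\bm\eta\in\cU'$, the points $\bm\eta_\lambda:=(1-\lambda)\bm\eta^\star+\lambda\bm\eta$, $\lambda\in[0,1)$, lie in $\rint(\cU)\subseteq\cU$ by the line-segment principle for relative interiors \citep[Theorem~6.1]{rockafellar1970convex}, and satisfy $\bB\bm\eta_\lambda=\bm b$ by linearity of $\bB$. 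Hence $\bA\bm\eta_\lambda\in\cV$ and $\bA\bm\eta_\lambda\to\bA\bm\eta$ as $\lambda\uparrow 1$, placing $\bA\bm\eta$ in $\cl(\cV)$.

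The remaining inclusion $\cl(\cV)\subseteq\cV'$ reduces to showing that $\cV'$ itself is closed, which is the main technical step. The plan is to invoke the classical closedness criterion for linear images: the image $\bA(\cU')$ of a closed convex set $\cU'$ is closed whenever $\rec(\cU')\cap\ker(\bA)=\{\bm 0\}$ \citep{rockafellar1970convex}. Since $\cl(\cU)$ is closed convex and $\{\bm\eta:\bB\bm\eta=\bm b\}$ is a closed affine subspace, the standard recession-cone calculus yields $\rec(\cU')=\rec(\cl(\cU))\cap\ker(\bB)$, so $\rec(\cU')\cap\ker(\bA)=\rec(\cl(\cU))\cap\ker(\bA)\cap\ker(\bB)$, and this intersection coincides with the recession cone of the set $\{\bm\eta\in\cl(\cU):\bA\bm\eta=\bm 0,\bB\bm\eta=\bm b\}$ appearing in the hypothesis, which equals $\{\bm 0\}$. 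The main obstacle I anticipate is exactly this recession-cone bookkeeping across the three intersections, together with locating a version of the closed-image theorem whose hypotheses are phrased in precisely this way; once those pieces are aligned, the proof closes immediately.
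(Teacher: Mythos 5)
Your proposal is correct in both parts and uses the same underlying facts as the paper, but packages them differently. For part~(i), you treat $\cV$ directly as the image of $\cU$ under the linear map $(\gm,\t)\mapsto(\ones^\top\t,\gm)$ and invoke the identity $\conv(L(\cU))=L(\conv(\cU))$; the paper instead lifts to $\R\times\cU$ intersected with the hyperplane $\ones^\top\t=\tau$ and shows by an explicit convex-combination argument that $\conv$ commutes with this intersection because $\tau$ is unconstrained in the lift. Your route is shorter and avoids that bookkeeping entirely. For part~(ii), both arguments rest on the same two pillars, namely the relative-interior point $\bm\eta^\star$ and the closed-image theorem \citep[Theorem~9.1]{rockafellar1970convex} under the recession-cone condition, but you use the first pillar differently: the paper runs the chain $\cl(\cV)=\cl(\rint(\cV))=\cdots=\cl(\bA(\cl(\widetilde\cU)))$ via commutation of $\rint$ with linear maps and then applies \citep[Corollary~6.5.1]{rockafellar1970convex} to identify $\cl(\widetilde\cU)$ with $\cU'=\{\bm\eta\in\cl(\cU):\bB\bm\eta=\bm b\}$, whereas you establish the inclusion $\bA(\cU')\subseteq\cl(\cV)$ by hand with the line-segment principle. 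The two are logically equivalent here, and your version is the more elementary and self-contained of the two. One shared caveat, which is not a defect of your write-up relative to the paper's: identifying $\rec(\{\bm\eta\in\cl(\cU):\bA\bm\eta=\bm 0,\,\bB\bm\eta=\bm b\})$ with $\rec(\cl(\cU))\cap\ker\bA\cap\ker\bB$ through the recession-cone intersection formula tacitly assumes that this set is nonempty; the paper's own proof makes the same tacit assumption, and it holds in every application of the lemma in the paper.
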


\section{Applications}\label{sec:applications}
In this section we analyze the convex hull of the sets $\cH$ and $\cT$ defined in \eqref{eq:H} and \eqref{eq:T}, respectively.
We assume that all univariate functions vanish at zero. This in fact holds without loss of any generality if zero is in the domain of the functions. In this case, we can always define a new function by subtracting the constant term $h(0)$ from~$h$. 

We will employ a simple yet powerful proof strategy to characterize the closed convex hull of the sets $\cH$ and $\cT$. In the first step, we identify the recessive directions of the given set, and study the associated sets that are augmented by adding these recessive directions. In particular, such augmented sets admit extended formulations with new binary variables and complementary restrictions that model the recessive directions. In the second step, we demonstrate that the original complementary restrictions can be eliminated from these new sets in the extended space. 
Finally, using perspective functions, we transform these new sets into the form of the set $\cW$ and leverage Proposition~\ref{prop:separable} along with Lemma~\ref{lem:tools}.

\subsection{Separable functions} \label{sec:applications:separable}
As a warm up for the subsequent sections, we start by analyzing the separable function case in this subsection. 
Recall the mixed-binary set
\begin{align*}
	\cH = \set{(\tau, \x, \z) \in \R \times \cX \times \cZ :~ 
	\begin{array}{l}
		\sum_{i \in [d]} h_i(x_i) \leq \tau, \\[1ex]
		x_i (1 - z_i) = 0, ~ \forall i \in [d]
	\end{array}
	}.
\end{align*}  
This type of set arises as substructure in a number of applications such as Markowitz portfolio selection~\citep{frangioni2006perspective}, network design~\citep{gunluk2010perspective}, sparse learning \citep{xie2020scalable,bertsimas2021sparse}, and low-rank regression~\citep{bertsimas2021new}. The result of this section relies on the following assumption.
\begin{assumption}
	\label{asmp:separable}
	The set $\cX = \{\x \in \R^d :~ x_i \geq 0, ~ \forall i \in \cI\}$. For any $i \in [d]$, there exists a point $\z \in \cZ \subseteq \set{0,1}^d$ such that $z_i = 1$. The function $h_i: \R \to \overline \R$ is proper, lower semicontinuous and convex with $h_i(0) = 0$ for all $i \in [d]$.
\end{assumption}

\begin{theorem}
	\label{thm:separable}
	Under Assumption~\ref{asmp:separable}, the set $\cH$ defined in~\eqref{eq:H} satisfies
	\begin{align*}
		\textstyle
		\cl\conv(\cH) = \set{ (\tau, \x, \z) \in \R \times \cX \times \conv(\cZ):~ \sum_{i \in [d]} h^\pi_i(x_i, z_i) \leq \tau}.
	\end{align*}
\end{theorem}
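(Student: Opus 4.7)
The plan is to split the scalar inequality $\sum_{i \in [d]} h_i(x_i) \leq \tau$ across the summands by introducing auxiliary continuous variables $t_i$ with $h_i(x_i) \leq t_i$ and $\ones^\top \t = \tau$. This decouples the univariate perspective constraints from the linking sum, allowing us to invoke the conic machinery of Proposition~\ref{prop:separable} together with the projection/closure tools of Lemma~\ref{lem:tools}.

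Concretely, define
\[
    \cU := \set{(\x, \z, \t) \in \cX \times \cZ \times \R^d :~ h_i(x_i) \leq t_i,~ x_i(1 - z_i) = 0,~ \forall i \in [d]},
\]
so that $\cH = \set{(\tau, \x, \z) : \exists \t \text{~s.t.~} (\x, \z, \t) \in \cU,~ \ones^\top \t = \tau}$. Lemma~\ref{lem:tools}(i) then yields $\conv(\cH)$ in the same projected form with $\conv(\cU)$ in place of $\cU$. Next, I recognize $\cU$ as an instance of the set $\cW$ in~\eqref{eq:W}: take $p = d$ with single-coordinate $\gb_i = x_i = \beta_{i,1}$, $\gamma_i = t_i$, $\gd = \z$, $\Delta = \cZ$, and encode the sign restrictions by choosing $\C_i = \R_+$ (with $\bC_i = 1$) for $i \in \cI$ and $\C_i = \R$ (trivial) otherwise. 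Under Assumption~\ref{asmp:separable}, the hypotheses of Proposition~\ref{prop:separable} are satisfied, so
\[
    \cl\conv(\cU) = \set{(\x, \z, \t) \in \cX \times \conv(\cZ) \times \R^d :~ h_i^\pi(x_i, z_i) \leq t_i,~ \forall i \in [d]}.
\]

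To commute the closure operation with the projection under the linking constraint $\ones^\top \t = \tau$, I would apply Lemma~\ref{lem:tools}(ii) with $\bm\eta = (\x, \z, \t)$ ranging over $\conv(\cU)$ and $\bA$ sending $\bm\eta \mapsto (\ones^\top \t, \x, \z)$ (no $\bB\bm\eta = \bm b$ constraint is needed). The relative interior hypothesis holds since $\cU$ is nonempty (it contains $(\bm 0, \bm 0, \bm 0)$ as $h_i(0) = 0$). For the recession hypothesis, any direction $(\Delta\x, \Delta\z, \Delta\t) \in \rec(\cl\conv(\cU))$ whose $\bA$-image vanishes must satisfy $\Delta\z = \bm 0$ (since $\conv(\cZ) \subseteq [0,1]^d$ is bounded), $\Delta\x = \bm 0$, and $\ones^\top \Delta\t = 0$; the epigraphic constraints $t_i \geq h_i^\pi(x_i, z_i)$ force each $(\Delta t)_i \geq 0$, so $\sum_i (\Delta t)_i = 0$ collapses $\Delta\t$ to $\bm 0$. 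Finally, I project out $\t$: the inequality $\sum_i h_i^\pi(x_i, z_i) \leq \tau$ is necessary (by summing $h_i^\pi(x_i, z_i) \leq t_i$) and sufficient (set $t_i := h_i^\pi(x_i, z_i) + (\tau - \sum_j h_j^\pi(x_j, z_j))/d$ whenever all values are finite; a $+\infty$ value makes $\tau$ infeasible on both sides).

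The main obstacle will be the recession-direction analysis required by Lemma~\ref{lem:tools}(ii), which must be carried out for $\cl\conv(\cU)$ rather than $\cU$ itself; the key observation is that compactness of $\conv(\cZ)$ eliminates the $\Delta\z$ component, while the closed-perspective epigraphs pin $\t$-recessions to the nonnegative orthant, after which the linking sum forces $\Delta\t = \bm 0$. Once this closure-projection commutation is established, the remainder reduces to the identification step with $\cW$ and the straightforward projection described above.
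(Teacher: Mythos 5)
Your proposal is correct and follows essentially the same route as the paper: the same auxiliary set with variables $\t$, the same identification with $\cW$ and appeal to Proposition~\ref{prop:separable}, the same use of both parts of Lemma~\ref{lem:tools} to commute convexification and closure with the projection, and the same recession argument (nonnegativity of the $t_i$ from $h_i^\pi(0,0)=0$ plus $\ones^\top \t = 0$). The only differences are cosmetic — you fold the linking constraint into the map $\bA$ rather than into $\bB\bm\eta = \bm b$, and you project out $\t$ by an explicit construction rather than by Fourier--Motzkin — and your handling of $\C_i = \R$ for $i \notin \cI$ is, if anything, slightly more careful than the paper's.
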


Theorem~\ref{thm:separable} extends \citep[Thereom~3]{wei2022ideal} by allowing sign-constrained continuous variables and proper functions. 
\citet{wei2022ideal} prove the convex hull result by showing that the support function of $\cH$ and the set presented in Theorem~\ref{thm:separable} coincide when $\cI = \emptyset$ and $h$ is real-valued.
In contrast, our proof is constructive and make use of the hidden conic structure introduced by the perspective function. As a by product, it allows us to easily include nonnegativity constraints on the set $\cX$. 

We demonstrate the importance of the requirement imposed on the binary set $\cZ$ in Assumption~\ref{asmp:separable} with an example. If $\cZ = \set{\bm 0}$, then the set $\cH$ simplifies to $\cH = \set{(\tau, \bm 0, \bm 0): \tau \geq 0}$. Thus, $\cl\conv(\cH) = \cH$. In contrast, Theorem~\ref{thm:separable} with no restriction on $\cZ$ would suggest that the set $\{(\tau, \x, \bm 0): \sum_{i \in [d]} h_i^\pi(x_i, 0) \leq \tau \}$ gives the closed convex hull of $\cH$, which may not be correct in this simple case. The requirement on the binary set $\cZ$, however, holds without loss of generality.
If there exist an index $i \in [d]$ such that for any $\z \in \cZ$, $z_i = 0$, then $x_i = 0$ due to the logical constraint $x_i(1 - z_i) = 0$. Thus, we can eliminate $x_i$ and $z_i$ from the set $\cH$ and compute $\cl\conv(\cH)$ from a lower dimensional set.

\begin{proof} \!\emph{of Theorem~\ref{thm:separable}~}
	We first introduce the auxiliary mixed-binary set
	\begin{align*}
		\overline \cH :=
		\set{ (\x, \z, \t) \in \cX \times \cZ \times \R^d : ~
			\begin{array}{l}
				h_i(x_i) \leq t_i, ~ \forall i \in [d], \\ [1ex]
				x_i (1 - z_i) = 0, ~ \forall i \in [d]
			\end{array}
		}.
	\end{align*}
	By letting $\gb_i := (x_i), \bm \gamma := \t, \gd := \z, \Delta := \cZ, \bC_i = 1, \C_i = \R_+$, the set $\overline \cH$ can be represented as an instance of the set $\cW$ defined as in~\eqref{eq:W}. Hence, applying Proposition~\ref{prop:separable} yields
	\begin{align*}
		\cl\conv(\overline \cH) =
		\set{ (\x, \z, \t) \in \cX \times \cZ \times \R^d : h_i^\pi(x_i, z_i) \leq t_i, ~ \forall i \in [d]
		}.
	\end{align*}
	In the following we characterize $\cl\conv(\cH)$ in terms of $\cl\conv(\overline \cH)$. Notice that $\cH \!=\! \{ (\tau, \x, \z) \!:\! \exists \t \text{ s.t. } (\x, \z, \t) \in \overline \cH, \ones^\top \t = \tau \}$.
	Therefore, applying Lemma~\ref{lem:tools}\,\ref{lem:separable} yields $\conv(\cH) \!=\! \{ (\tau, \x, \z): \exists \t \text{ s.t. } (\x, \z, \t) \in \conv(\overline \cH), \, \ones^\top \t = \tau \}$. By letting 
	\begin{align*}
		\begin{array}{c}
			\gm := (\tau, \x, \z), ~ \bm \eta := (\tau, \x, \z, \t), ~ \cU := \R \times \conv(\overline \cH), \\ [1ex]
			\bA := [\bI_{2d+1}, ~ \bm 0], ~ \bB := (-1, \bm 0, \bm 0, \ones)^\top, ~ \bm b := 0,
		\end{array}
	\end{align*}
	we can apply Lemma~\ref{lem:tools}\,\ref{lem:cl} to conclude that
	\begin{align}
		\label{eq:clconv:H}
		\cl\conv(\cH) = \{ (\tau, \x, \z): \exists \t \text{ s.t. } (\tau, \x, \z, \t) \in \R \times \cl\conv(\overline \cH), \, \ones^\top \t = \tau \}.
	\end{align}  
	This holds because the first requirement of Lemma~\ref{lem:tools}\,\ref{lem:cl} is trivially satisfied as the variable $\tau$ is free to choose from the set $\cU$. In addition, $\bA \bm \eta = \bm 0$ if $\bm \eta \in \{(0, \bm 0, \bm 0, t) : \ones^\top \t = 0 \} = \set{\bm 0}$, where the equality holds because, by definition of $\cU$, $h_i^\pi(0, 0) = 0 \leq t_i$ enforcing $\t$ to be the vector of all zeros. 
	The proof concludes by using the relation~\eqref{eq:clconv:H} and then applying Fourier-Motzkin elimination~\citep{dantzig1973fourier} to project out $\t$.
	\qed
\end{proof}

We conclude this section by a remark for the case of \emph{totally unimodular} binary sets. The set $\cZ = \{ \z \in \{0, 1\}^d: \bF \z \leq \f \}$ is totally unimodular if the matrix $\bF$ is totally unimodular and the vector $\f$ is integer-valued. Recall that every square submatrix of a totally unimodular matrix has determinant $0$, $+1$ or $-1$. Examples of totally unimodular sets include cardinality constraint set, in which $\cZ = \{ \z \in \{0,1\}^d: \ones^\top \z \leq \kappa \}$ for some $\kappa \in [d]$, weak hierarchy set, where $\cZ = \{ \z \in \{0,1\}^d: z_d \leq \sum_{i \in [d-1]} z_i \}$, and strong hierarchy set, where $\cZ = \{ \z \in \{0,1\}^d: z_d \leq z_i, ~ \forall i \in [d-1] \}$.

\begin{remark}\label{rem:disjoint:TU}
	Suppose that the set $\cZ = \{ \z \in \{0, 1\}^d: \bF \z \leq \f \}$ is totally unimodular. Then, under Assumptions~\ref{asmp:separable}, we have
	\begin{align*}
		\textstyle
		\conv(\cH) = \set{ (\tau, \x, \z) \in \R \times \cX \times [0, 1]^d:~ \sum_{i \in [d]} h^\pi_i(x_i, z_i) \leq \tau, ~ \bF \z \leq \f}.
	\end{align*}
\end{remark}

\subsection{Rank-1 functions with $\cX = \R^d$} 
\label{sec:applications:rank1:nosign}
Recall the mixed-binary set
\begin{align*}
	\cT = \set{ (\tau, \x, \z) \in \R \times \cX \times \cZ:~ h (\a^\top \x ) \leq \tau, ~ x_i (1 - z_i) = 0, ~ \forall i \in [d] },
\end{align*} 
where we assume that the vector $\a \in \R^d$ satisfies $a_i \neq 0$ for all $i \in [d]$. 
In this section we will consider the case that $\cX = \R^d$.
This type of set appears as a substructure in sparse regression~\citep{bertsimas2016best,xie2020scalable} and sparse classification~\citep{bertsimas2021sparse,wei2022ideal}.

For a given set $\cZ \subseteq \set{0,1}^d$, we construct a graph $G_{\cZ} = (V, E)$, where $V = [d]$ denotes its nodes, $E$ denotes its edge, and $\{i, j\} \in E$ if and only if $i \neq j$ and there exits a vector $\z \in \cZ$ with $z_i = z_j = 1$.
The structure of $\cZ$, as represented by its associated graph $G_{\cZ}$, plays an important role in the description of $\conv(\cT)$.

\subsubsection{Connected graph}
\label{sec:applications:rank1:connected}
Our result relies on the following assumption.

\begin{assumption}
	\label{asmp:connected}
	The set $\cX = \R^d$. The set $\cZ \subseteq \set{0,1}^d$ satisfies $\cZ \neq \set{ \bm 0 }$. The graph $G_{\cZ}$ associated with $\cZ$ is connected. The vector $\a \in \R^d$ satisfies $a_i \neq 0$ for all $i \in [d]$. The function $h: \R \to \overline \R$ is proper, lower semicontinuous, and convex with $h(0) = 0$.
\end{assumption} 

\begin{theorem}
	\label{thm:connected}
	Under Assumption~\ref{asmp:connected}, the set $\cT$ defined in~\eqref{eq:T} satisfies 
	\begin{align*}
		\cl\conv(\cT) = \set{ (\tau, \x, \z) \in \R \times \R^d \times \R^d:~ \exists w \in \R \text{~s.t.~}~
		\begin{array}{l}
			h^\pi(\a^\top \x, w) \leq \tau, \\ [1ex]
			(w, \z) \in \conv(\Delta_1)
		\end{array} 
		},
	\end{align*}
	where $\Delta_1 = \{(w, \z) \in \set{0,1} \times \cZ:~ w \leq \ones^\top \z\}$.
\end{theorem}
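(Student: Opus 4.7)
The plan is to identify the right-hand side $\cR$ with the projection onto $(\tau, \x, \z)$ of a closed convex set lifted from $\cT$ by adjoining a single new binary variable $w$, compute that lifted convex hull via Proposition~\ref{prop:separable}, and close the gap to $\cl\conv(\cT)$ using the connectedness of $G_\cZ$. To realize the lift, I introduce the auxiliary set
\begin{align*}
\cT_1 := \set{(\tau, y, \z, w) \in \R \times \R \times \R^d \times \set{0,1} : h(y) \leq \tau,~ y(1-w) = 0,~ (w, \z) \in \Delta_1},
\end{align*}
which is an instance of the conic mixed-binary set $\cW$ in~\eqref{eq:W} with $p = 1$, $\beta_{1,1} = y$, $\gamma_1 = \tau$, $\gd = (w, \z)$ ranging over $\Delta_1$, and trivial conic constraint. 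Assumption~\ref{asmp:connected} supplies $\z^\star \in \cZ$ with $\ones^\top \z^\star \ge 1$, giving $(1, \z^\star) \in \Delta_1$ and verifying the hypothesis of Proposition~\ref{prop:separable}, which then yields $\cl\conv(\cT_1) = \set{(\tau, y, \z, w) : h^\pi(y, w) \leq \tau,~ (w, \z) \in \conv(\Delta_1)}$. Substituting $y = \a^\top \x$ produces the closed convex set
\begin{align*}
\cL := \set{(\tau, \x, \z, w) : h^\pi(\a^\top \x, w) \leq \tau,~ (w, \z) \in \conv(\Delta_1)},
\end{align*}
whose projection onto $(\tau, \x, \z)$ is exactly $\cR$.

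For $\cl\conv(\cT) \subseteq \cR$, the inclusion $\cT \subseteq \cR$ follows by lifting $(\tau, \x, \z) \in \cT$ with $w = 1$ when $\z \neq \bm 0$ (so $h^\pi(\a^\top\x, 1) = h(\a^\top\x) \leq \tau$) and $w = 0$ otherwise (so $\x = \bm 0$ by complementarity and $h^\pi(0,0) = 0 \leq \tau$). Convexity of $\cR$ is immediate, and closedness follows from Lemma~\ref{lem:tools}\,\ref{lem:cl}: the recession slice $\set{(\tau, \x, \z, w) \in \cL : (\tau, \x, \z) = \bm 0}$ collapses to the origin, since $(w, \bm 0) \in \conv(\Delta_1)$ forces $w = 0$ (the only $\Delta_1$-atom compatible with $\z = \bm 0$ is $(0, \bm 0)$).

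The harder inclusion $\cR \subseteq \cl\conv(\cT)$ is where Assumption~\ref{asmp:connected} is essential. Given $(\tau, \x, \z) \in \cR$ with witness $w$, the strategy is to approximate $(\tau, \x, \z)$ by convex combinations of $\cT$-points via a perturbation-and-limit argument. A Carath\'{e}odory decomposition $(w, \z) = \sum_k \lambda_k (w^k, \z^k)$ with atoms in $\Delta_1$ identifies candidate supports $\set{\supp(\z^k)}_k$, but generically $\supp(\x)$ may not lie inside their union. To remedy this, I would perturb the decomposition by adding $\epsilon_n$-weighted edge atoms of $\Delta_1$: for each coordinate $i$ not already covered, connectedness of $G_\cZ$ furnishes some $\z^e \in \cZ$ containing $i$ in its support. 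Continuous vectors $\x^k_n$ with $\supp(\x^k_n) \subseteq \supp(\z^k)$ are then built by routing residual contributions through the rank-one kernel vectors $a_j \e_i - a_i \e_j \in \ker(\a^\top)$ associated with edges $\set{i,j}$ of $G_\cZ$, with magnitudes of order $1/\epsilon_n$, while keeping each $\a^\top \x^k_n$ and hence $h(\a^\top \x^k_n)$ uniformly bounded. Taking $\epsilon_n \downarrow 0$ produces a sequence $(\tau_n, \x_n, \z_n) \in \conv(\cT)$ converging to $(\tau, \x, \z)$. The sub-case $w = 0$, where $h^\pi(\a^\top\x, 0) = \lim_{s \downarrow 0} s \, h(\a^\top \x/s)$ encodes recessive behavior, reduces to the $w > 0$ case via a further perturbation $w_n = \epsilon_n \downarrow 0$.

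The principal obstacle lies in orchestrating this perturbation: the edge-supported kernel directions must be sized so that the perturbed decompositions remain in $\conv(\Delta_1)$, the $\x^k_n$'s stay supported on $\supp(\z^k)$, and $\sum_k \lambda_k h(\a^\top \x^k_n) \to \tau$. Connectedness of $G_\cZ$ is precisely what supplies enough edge atoms to span $\ker(\a^\top)$ and absorb any residual in $\x$ outside $\supp(\z)$; without it, $\cR$ would in general be strictly larger than $\cl\conv(\cT)$.
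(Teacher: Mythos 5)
Your overall architecture matches the paper's: lift $\cT$ with a single new binary $w$ tied to $\z$ through $\Delta_1$, convexify the lifted set via the conic machinery of Proposition~\ref{prop:separable}, certify that the projection is closed via Lemma~\ref{lem:tools}\,\ref{lem:cl} (your recession check, forcing $w=0$ when $\z=\bm 0$, is exactly the paper's), and invoke connectedness of $G_{\cZ}$ to absorb directions in $\ker(\a^\top)$. The easy inclusion is handled correctly. However, the inclusion of your right-hand side into $\cl\conv(\cT)$ --- which is the entire content of the theorem beyond bookkeeping --- is left as a sketch, and you yourself flag its ``orchestration'' as an unresolved obstacle. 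Concretely, you never verify that your $\epsilon_n$-perturbed Carath\'{e}odory decompositions remain in $\conv(\Delta_1)$, that the vectors $\x^k_n$ built from kernel directions of magnitude $1/\epsilon_n$ keep $\sum_k \lambda_k h(\a^\top \x^k_n)$ convergent to $\tau$ (note $h$ is only proper l.s.c.\ convex, so uniform boundedness of the arguments does not by itself control the sum), nor how the $w=0$ case interacts with the limit defining $h^\pi(\cdot,0)$. As written, this is a missing proof, not a proof.

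The paper closes this gap with a cleaner two-step reduction that avoids perturbing decompositions altogether. First (Lemma~\ref{lem:rec:T:connected}), any $\bar\x$ with $\a^\top\bar\x=0$ is written \emph{exactly} as a telescoping sum $\sum_l \x^l$ of kernel vectors, each supported on a single edge of a path visiting all nodes of the connected graph $G_{\cZ}$, with each support covered by some $\z^l\in\cZ$; since $(0,\lambda\x^l,\z^l)\in\cT$ for all $\lambda$, a single limit of convex combinations shows these are recession directions of $\cl\conv(\cT)$. Second (Proposition~\ref{prop:connected}), one proves the exact set identity $\cT+\cR=\cT'$, where $\cT'$ carries only the logical constraint $\z=\bm 0\Rightarrow\a^\top\x=0$: given $(\tau,\x,\z)\in\cT'$ with $z_i=1$ for some $i$, the point $\x-\bar\x$ with $\bar\x:=\x-\tfrac{\a^\top\x}{a_i}\e_i$ concentrates $\a^\top\x$ on coordinate $i$ and lies in $\cT$, while $\bar\x$ is a recession direction. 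After this, the complementarity constraints $x_i(1-z_i)=0$ have disappeared and only the single relation $s(1-w)=0$ remains, so Proposition~\ref{prop:separable} and Lemma~\ref{lem:tools}\,\ref{lem:cl} finish the proof with no limiting argument on function values. To repair your write-up, replace the perturbation scheme by these two explicit constructions; everything else you have is sound.
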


Theorem~\ref{thm:connected} provides a compact extended formulation for $\cl\conv(\cT)$ by introducing a \emph{single} binary variable $w$ and  finitely many \emph{linear} constraints describing $\conv(\Delta_1)$. The variable $w$ embeds the entire complexity of describing $\cl\conv(\cT)$ into the complexity of characterizing $\conv(\Delta_1)$, a polyhedral set.
Another advantage of Theorem~\ref{thm:connected} is that it does not require a complete description of $\conv(\Delta_1)$ to be known in advance when solving the mixed-binary problem with a solver; one can simply provide the binary formulation for $\Delta_1$ to the solver instead of $\conv(\Delta_1)$. Moreover, as the set $\Delta_1$ is defined by linear inequalities and today's commercial optimization solvers have advanced features to dynamically generate effective cuts for binary (or mixed-integer) sets defined by linear inequalities, solvers easily and automatically generate strong cuts that approximate the convex hulls of such sets as $\Delta_1$. This is in particular crucial when $\conv(\Delta_1)$ is difficult to characterize completely. In such cases, our formulation enables the users to rely on the optimization software to generate effective cuts on the fly.

In contrast to our compact extended formulation, \citet[Thereom~1]{wei2022ideal} give an ideal formulation in the original space of variables for $\cl\conv(\cT)$. This ideal formulation, however, relies on an explicit formulation for $\conv(\cZ \backslash \set{\bm 0})$. Specifically, \citep[Proposition~1]{wei2022ideal} shows that
\begin{align*}
	\conv(\cZ \backslash \set{0}) = \conv(\cZ) \cap \set{\z \in \set{0,1}^d:~ \bm \gamma^\top \z \geq 1, ~ \forall \bm \gamma \in \cF}
\end{align*}
for some finite set $\cF \subset \R^d$. Using this observation, \citet[Theorem~1]{wei2022ideal} proves that 
\begin{align}
	\label{eq:wei}
	\cl\conv(\cT) = \set{ (\tau, \x, \z) \in \R \times \R^d \times \R^d: 
	\begin{array}{l}
		h(\a^\top \x) \leq \tau, ~\z \in \conv(\cZ), \\ [1ex]
		h^\pi(\a^\top \x, \bm \gamma^\top \z) \leq \tau, ~ \forall \bm \gamma \in \cF
	\end{array} 
	}.
\end{align}  
This description of $\cl\conv(\cT)$ requires access to $\cF$, i.e., the explicit inequality description of $\conv(\cZ \backslash \set{0})$ which may not be easy to attain. 
Furthermore, the set $\cF$ can be very complex with possibly exponentially many (in the dimension $d$) inequalities and then the set in~\eqref{eq:wei} will have exponentially many \emph{nonlinear} convex  inequalities. Nonlinear convex constraints as opposed to linear ones are often more expensive to handle by the optimization solvers, and this makes the ideal formulation given in~\eqref{eq:wei} quite impractical from a computational standpoint whenever $|\cF|$ is large. Furthermore, to the best of our knowledge, the cut generation strategies for the nonlinear constraints as opposed to the linear ones are rather limited in today's optimization solvers. Thus, the nonlinear inequalities included in formulation~\eqref{eq:wei} would have a rather limited effect on the solvers in terms of generating further cuts on the fly beyond what the user specifies.

To prove Theorem~\ref{thm:connected}, we first make an observation about the recessive direction of $\cT$. 
\begin{lemma}
	\label{lem:rec:T:connected}
	Under Assumption~\ref{asmp:connected}, if $(\tau, \x, \z) \!\in\! \cl\conv (\cT)$, then for any $\bar \x \!\in\! \R^n$ satisfying $\a^\top \bar \x = 0$ we have $(\tau, \x + \bar \x, \z) \in \cl\conv(\cT)$.
\end{lemma}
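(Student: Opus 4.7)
The plan is to show that $(0, \bar\x, 0)$ lies in the lineality space of $\rec(\cl\conv(\cT))$ whenever $\a^\top \bar\x = 0$. Since $\cl\conv(\cT)$ is closed and convex, this is equivalent to exhibiting, for each such $\bar\x$, a point $(\tau^*, \x^*, \z^*) \in \cl\conv(\cT)$ together with the entire line $\{(\tau^*, \x^* + t\bar\x, \z^*) : t \in \R\}$ inside $\cl\conv(\cT)$, which then forces $(\tau, \x, \z) + (0, \bar\x, 0) \in \cl\conv(\cT)$ for all $(\tau, \x, \z) \in \cl\conv(\cT)$.

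The building blocks will be the ``edge directions'' $\bm u^{(i,j)} := a_j \e_i - a_i \e_j$, one for each edge $\{i,j\} \in E$ of the graph $G_\cZ$. Each $\bm u^{(i,j)}$ clearly satisfies $\a^\top \bm u^{(i,j)} = 0$, and by definition of the edge set $E$, there exists $\z^{(i,j)} \in \cZ$ with $z^{(i,j)}_i = z^{(i,j)}_j = 1$. For any $t \in \R$, the point $(0, t\bm u^{(i,j)}, \z^{(i,j)})$ belongs to $\cT$: indeed $h(\a^\top (t\bm u^{(i,j)})) = h(0) = 0$, and $(t\bm u^{(i,j)})_k (1 - z^{(i,j)}_k) = 0$ for every $k$ because $\bm u^{(i,j)}$ is supported on $\{i,j\}$, where $z^{(i,j)}$ equals $1$. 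Hence an entire line passes through $(0,\bm 0,\z^{(i,j)}) \in \cT$ along direction $(0, \bm u^{(i,j)}, 0)$, placing this direction in the lineality of $\rec(\cl\conv(\cT))$.

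The crux, and the step I expect to be the main obstacle, is to verify that $\spann\{\bm u^{(i,j)} : \{i,j\} \in E\} = \{\bar\x \in \R^d : \a^\top \bar\x = 0\}$. The inclusion $\subseteq$ is immediate; for the reverse I will use the hypotheses that $G_\cZ$ is connected and that $a_i \neq 0$ for all $i \in [d]$. When $d \geq 2$, connectedness provides a spanning tree $T \subseteq E$ with exactly $d-1$ edges. A standard leaf-peeling argument on $T$ shows $\{\bm u^{(i,j)} : \{i,j\} \in T\}$ is linearly independent: if $\sum \lambda_{ij} \bm u^{(i,j)} = \bm 0$ and $k$ is a leaf of $T$ with unique neighbor $\ell$, the $k$th coordinate of the sum is $\pm\lambda_{k\ell} a_\ell$, forcing $\lambda_{k\ell} = 0$ since $a_\ell \neq 0$; iterating prunes the tree to nothing. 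Thus the span has dimension at least $d-1$, which matches the dimension of the hyperplane $\{\bar\x : \a^\top \bar\x = 0\}$, yielding equality. The $d = 1$ case is trivial since then $\a^\top \bar\x = 0$ forces $\bar\x = 0$.

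Combining the pieces, for arbitrary $\bar\x$ with $\a^\top \bar\x = 0$ I will write $\bar\x = \sum_{\{i,j\} \in T} \lambda_{ij} \bm u^{(i,j)}$ and conclude, by linearity of the lineality space, that $(0, \bar\x, 0) \in \rec(\cl\conv(\cT))$. Adding this direction to any $(\tau, \x, \z) \in \cl\conv(\cT)$ produces $(\tau, \x + \bar\x, \z) \in \cl\conv(\cT)$, which is the desired conclusion.
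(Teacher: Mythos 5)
Your proof is correct, and it reaches the conclusion by a route that differs from the paper's in its technical execution while sharing the same underlying insight. Both arguments hinge on the observation that each edge $\{i,j\}$ of $G_{\cZ}$ supplies a two-sparse direction orthogonal to $\a$ (your $\bm u^{(i,j)} = a_j\e_i - a_i\e_j$) whose entire line through $(0,\bm 0,\z^{(i,j)})$ lies in $\cT$. Where you diverge is in how the decomposition of a general $\bar\x \in \a^\perp$ is organized and how the final membership is deduced. The paper takes a walk $i_1,\dots,i_{k+1}$ visiting every node of the connected graph $G_{\cZ}$, builds explicit edge-supported vectors $\x^l$ with coefficients $\sum_{j\in\cM_l} a_j\bar x_j / a_{i_l}$ chosen so that the sum telescopes to $\bar\x$, and then exhibits $(\tau,\x+\bar\x,\z)$ as an explicit limit of convex combinations of points of $\cT$. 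You instead restrict to a spanning tree, prove linear independence of its $d-1$ edge directions by leaf-peeling, conclude by a dimension count that they span the $(d-1)$-dimensional hyperplane $\{\bar\x : \a^\top\bar\x = 0\}$, and then invoke the standard facts that a line contained in a closed convex set places its direction in the lineality space, that the lineality space is a linear subspace, and that adding any lineality direction to any point of the set stays in the set. Your route buys a cleaner, computation-free argument (no telescoping identity, no explicit limiting sequence) at the cost of appealing to recession-cone machinery for closed convex sets (e.g., \citep[Theorem~8.3]{rockafellar1970convex}); the paper's route is more self-contained and constructive, which is in keeping with its stated emphasis on constructive proofs, and the explicit coefficients make the convex-combination witness visible. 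One small point worth making explicit in a final write-up: the nonemptiness of $\cl\conv(\cT)$ needed to speak of its recession cone is supplied by the hypothesis $(\tau,\x,\z)\in\cl\conv(\cT)$ itself, and the degenerate case $d=1$ (no edges, but also $\bar\x=\bm 0$ forced) is correctly disposed of separately, so no gap remains.
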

\begin{proof} 
	As $G_{\cZ}=(V,E)$ is a connected graph, there exists a path of length $k \geq d$ visiting all nodes in the graph. Let $i_1, \dots, i_{k+1}$ be the nodes we visit in such a path. Then, there is an edge between node $i_l$ to node $i_{l+1}$, and hence, by definition of $G_{\cZ}$, there exists a binary vector $\z^l \in \cZ$ such that $z^l_{i_l} = z^l_{i_{l+1}} = 1$ for any $l \in [k]$.
	Additionally, let $\cM_0 := \emptyset$ and $\cM_j$ be the set of nodes we visit after $j-1$ steps in a such path for any $j \in [k+1]$. Note that each $\cM_j$ is a set containing only unique values from the set $[d]$. For instance, $\cM_1 = \set{i_1}$ and $\cM_{k+1} = [d]$ as we visit all nodes after $k$ steps.
	
	Based on $\bar \x$, we construct the vectors $\x^l, l \in [k],$ as follows
	\begin{align*}
		\x^l := \frac{\sum_{j \in \cM_{l}} a_j \bar x_j}{a_{i_l}} \e_{i_l} - \frac{\sum_{j \in \cM_{l}} a_j \bar x_j}{a_{i_{l+1}}} \e_{i_{l+1}},
	\end{align*}
	where $\e_i \in \R^d$ denotes the $i$th unit basis of $\R^d$.
	By construction, for every $l \in [k]$, $\x^l$ satisfies $\a^\top \x^l = 0$, and the support of $\x^l$ is covered by the binary vector $\z^l \in \cZ$. Note also that 
	\begin{align*}
		\sum_{l \in [k]} \x^l 
		&= \sum_{l \in [k]} \left( \frac{\sum_{j \in \cM_{l}} a_j \bar x_j}{a_{i_l}} \e_{i_l} - \frac{\sum_{j \in \cM_{l}} a_j \bar x_j}{a_{i_{l+1}}} \e_{i_{l+1}} \right) \\
		&=\sum_{l \in [k]} \left( \frac{\sum_{j \in \cM_{l}} a_j \bar x_j}{a_{i_l}} \e_{i_l} - \frac{\sum_{j \in \cM_{l}} a_j \bar x_j}{a_{i_{l+1}}} \e_{i_{l+1}} \right) - \frac{\sum_{j \in \cM_{0}} a_j \bar x_j}{a_{i_1}} \e_{i_1} \\
		& \qquad + \frac{\sum_{j \in \cM_{k+1}} a_j \bar x_j}{a_{i_{k+1}}} \e_{i_{k+1}} \\
		&= \sum_{l \in [k+1]} \frac{\sum_{j \in \cM_{l}} a_j \bar x_j - \sum_{j \in \cM_{l-1}} a_j \bar x_j}{a_{i_l}} \e_{i_l}
		= \sum_{i \in [d]} \bar x_i \e_i = \bar \x
	\end{align*}
	where the first equality follows from the construction of $\x^l$, the second equality holds because $\sum_{j \in \cM_{0}} a_j \bar x_j =  0$ (as $\cM_{0} = \emptyset$) and $\sum_{j \in \cM_{k+1}} a_j \bar x_j = \a^\top \bar \x = 0$ (as $\cM_{k+1} = [d]$). The third equality holds by rearranging the summation, while the final equality follows from the fact that for any $l\in[k+1]$ the expression $\sum_{j \in \cM_{l}} a_j \bar x_j - \sum_{j \in \cM_{l-1}} a_j \bar x_j = a_{i_l} \bar x_{i_l}$ if we visit node $i_l$ for the first time after $l-1$ step and $=0$ otherwise, and the fact that the path we consider visits all nodes. Hence, we conclude $\sum_{l \in [k]} \x^l = \bar \x$.
	
	As $h(\a^\top \x^l)=h(0)=0$, and the support of $\lambda \x^l$ and $\z^l$ are the same, we conclude that $(0, \lambda \x^l, \z^l) \in \cT$ for every $\lambda \in \R$ and $l \in [k]$. Since $\cl\conv(\cT)$ is both closed and convex, we have
	\begin{align*}
		(\tau, \x + \bar \x, \z) = \lim_{\lambda \to +\infty} \left[\frac{\lambda -k}{\lambda} (\tau, \x, \z) + \sum_{l \in [k]} \frac{1}{\lambda}(0, \lambda \x^l, \z^l)\right] &\in \cl\conv(\cT),
	\end{align*} 
	where the equality holds as $\sum_{l \in [k]} \x^l = \bar \x$. Thus, the claim follows.
	\qed
\end{proof}

Inspired by the set $\cT$, we then introduce the mixed-binary set 
\begin{align}
	\label{eq:T:bar}
	\overline \cT := \set{(\tau, \x, \z, s, w) \!\in\! \R \!\times\! \R^d \!\times\! \R^d \!\times\! \R \!\times\! \R: 
		\begin{array}{l}
			h(s) \leq \tau, ~ \a^\top \x = s, \\ [1ex]
			s (1-w) = 0, ~ (w, \z) \in \Delta_1 
		\end{array}
	},
\end{align}
where $\Delta_1$ is defined as in Theorem~\ref{thm:connected}.
Note that $\cT$ admits the representation
\begin{align*}
	\cT = \set{(\tau, \x, \z): \exists s,w \text{~s.t~} (\tau, \x, \z, s, w) \in \overline \cT, ~ x_i(1-z_i) = 0,~ \forall i \in [d] }.
\end{align*}
However, we establish that $\cl\conv(\cT)$ can be obtained solely from $\overline \cT$ using the result of Lemma~\ref{lem:rec:T:connected}. In particular, all individual complementary relations between the continuous variables $x_i$ and the binary variables $z_i$ can be dropped in the description of $\cl\conv(\cT)$. This results in a considerably simpler representation involving only a \emph{single} complementary relation (between the continuous variable $s$ and the binary variable $w$) given in $\overline{\cT}$. In fact, we will see that the variable $w$ essentially models the logical constraint $\z = \bm 0 \implies \a^\top \x = 0$.

\begin{proposition}
	\label{prop:connected}
	Under Assumption~\ref{asmp:connected}, we have
	\begin{align*}
		\cl\conv(\cT) = \cl\conv \left(\set{(\tau, \x, \z): \exists s,  w \text{~s.t.~} (\tau, \x, \z, s, w) \in \overline \cT
		} \right),
	\end{align*}
	where $\overline \cT$ is as defined in~\eqref{eq:T:bar}.
\end{proposition}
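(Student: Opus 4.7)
The plan is to establish set equality by proving the two inclusions $\cT \subseteq \widetilde\cT$ and $\widetilde\cT \subseteq \cl\conv(\cT)$, where $\widetilde\cT := \{(\tau,\x,\z) : \exists\,s,w \text{ such that } (\tau,\x,\z,s,w)\in\overline\cT\}$. Since $\cl\conv(\cT)$ is closed and convex, applying $\cl\conv$ to both inclusions immediately produces the claimed identity $\cl\conv(\cT) = \cl\conv(\widetilde\cT)$. The inclusion $\cT \subseteq \widetilde\cT$ is obtained by an explicit lifting: given $(\tau,\x,\z)\in\cT$, set $s := \a^\top\x$ and take $w := 1$ if $\z\neq\bm 0$ and $w := 0$ otherwise. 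If $\z = \bm 0$, the original complementarities $x_i(1-z_i)=0$ force $\x = \bm 0$, so $s = 0$ and the new complementarity $s(1-w)=0$ holds trivially, while $(w,\z) = (0,\bm 0) \in \Delta_1$; if $\z\neq\bm 0$, then $\ones^\top\z\geq 1$ makes $(w,\z)=(1,\z)\in\Delta_1$ valid, and $s(1-w)=0$ is automatic. All other constraints of $\overline\cT$ are immediate.

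The substantive direction $\widetilde\cT \subseteq \cl\conv(\cT)$ proceeds by splitting on the value of $w$. If $w = 0$, then $s(1-w)=0$ forces $s = \a^\top\x = 0$; since $\z\in\cZ$ and $h(0)=0\leq \tau$, the point $(\tau,\bm 0,\z)$ lies in $\cT$, and Lemma~\ref{lem:rec:T:connected} applied with the kernel direction $\x$ yields $(\tau,\x,\z)\in\cl\conv(\cT)$. If $w = 1$, then the $\Delta_1$ constraint $w\leq\ones^\top\z$ guarantees some index $i$ with $z_i = 1$; I would then set $\tilde\x := (s/a_i)\e_i$. The support of $\tilde\x$ lies in $\{i\}$ where $z_i = 1$, so every $\tilde x_j(1-z_j) = 0$ holds, and $h(\a^\top\tilde\x) = h(s) \leq \tau$, giving $(\tau,\tilde\x,\z)\in\cT$. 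Since $\a^\top(\x - \tilde\x) = s - s = 0$, Lemma~\ref{lem:rec:T:connected} applied once more delivers $(\tau,\x,\z)\in\cl\conv(\cT)$.

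The key insight driving the argument is that the single complementarity $s(1-w)=0$ in $\overline\cT$, together with the linear constraint $w\leq\ones^\top\z$ built into $\Delta_1$, already encodes the entire logical content of the $d$ original complementarities $x_i(1-z_i)=0$ up to translations in the kernel of $\a^\top$; and those kernel translations are exactly the recessive directions supplied by Lemma~\ref{lem:rec:T:connected}. The only subtle point to verify is that when $w = 1$ an index $i$ with $z_i = 1$ must exist to build the surrogate $\tilde\x$, which is precisely what the linking constraint $w\leq\ones^\top\z$ in $\Delta_1$ guarantees; the rest of the verification amounts to checking the constraints of $\cT$ against the constructions, which is routine.
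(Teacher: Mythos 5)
Your proof is correct and takes essentially the same approach as the paper: the hard inclusion rests on Lemma~\ref{lem:rec:T:connected} together with the surrogate point $\tfrac{s}{a_i}\e_i$ supported on a coordinate with $z_i=1$, which is exactly the decomposition the paper uses. The only difference is organizational — the paper packages the argument as the exact set identity $\cT+\cR=\Proj_{\tau,\x,\z}(\overline\cT)$ via an intermediate set $\cT'$ before taking closed convex hulls, whereas you prove the two inclusions $\cT\subseteq\Proj_{\tau,\x,\z}(\overline\cT)\subseteq\cl\conv(\cT)$ directly.
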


\begin{proof}
	By Lemma~\ref{lem:rec:T:connected}, the set
	\begin{align*}
		\cR := \set{(\bar \tau, \bar \x, \bar \z) \in \R \times \R^d \times \R^d:~ \bar \tau = 0, ~ \bar \z = \bm 0, ~ \a^\top \bar \x = 0}
	\end{align*}
	is contained in $\rec(\cT)$. Thus, $\cl\conv(\cT) = \cl\conv(\cT + \cR)$.
	Define the set
	\begin{align*}
		\cT' := \set{(\tau, \x, \z)\in \R \times \R^d \times \cZ:~ h(\a^\top \x) \leq \tau, ~ \z = \bm 0 \implies \a^\top \x = 0}.
	\end{align*}
	We first prove that $\cT' = \cT + \cR$ by showing that $\cT + \cR \subseteq \cT'$ and $\cT + \cR \supseteq \cT'$.
	
	$\bm{(\subseteq)}$ This is immediate as $\cT \subseteq \cT'$ and by definition of $\cT'$ for any $\bar \x$ such that $\a^\top \bar \x = 0$ we have $(0, \bar \x, \bm 0)$ is a recessive direction in $\cT'$.
	
	$\bm{(\supseteq)}$ Consider any $(\tau, \x, \z) \in \cT'$. Then, $h(\a^\top \x) \leq \tau$ and $\z \in \cZ$.
	If $\z = \bm 0$, then as $(\tau, \x, \z) \in \cT'$ we have $\a^\top \x = 0$ which implies $h(\a^\top \x) = h(0) = 0 \leq \tau$. Then, $(\tau, \x, \z) = (\tau, \bm 0, \z) + (0, \x, \bm 0) \in \cT + \cR$ (as $(\tau, \bm 0, \z) \in \cT$ always holds and also $\a^\top \x = 0$ implies $(0, \x, \bm 0) \in \cR$). 
	Else, there exists $i\in[d]$ such that  $z_i = 1$. Define $\bar \x := \x - \frac{\a^\top \x}{a_i} \e_i$. Then,  $\a^\top \bar \x = 0$ implying $\a^\top (\x - \bar \x)= \a^\top \x$ and $h(\a^\top(\x - \bar \x)) = h(\a^\top \x) \leq \tau$. Moreover, since $\x - \bar \x = \frac{\a^\top \x}{a_i} \e_i$, we have $(x_i - \bar x_i) (1 - z_i) = 0$ satisfied for all $i \in [d]$. 
	Therefore, $(\tau, \x - \bar \x, \z) \in \cT$ from which we conclude that $(\tau, \x, \z) = (\tau, \x - \bar \x, \z) + (0, \bar \x, \bm 0) \in \cT + \cR$ (as $\a^\top \bar \x = 0$ implies $(0, \bar\x, \bm 0) \in \cR$).
	
	Thus, we showed that $\cT' = \cT + \cR$. 
	Now, note that $(\tau, \x, \z) \in \cT'$ if and only if there exists $w \in \set{0,1}$ such that $w \leq \ones^\top \z$ and $(\a^\top \x) (1 - w) = 0$. This easily holds because there exists $w \in \set{0, 1}$ satisfying $(\a^\top \x) (1 - w) = 0$ and $w \leq \ones^\top \z$ if and only if $\z = 0 \implies \a^\top \x = 0$.
	Therefore, $(\tau, \x, \z) \in \cT'$ if and only if there exists $w \in \set{0,1}$ and $s \in \R$ such that $(\tau, \x, s, \z, w) \in \overline \cT$.  Put it differently, we have $\cT' = \Proj_{\tau, \x, \z} (\overline \cT)$, which implies that
	\begin{align*}
		\cl\conv(\cT) 
		= \cl\conv(\cT + \cR) 
		= \cl\conv(\cT')
		= \cl\conv(\Proj_{\tau, \x, \z} (\overline \cT)).
	\end{align*}
	Hence, the claim follows.
	\qed
\end{proof}

Based on Lemma~\ref{lem:epi:perspective} and Proposition~\ref{prop:connected}, we are ready to prove Theorem~\ref{thm:connected}.

\begin{proof} \!\emph{of Theorem~\ref{thm:connected}~}
	Recall the definition of $\overline \cT$ and $\Delta_1$. By letting 
	\begin{align*}
		\gb_i := (s, \x), ~ \bm \gamma := \tau, ~ \gd := (w, \z), ~ \Delta := \Delta_1, ~ \bC_i = [-1, \a^\top], ~ \C_i = \set{0},
	\end{align*}
	we can represent the set $\overline \cT$ as an instance of the set $\cW$ defined as in~\eqref{eq:W}. Then, Proposition~\ref{prop:separable} yields
	\begin{align*}
		\cl\conv(\overline \cT) \!=\!
		\set{ (\tau, \x, \z, s, w) \!\in\! \R \!\times\! \R^d \!\times\! \R^d \!\times\! \R \!\times\! \R \!: 
		\begin{array}{l}
			h^\pi(s, w) \leq \tau, ~ \a^\top \x = s, \\ [1ex]
			(w, \z) \in \conv(\Delta_1)
		\end{array}
		}.
	\end{align*}
	In the following we characterize $\cl\conv(\cT)$ in terms of $\cl\conv(\overline \cT)$. From Proposition~\ref{prop:connected}, we deduce
	\begin{align*}
		\cl\conv(\cT) 
		= \cl \big(\conv(\Proj_{\tau, \x, \z} (\overline \cT))\big)
		= \cl \big(\Proj_{\tau, \x, \z}(\conv (\overline \cT))\big).
	\end{align*}
	By letting 
	\begin{align*}
		\begin{array}{c}
			\gm := (\tau, \x, \z), ~ \bm \eta := (\tau, \x, \z, s, w), ~ \cU := \conv(\overline \cT), \\ [1ex]
			\bA := [\bI_{2d+1}, ~ \bm 0], ~ \bB := \bm 0^\top, ~ \bm b := 0,
		\end{array}
	\end{align*}
	we observe that $\Proj_{\tau, \x, \z}(\conv (\overline \cT)) = \{ \gm : \exists \bm \eta \in \cU \text{ s.t. } \bA \bm \eta = \gm, ~ \bB \bm \eta = \bm b \}$ as in Lemma~\ref{lem:tools}\,\ref{lem:cl}. 
	Note also that the first requirement of Lemma~\ref{lem:tools}\,\ref{lem:cl} is trivially satisfied as the matrix $\bB$ equals zero. In addition, $\bm \eta \in \cl(\cU)$ satisfies $\bA \bm \eta = \bm 0$ if $\bm \eta \in \{(0, \bm 0, \bm 0, s, w) : s = \a^\top\x=0, (w, \bm 0) \in \conv(\Delta_1) \} = \set{\bm 0}$, where the equality holds because, by definition of $\Delta_1$ we have $0 \leq w \leq \ones^\top \z$ which enforces $w=0$ as $\z=\bm 0$. 
	Thus, we can apply Lemma~\ref{lem:tools}\,\ref{lem:cl} to conclude that
	\begin{align}
		\label{eq:clconv:T}
		\cl\conv(\cT) = \{ (\tau, \x, \z): \exists s, w \text{ s.t. } (\tau, \x, \z, s, w) \in \cl\conv(\overline \cT) \}.
	\end{align}  
	This completes the proof.
	\qed
\end{proof}

It is important to note that the description of $\conv(\Delta_1)$ may not be readily available from $\conv(\cZ)$ even if $\cZ$ is an integral or a totally unimodular set. 
\begin{example}
Consider the set $\cZ = \{ \z \in \{0,1\}^2 : z_1 = z_2 \}$, which is integral and totally unimodular. By definition, the resulting $\Delta_1$ is given by $\Delta_1 = \set{ (w,\z) \in \{0, 1\}^{3} : w \leq z_1+z_2,~ z_1 = z_2}$. Furthermore, $\conv(\Delta_1)=\{(w,\z)\in[0,1]^{3}: w \leq z_1,~ z_1 = z_2\}$. We thus observe that the continuous relaxation of $\Delta_1$ and its convex hull $\conv(\Delta_1)$ are different.
For example, the point $z_1 = z_2 = 0.5$ and $w = 1$ is in the continuous relaxation of $\Delta_1$ but it is not in $\conv(\Delta_1)$. 
\qed
\end{example}

In the sequel we examine the description of $\conv(\Delta_1)$ for some simple integral sets $\cZ$ of interest. The proofs of these results are provided in Appendix~\ref{appendix:A}.
We start with the case when $\cZ$ is defined by a cardinality constraint, which leads to an immediate totally unimodular representation of $\Delta_1$.

\begin{lemma}
	\label{lem:conv:Z:cardinality}
	Suppose $\cZ=\{ \z \in \{0,1\}^d :~\ones^\top \z \leq \kappa \}$ for some $\kappa \in [d]$. Then 
	\begin{align*}
		\conv(\Delta_1) = \set{(w,\z)\in[0,1]^{1+d}:~w\leq \ones^\top \z, ~\ones^\top \z \leq \kappa}.
	\end{align*}
\end{lemma}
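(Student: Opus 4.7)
The plan is to recognize the right-hand side set as the LP relaxation of $\Delta_1$ and to establish its integrality via a total unimodularity argument on its constraint matrix. Let $P := \{(w,\z) \in [0,1]^{1+d} : w \leq \ones^\top \z,\ \ones^\top \z \leq \kappa\}$. The inclusion $\conv(\Delta_1) \subseteq P$ is immediate, since $\Delta_1 \subseteq P$ and $P$ is convex. The nontrivial direction is $P \subseteq \conv(\Delta_1)$, and I would prove it by showing every vertex of $P$ has coordinates in $\{0,1\}$; such a vertex would then automatically belong to $\Delta_1$.

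To establish integrality, since $\kappa \in [d]$ and all box right-hand sides are integer, it suffices to verify that the constraint matrix $A$ of $P$ is totally unimodular (TU). The matrix $A$ has two ``structural'' rows (from $w - \ones^\top \z \leq 0$ and $\ones^\top \z \leq \kappa$) together with rows of the form $\pm \e_j^\top$ arising from the box constraints $0 \leq w, z_i \leq 1$. Because appending rows with a single $\pm 1$ entry to a TU matrix preserves TU, the task reduces to checking TU of the $2 \times (1+d)$ block
\begin{align*}
B = \begin{pmatrix} 1 & -\ones^\top \\ 0 & \ones^\top \end{pmatrix}.
\end{align*}
All $1 \times 1$ minors of $B$ lie in $\{-1,0,1\}$. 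For the $2 \times 2$ minors, picking two $z$-columns yields $\begin{smallpmatrix} -1 & -1 \\ 1 & 1 \end{smallpmatrix}$ with determinant $0$, while picking the $w$-column together with any $z$-column yields $\begin{smallpmatrix} 1 & -1 \\ 0 & 1 \end{smallpmatrix}$ with determinant $+1$. Hence $B$, and therefore $A$, is TU.

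With $A$ TU and the right-hand-side vector integer, $P$ is an integral polytope. Combined with $P \subseteq [0,1]^{1+d}$, every vertex of $P$ lies in $\{0,1\}^{1+d}$ and thus in $\Delta_1$, giving $P = \conv(\text{vert}(P)) \subseteq \conv(\Delta_1)$ and completing the plan. The only real obstacle is the TU verification; once reduced to the $2 \times (1+d)$ block $B$, however, this becomes a short determinant inspection, so no serious difficulty remains.
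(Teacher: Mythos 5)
Your proposal is correct and follows essentially the same route as the paper: both identify the right-hand side as the continuous relaxation of $\Delta_1$ and establish integrality by showing the constraint matrix (the two structural rows augmented with $\pm$ unit rows for the box constraints) is totally unimodular. Your explicit check of the $2\times 2$ minors of the structural block is simply a more detailed verification of the TU claim the paper asserts directly.
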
 
We next study the case of weak hierarchy constraints; in this case, $\Delta_1$ also admits a totally unimodular representation. 

\begin{lemma}
	\label{lem:conv:Z:weak:hierarchy}
	Suppose $\cZ=\{\z\in\{0,1\}^d:~z_d\leq\sum_{i\in[d-1]} z_i\}$. Then
	\begin{align*}
		\textstyle
		\conv(\Delta_1)= \set{(w,\z) \in[0,1]^{1+d} :~w \leq \sum_{i\in[d-1]} z_i, ~z_d \leq \sum_{i\in[d-1]} z_i}.
	\end{align*}
\end{lemma}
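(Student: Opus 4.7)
The plan is to identify the polytope
\[
P\;:=\;\bigl\{(w,\z)\in[0,1]^{1+d}:\; w\leq \textstyle\sum_{i\in[d-1]}z_i,\; z_d\leq \textstyle\sum_{i\in[d-1]}z_i\bigr\}
\]
on the right-hand side with $\conv(\Delta_1)$, via a total-unimodularity argument.

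First, I would reformulate $\Delta_1$ so that its constraints match those of $P$. Over the binary points satisfying $z_d\leq\sum_{i\in[d-1]}z_i$, the requirement $w\leq \ones^\top\z$ is equivalent to $w\leq \sum_{i\in[d-1]}z_i$: if $\sum_{i\in[d-1]}z_i=0$ then $z_d=0$ forces $w=0$, while if $\sum_{i\in[d-1]}z_i\geq 1$ then $w\leq 1\leq \sum_{i\in[d-1]}z_i$. Consequently $\Delta_1=P\cap\Z^{1+d}$, which immediately yields $\conv(\Delta_1)\subseteq P$.

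For the reverse inclusion, since $P$ is a bounded polytope it suffices to show that every vertex of $P$ is integer-valued. The cleanest route is via total unimodularity of the constraint matrix. Apart from the box rows $\pm\e_i^\top$ (which preserve TU when appended, by cofactor expansion along any such row), only two non-trivial rows appear, namely $\bm r_1=(1,-\ones_{d-1}^\top,0)$ and $\bm r_2=(0,-\ones_{d-1}^\top,1)$. Ghouila-Houri's criterion for the two-row matrix $\{\bm r_1,\bm r_2\}$ is verified by the signing $\epsilon_1=+1,\,\epsilon_2=-1$, which gives $\bm r_1-\bm r_2=(1,\bm 0_{d-1}^\top,-1)\in\{-1,0,1\}^{1+d}$; singleton row subsets are trivial. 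Combined with integrality of the right-hand side, this forces $P$ to have integer vertices, each of which lies in $\Delta_1$, giving $P\subseteq\conv(\Delta_1)$.

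The main subtlety is the first step: the equivalent description of $\Delta_1$ using $w\leq \sum_{i\in[d-1]}z_i$ rather than $w\leq \ones^\top\z$ is what makes both non-box rows of the constraint matrix share the identical $-\ones^\top$ block on the $z_1,\dots,z_{d-1}$ coordinates, which is precisely what permits the Ghouila-Houri signing above. Once this reformulation is in place, the remainder of the proof is routine.
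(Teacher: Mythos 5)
Your proposal is correct and takes essentially the same route as the paper: both first rewrite the binary constraint $w\leq\ones^\top\z$ as $w\leq\sum_{i\in[d-1]}z_i$ over $\cZ$, and then observe that the resulting constraint matrix with rows $(1,-\ones_{d-1}^\top,0)$ and $(0,-\ones_{d-1}^\top,1)$ (together with the box constraints) is totally unimodular, so the integral polytope equals the continuous relaxation. The only difference is cosmetic: you certify total unimodularity explicitly via Ghouila--Houri, whereas the paper simply asserts that all square submatrices have determinant $0,\pm 1$.
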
 
For general $\cZ$, in the same spirit of \citep{wei2022ideal}, we can take advantage of the fact that the set $\conv(\cZ \backslash \set{\bm 0})$ is a polytope to give an explicit description of $\conv(\Delta_1)$ based on the description of $\conv(\cZ \backslash \set{\bm 0})$. In particular, suppose that $\conv(\cZ \backslash \set{\bm 0})$ admits the following representation
\begin{align}
	\label{eq:Z:0}
	\conv(\cZ\backslash\set{\bm 0}) = \set{ \z \in \R^d : \bF^0 \z \geq \bm 0, ~ \begin{array}{l} \z^\top \bm f^+_k \geq 1, ~ \forall k \in \cK, \\ [1ex] \z^\top \bm f_l^- \leq 1, ~ \forall l \in \cL
	\end{array} }.
\end{align}
Note that the representation of $\conv(\cZ\backslash\set{\bm 0})$ in \eqref{eq:Z:0} is without loss of generality as we can always scale each inequality to have right hand side value in $\{-1,0,1\}$.
\begin{lemma}
	\label{lem:conv:Delta:1}
	Given the representation $\conv(\cZ\backslash\set{\bm 0})$  in~\eqref{eq:Z:0}, we have
	\begin{align*}
		\conv(\Delta_1)= \set{ (w, \z) \in \R^{1+d} : \bF^0 \z \geq \bm 0, ~ \begin{array}{l} \z^\top \bm f^+_k \geq w, ~ \forall k \in \cK, \\ [1ex] \z^\top \bm f_l^- \leq 1, ~ \forall l \in \cL, \\ [1ex] \z^\top \bm f_l^- \leq \z^\top \bm f^+_k, ~ \forall k \in \cK, \forall l \in \cL
		\end{array} }.
	\end{align*}
\end{lemma}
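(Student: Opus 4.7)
The plan is to prove the lemma via a direct disjunctive argument. The key observation is that $\Delta_1$ decomposes as $\Delta_1 = (\set{0} \times \cZ) \cup (\set{1} \times (\cZ \backslash \set{\bm 0}))$, immediate from the constraint $w \leq \ones^\top \z$ with $w \in \set{0,1}$ and $\cZ \subseteq \set{0,1}^d$. Hence
\[
\conv(\Delta_1) = \conv\bigl((\set{0} \times \conv(\cZ)) \cup (\set{1} \times \conv(\cZ \backslash \set{\bm 0}))\bigr),
\]
and the set $S$ on the right-hand side of the lemma should arise as the projection of the Balas disjunctive extended formulation onto $(w, \z)$-space.

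For the inclusion $\conv(\Delta_1) \subseteq S$, I would verify the validity of each defining inequality of $S$ on the two disjuncts. The inequalities $\bF^0 \z \geq \bm 0$ and $\z^\top \bm f^-_l \leq 1$ are directly valid on $\cZ$ by~\eqref{eq:Z:0}. The scaled inequality $\z^\top \bm f^+_k \geq w$ reduces to $\z^\top \bm f^+_k \geq 0$ at $w = 0$ (valid since $\z^\top \bm f^+_k \geq 1$ on $\cZ \backslash \set{\bm 0}$ and $=0$ at $\bm 0$) and to $\z^\top \bm f^+_k \geq 1$ at $w = 1$. The linking inequality $\z^\top \bm f^-_l \leq \z^\top \bm f^+_k$ is trivial at $\bm 0$ and follows from $\z^\top \bm f^-_l \leq 1 \leq \z^\top \bm f^+_k$ on $\cZ \backslash \set{\bm 0}$.

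For the reverse inclusion $S \subseteq \conv(\Delta_1)$, I would give an explicit convex decomposition. Given $(w, \z) \in S$ with $w \in (0, 1)$, set $\theta := \max\set{w, \max_{l \in \cL} \z^\top \bm f^-_l}$. The constraints of $S$ force $\theta \in [w,\, \min\set{1, \min_{k \in \cK} \z^\top \bm f^+_k}]$, and in particular $\theta > 0$. Then $\z^1 := \z/\theta$ satisfies~\eqref{eq:Z:0} by direct verification, so $\z^1 \in \conv(\cZ \backslash \set{\bm 0})$; setting $\mu := (\theta - w)/(1 - w) \in [0, 1]$ and $\z^0 := \mu \z^1 \in \conv(\cZ)$ (a scaling of a point in $\conv(\cZ \backslash \set{\bm 0})$ by a factor in $[0, 1]$), the identity $(w, \z) = (1-w)(0, \z^0) + w(1, \z^1)$ exhibits $(w,\z)$ as a convex combination of points in $\set{0} \times \conv(\cZ)$ and $\set{1} \times \conv(\cZ \backslash \set{\bm 0})$. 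The boundary case $w = 1$ reduces to the observation that the linking inequality is redundant there (giving $\z \in \conv(\cZ \backslash \set{\bm 0})$ from~\eqref{eq:Z:0}), while the boundary case $w = 0$ follows from the same scaling with $\mu = \theta$.

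The main technical obstacle is the degenerate case $\theta = 0$, which can occur only when $w = 0$ and $\max_l \z^\top \bm f^-_l \leq 0$. In this situation $\z$ satisfies $\bF^0 \z \geq \bm 0$, $\z^\top \bm f^+_k \geq 0$, and $\z^\top \bm f^-_l \leq 0$, which is exactly the description of the recession cone of the bounded polytope $\conv(\cZ \backslash \set{\bm 0})$; boundedness forces $\z = \bm 0$, so $(w, \z) = (0, \bm 0) \in \Delta_1$ trivially. A secondary subtlety is the implicit bound $w \in [0, 1]$ required for the decomposition to make sense; this is inherent to $\Delta_1$ since $w \in \set{0, 1}$ there, and is also enforced in the application of the lemma in Theorem~\ref{thm:connected} via the domain of $h^\pi$.
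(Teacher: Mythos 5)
Your argument is correct and rests on the same disjunctive idea as the paper's, but executes it along a different route. The paper decomposes $\Delta_1 = \left(\set{0}\times\set{\bm 0}\right)\cup\left(\set{0,1}\times(\cZ\backslash\set{\bm 0})\right)$, so that $\conv(\Delta_1)$ consists of the points $\lambda\,(w',\z')$ with $\lambda\in[0,1]$, $w'\in[0,1]$, $\z'\in\conv(\cZ\backslash\set{\bm 0})$; it then writes this with a single auxiliary scalar $\lambda$ and eliminates it by Fourier--Motzkin, the linking inequalities $\z^\top\bm f^-_l\leq\z^\top\bm f^+_k$ arising exactly as the elimination products. You instead use the decomposition $\left(\set{0}\times\cZ\right)\cup\left(\set{1}\times(\cZ\backslash\set{\bm 0})\right)$ and, rather than invoking Balas plus Fourier--Motzkin, construct the eliminated multiplier explicitly as $\theta=\max\set{w,\max_{l\in\cL}\z^\top\bm f^-_l}$ and exhibit the two-point convex combination by hand; your $\theta$ is precisely a feasible value of the paper's $\lambda$. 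The trade-off is that the paper's version is shorter and the degenerate case never surfaces (Fourier--Motzkin handles it silently), whereas your explicit witness makes the geometry transparent but obliges you to dispose of $\theta=0$ via the recession cone of the nonempty bounded polytope $\conv(\cZ\backslash\set{\bm 0})$, which you do correctly. Two caveats are shared by both treatments rather than being gaps in yours: (i) each implicitly uses $\bm 0\in\cZ$ --- the paper in asserting $\set{0}\times\set{\bm 0}\subseteq\Delta_1$, you in asserting $\mu\z^1\in\conv(\cZ)$ for $\mu\in[0,1]$ and $(0,\bm 0)\in\Delta_1$ --- which holds for every instance of $\cZ$ considered in the paper but is not recorded in Assumption~\ref{asmp:connected}; and (ii) as you observe, the displayed description omits $0\leq w\leq 1$, constraints that the paper's own elimination in fact produces.
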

We conclude this section by examining the case of strong hierarchy constraints. Unlike the previous two cases, the set $\Delta_1$ does not immediately admit a totally unimodular representation. Therefore, we utilize Lemma~\ref{lem:conv:Delta:1} to characterize $\conv(\Delta_1)$ for strong hierarchy constraints.

\begin{lemma}
	\label{lem:conv:Z:strong:hierarchy}
	Suppose $\cZ = \{\z \in \{0,1\}^d:~z_d \leq z_i,\,\forall i\in[d-1] \}$. Then,
	\begin{align*}
		\conv(\Delta_1) = \set{(w,\z) \in [0,1]^{1+d}:~
		\begin{array}{l}
		w \leq \sum_{i \in [d-1]} z_i - (d-2) z_d, \\[1ex]
		z_d \leq z_i, ~ \forall i\in[d-1]
		\end{array}}.
	\end{align*}
\end{lemma}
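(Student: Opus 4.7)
The plan is to invoke Lemma~\ref{lem:conv:Delta:1}, so the main work reduces to (i) writing $\conv(\cZ \setminus \set{\bm 0})$ in the form~\eqref{eq:Z:0} and (ii) simplifying the description of $\conv(\Delta_1)$ that the lemma then produces.

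My claim for step (i) is
\[
\conv(\cZ \setminus \set{\bm 0}) = \set{\z \in \R^d : z_d \geq 0,~ z_i - z_d \geq 0 ~\forall i \in [d-1],~ \textstyle\sum_{i \in [d-1]} z_i - (d-2) z_d \geq 1,~ z_l \leq 1 ~\forall l \in [d-1]},
\]
matching \eqref{eq:Z:0} with $\bF^0$ gathering $z_d \geq 0$ and $z_i - z_d \geq 0$, a single $\bm f^+_1$ with first $d-1$ entries equal to $1$ and last entry $-(d-2)$, and $\bm f^-_l = \e_l$ for $l \in [d-1]$. The $\subseteq$ direction is immediate: any $\z \in \cZ \setminus \set{\bm 0}$ is either $\ones$ (giving equality in the $\bm f^+_1$-inequality) or satisfies $z_d = 0$ with $\z_{[d-1]} \neq \bm 0$, in which case $\sum_{i \in [d-1]} z_i \geq 1$ trivially. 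For the $\supseteq$ direction, given a point $\z$ in the right-hand side with $z_d < 1$, I would apply the decomposition
\[
\z = z_d \, \ones + (1 - z_d)\, \z', \qquad \z'_i := \frac{z_i - z_d}{1 - z_d} ~~\forall i \in [d-1],~~ \z'_d := 0.
\]
One checks $\z'_{[d-1]} \in [0,1]^{d-1}$ via $z_d \leq z_i \leq 1$, and $\sum_{i \in [d-1]} \z'_i \geq 1$ precisely because $\sum z_i - (d-2) z_d \geq 1$. Integrality of the truncated cube $\set{\z'' \in [0,1]^{d-1} : \ones^\top \z'' \geq 1}$ writes $\z'_{[d-1]}$ as a convex combination of nonzero binary vectors, placing $\z'$ in $\conv(\cZ \setminus \set{\bm 0})$ via its $\set{z_d = 0}$ slice; combining with $\ones \in \cZ \setminus \set{\bm 0}$ yields $\z \in \conv(\cZ \setminus \set{\bm 0})$. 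The corner case $z_d = 1$ forces $\z = \ones$ from $z_d \leq z_i \leq 1$.

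For step (ii), Lemma~\ref{lem:conv:Delta:1} with the parameters above outputs the constraints $z_d \geq 0$, $z_i \geq z_d$ ($i \in [d-1]$), $\sum_{i \in [d-1]} z_i - (d-2) z_d \geq w$, $z_l \leq 1$ ($l \in [d-1]$), and the side constraints $z_l \leq \sum_{i \in [d-1]} z_i - (d-2) z_d$ for $l \in [d-1]$. The side constraints are redundant since, for any $l \in [d-1]$,
\[
\textstyle \sum_{i \in [d-1]} z_i - (d-2) z_d - z_l = \sum_{i \in [d-1],\, i \neq l} (z_i - z_d) \geq 0
\]
by $z_i \geq z_d$. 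The bounds $(w, \z) \in [0,1]^{1+d}$ then follow from $\Delta_1 \subseteq \set{0,1}^{1+d}$ (so $\conv(\Delta_1) \subseteq [0,1]^{1+d}$), which in turn absorbs the implied constraints $z_i \geq 0$ for $i \in [d-1]$ (from $z_i \geq z_d \geq 0$) and $z_d \leq 1$ (from $z_d \leq z_l \leq 1$). The surviving system is precisely the claimed description.

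The main obstacle I expect is producing the correct lifted inequality $\sum_{i \in [d-1]} z_i - (d-2) z_d \geq 1$ in step (i). The naive choice $\ones^\top \z \geq 1$ is not tight: it is strictly weaker and fails to be facet-defining, because both the vertex $\ones$ and the simplex vertices $\e_i$ ($i \in [d-1]$) must lie on a single supporting hyperplane of $\conv(\cZ \setminus \set{\bm 0})$, and the coefficient $-(d-2)$ on $z_d$ is exactly what aligns them. Once this inequality is identified, the decomposition $\z = z_d \ones + (1-z_d) \z'$ turns the reverse inclusion into a routine integrality argument on a truncated cube, and the rest of the proof is algebraic bookkeeping.
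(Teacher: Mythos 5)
Your proposal is correct and follows the same top-level route as the paper: obtain an inequality description of $\conv(\cZ\backslash\set{\bm 0})$ of the form~\eqref{eq:Z:0} and then invoke Lemma~\ref{lem:conv:Delta:1}. The difference is in how the first ingredient is obtained. The paper simply cites an external result (\citealp[Lemma~3]{wei2022ideal}) for the identity $\conv(\cZ\backslash\set{\bm 0}) = \{ \z \in [0,1]^d: 1 \leq \sum_{i \in [d-1]} z_i - (d-2) z_d,\ z_d \leq z_i\ \forall i\in[d-1]\}$ and then states that Lemma~\ref{lem:conv:Delta:1} finishes the job, leaving the resulting simplification implicit. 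You instead prove that identity from scratch via the decomposition $\z = z_d \ones + (1-z_d)\z'$ with $\z'_d = 0$, reducing the reverse inclusion to the integrality of the truncated cube $\set{\z''\in[0,1]^{d-1}:\ones^\top\z''\geq 1}$; this argument is correct (the arithmetic $\sum_{i\in[d-1]}\z'_i\geq 1 \Leftrightarrow \sum_i z_i-(d-2)z_d\geq 1$ checks out, as does the corner case $z_d=1$), and it makes the lemma self-contained. You also explicitly verify that the cross-constraints $\z^\top\bm f_l^-\leq\z^\top\bm f_1^+$ produced by Lemma~\ref{lem:conv:Delta:1} are implied by $z_i\geq z_d$, which the paper does not spell out but which is needed to land exactly on the stated description. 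In short: same skeleton, but your version supplies an elementary proof of the imported polytope description and the omitted bookkeeping, at the cost of being longer.
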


\subsubsection{General graph} 
\label{sec:applications:rank1:general}
We now examine a general graph $G_{\cZ}$ partitioned into $p \in [d]$ connected subgraphs. Without loss of generality, we assume that the subvector $\z_i$ is associated with the variables in the $i$th partition.
Such indexing allows us to simplify the evaluation of the rank-1 function as
\begin{align}
	\label{eq:indexing}
	h(\a^\top \x) = \sum_{i \in [p]} h(\a_i^\top \x_i)
\end{align}
because it is not possible to have two indices $j, k \in [d]$ with $z_j = z_k = 1$ from two different subgraphs. Our result relies on the following assumption.

\begin{assumption}
	\label{asmp:general}
	The set $\cX = \R^d$. The set $\cZ \subseteq \set{0,1}^d$. The graph $G_{\cZ}$ associated with $\cZ$ is partitioned into $p \in [d]$ connected subgraphs, while the subvector $\z_i$ corresponds to the variables in the $i$th partition. For any partition $i \in [p]$, there exists a subvector $\z_i \neq \bm 0$. The vector $\a \in \R^d$ satisfies $a_i \neq 0$ for all $i \in [d]$. The function $h: \R \to \overline \R$ is proper, lower semicontinuous, and convex with $h(0) = 0$.
\end{assumption} 

\begin{theorem}
	\label{thm:general}
	Under Assumption~\ref{asmp:general}, the set $\cT$ defined in~\eqref{eq:T} satisfies 
	\begin{align*}
		\cl\conv(\cT) \!=\! \set{ (\tau, \x, \z) \!\in\! \R \!\times\! \R^d \!\times\! \R^d \!:\! \exists \w \!\in\! \R^p \text{\,s.t.\,}
			\begin{array}{l}
				\sum_{i \in [p]} h^\pi(\a_i^\top \x_i, w_i) \!\leq\! \tau \\ [1ex]
				(\w, \z) \in \conv(\Delta_p)
			\end{array} 
		},
	\end{align*}
	where $\Delta_p = \set{(\w, \z) \in \set{0,1}^p \times \cZ: w_i \leq \ones^\top \z_i, ~ \forall i \in [p]}$.
\end{theorem}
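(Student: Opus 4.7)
I would mirror the three-step strategy used for Theorem~\ref{thm:connected} (recessive direction lemma, auxiliary lifted set, conic reformulation via $\cW$), carried out partition-by-partition rather than globally. The essential structural observation is that, since edges in $G_{\cZ}$ are defined by pairs $\{i,j\}$ covered by some $\z\in\cZ$, every element of $\cZ$ has its support entirely inside a single partition. Combined with the complementarity $x_j(1-z_j)=0$, this yields the separable identity $h(\a^\top\x)=\sum_{i\in[p]}h(\a_i^\top\x_i)$ on the feasible region of $\cT$ (at most one summand is nonzero, and $h(0)=0$).

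The plan is to first prove the analog of Lemma~\ref{lem:rec:T:connected}: the cone $\cR := \{(0,\bar\x,\bm 0) : \a_i^\top\bar\x_i=0~\forall i\in[p]\}$ is contained in $\rec(\cl\conv(\cT))$. Since each subgraph is connected and, by Assumption~\ref{asmp:general}, admits some $\z\in\cZ$ with $\z_i\neq \bm 0$, the path-construction argument from Lemma~\ref{lem:rec:T:connected} can be applied \emph{within} partition $i$ to show that adding $(0,\bar\x^{(i)},\bm 0)$, supported in partition $i$ with $\a_i^\top\bar\x^{(i)}_i=0$, is recessive; summing over $i$ gives all of $\cR$.

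Next, introduce the lifted set
\begin{align*}
\overline\cT := \biggl\{(\tau,\x,\z,\s,\w,\bm\tau):
\begin{array}{l}
h(s_i)\leq \tau_i,~ \a_i^\top\x_i=s_i,~ s_i(1-w_i)=0, ~\forall i\in[p], \\[0.5ex]
\textstyle\sum_{i\in[p]}\tau_i=\tau,~ (\w,\z)\in\Delta_p
\end{array} \biggr\},
\end{align*}
and prove an analog of Proposition~\ref{prop:connected}: $\cl\conv(\cT)=\cl\conv(\Proj_{\tau,\x,\z}(\overline\cT))$. For this, identify $\Proj_{\tau,\x,\z}(\overline\cT)$ with the set $\cT'$ obtained from $\cT$ by replacing the individual complementarity $x_j(1-z_j)=0$ by the weaker partition-wise logical implication $\z_i=\bm 0 \Rightarrow \a_i^\top\x_i=0$; the binary $w_i$ together with $w_i\leq \ones^\top\z_i$ from $\Delta_p$ encodes exactly this. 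Then show $\cT'=\cT+\cR$ by a direct rounding argument: if $\z$ has support in partition $i^*$ (which is unique by the partition property), pick $j\in\mathrm{supp}(\z_{i^*})$ and replace $\x_{i^*}$ by the single-coordinate vector $\frac{\a_{i^*}^\top\x_{i^*}}{a_j}\e_j$, preserving $\a^\top\x$; the residual lies in $\cR$ because $\a_i^\top\x_i=0$ whenever $\z_i=\bm 0$.

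Finally, fit $\overline\cT$ into the framework of Section~\ref{sec:Preparatory}. With $\gb_i=(s_i,\x_i)$, $\gamma_i=\tau_i$, $\delta_i=w_i$, $\Delta=\Delta_p$, $\bC_i\gb_i=-s_i+\a_i^\top\x_i$ and $\C_i=\{0\}$, the system other than the linking constraint $\sum_i\tau_i=\tau$ is an instance of the set $\cW$ from \eqref{eq:W}; Proposition~\ref{prop:separable} yields $\cl\conv$ with $h(s_i)\leq\tau_i$ replaced by $h^\pi(s_i,w_i)\leq\tau_i$ and $\Delta_p$ replaced by $\conv(\Delta_p)$. Two applications of Lemma~\ref{lem:tools} finish the argument: part~\ref{lem:separable} incorporates the linking $\sum_i\tau_i=\tau$, and part~\ref{lem:cl} (combined with the recessive-direction calculus above) legitimates passing the closure through the projection used to eliminate $\s$ (via $s_i=\a_i^\top\x_i$) and $\bm\tau$ (via Fourier--Motzkin on $\tau_i\geq h^\pi(\a_i^\top\x_i,w_i)$, $\sum_i\tau_i=\tau$), giving the displayed formula. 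The main obstacle is verifying $\cT'=\cT+\cR$ cleanly in the general case: one has to guarantee that the unique partition containing $\supp(\z)$ is correctly exploited and that the per-partition $w_i$'s (rather than a single $w$ as in Theorem~\ref{thm:connected}) deliver precisely the logical constraint needed for the recessive-direction reduction.
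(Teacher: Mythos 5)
Your proposal follows essentially the same route as the paper: the same recessive-direction cone $\cR$ established partition-by-partition (the paper's Lemma~\ref{lem:rec:T:general}), the same lifted set with per-partition $s_i, w_i, t_i$ and the single complementarity $s_i(1-w_i)=0$ per block (the paper's $\widetilde\cT$ and Proposition~\ref{prop:general}), and the same final reduction to $\cW$ with $\gb_i=(s_i,\x_i)$, $\bC_i=[-1,\a_i^\top]$, $\C_i=\set{0}$ followed by Proposition~\ref{prop:separable}, both parts of Lemma~\ref{lem:tools}, and Fourier--Motzkin elimination of $\t$. The argument is correct and matches the paper's proof in structure and in all key steps.
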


\citet[Thereom~2]{wei2022ideal} give an extended formulation for $\conv(\cT)$ that involves $2p(d+1)$ additional variables. Their formulation relies on having a description of $\conv(\cZ_i)$, where $ \cZ_i := \set{\z \in \cZ: \z_j = \bm 0, ~\forall j \neq i}$, as the system of linear inequalities $\bC_i \z \leq \bm c_i$ for every $i \in [p]$. In particular, they suppose that for every $i\in[p]$ we have access to a finite set $\cF_i \subset \R^d$ satisfying 
\begin{align*}
	\conv(\cZ_i \backslash \set{0}) = \conv(\cZ_i) \cap \set{\z \in \set{0,1}^d:~ \bm \gamma_i^\top \z \geq 1, ~ \forall \bm \gamma_i \in \cF_i},
\end{align*}
and based on this \citet[Theorem~2]{wei2022ideal} provide $\cl\conv(\cT)$ as the set
\begin{align*}
	\Big\{ &(\tau, \x, \z) \in \R \times \R^d \times [0,1]^d : \exists \tilde \x \in \R^{dp}, \tilde \z \in [0,1]^{dp}, \t \in \R^p, \w \in [0,1]^p \text{~s.t.~} \\
	& \textstyle \sum_{i \in [p]} \tilde \x^i = \x, \, \sum_{i \in [p]} \tilde \z^i = \z, \, \ones^\top \t = \tau, \, \ones^\top \w = 1, \, \bC_i \tilde \z^i \leq \bm c_i w_i, ~\forall i \in [p] \\
	& h^\pi(\a^\top \tilde \x^i, w_i) \leq t_i, \, h^\pi(\a^\top \tilde \x^i, \bm \gamma_i^\top \z) \leq \tau, ~ \forall \bm \gamma_i \in \cF_i, ~\forall i \in [p] 
	\Big\}.
\end{align*}
In contrast, Theorem~\ref{thm:general} provides a compact extended formulation for $\cl\conv(\cT)$ by introducing a single binary vector in $\w \in \R^p$ and more importantly replacing the complexity of having explicit descriptions of convex hulls of several sets such as $\conv(\cZ_i)$ and $\conv(\cZ_i \backslash \set{0})$ with the complexity of a single set $\Delta_p$ that is obtained from $\cZ$ by adding $p$ additional binary variables and $p$ additional linear constraints.

Inspired by the set $\cT$, we introduce the mixed-binary set 
\begin{align}
	\label{eq:T:tilde}
	\widetilde \cT := \set{(\x, \z, \s, \w, \t) \!\in\! \R^{d+d+p+p+p} : 
		\begin{array}{l}
			h(s_i) \leq t_i, ~ \a_i^\top \x_i = s_i, ~ \forall i \in [p] \\ [1ex]
			s_i (1 - w_i) = 0, ~ \forall i \in [p] \\ [1ex]
			(\w, \z) \in \Delta_p 
		\end{array}
	}.
\end{align}
Theorem~\ref{thm:general} relies on the following auxiliary results, whose proofs are omitted for brevity because they follow the same path as those in Section~\ref{sec:applications:rank1:connected}. Additionally, the proof of Theorem~\ref{thm:general} is relegated to Appendix~\ref{appendix:A}.
 
\begin{lemma}
	\label{lem:rec:T:general}
	Under Assumption~\ref{asmp:general}, if $(\tau, \x, \z) \!\in\! \cl\conv (\cT)$, then for any $\bar \x \!\in\! \R^n$ satisfying $\a_i^\top \bar \x_i = 0$ for all $i \in [p]$, we have $(\tau, \x + \bar \x, \z) \in \cl\conv(\cT)$.
\end{lemma}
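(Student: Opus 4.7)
The strategy is to reduce the statement to the connected case already treated in Lemma~\ref{lem:rec:T:connected} by working inside each connected subgraph separately. First I would decompose $\bar \x = \sum_{i \in [p]} \bar \x^{(i)}$, where $\bar \x^{(i)} \in \R^d$ agrees with $\bar \x_i$ on the coordinates of partition $i$ and is zero elsewhere. The crucial structural features supplied by Assumption~\ref{asmp:general} are that the rank-one function splits as in~\eqref{eq:indexing} and that no $\z \in \cZ$ has simultaneous $1$'s across two different partitions, so every combinatorial witness needed for the proof can be found inside a single subgraph.

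Next, fix a partition $i \in [p]$. Since its subgraph is connected, a path $i^i_1, \dots, i^i_{k_i+1}$ visiting all its nodes exists, and for each edge the defining property of $G_{\cZ}$ supplies some $\z^{i,l} \in \cZ$ with $z^{i,l}_{i^i_l} = z^{i,l}_{i^i_{l+1}} = 1$. I would then mirror the explicit two-entry construction from the proof of Lemma~\ref{lem:rec:T:connected}, setting
\begin{align*}
\x^{i,l} := \frac{\sum_{j \in \cM^i_l} a_j \bar x_j}{a_{i^i_l}} \e_{i^i_l} - \frac{\sum_{j \in \cM^i_l} a_j \bar x_j}{a_{i^i_{l+1}}} \e_{i^i_{l+1}},
\end{align*}
where $\cM^i_l$ collects the partition-$i$ nodes visited during the first $l-1$ steps. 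A direct computation confirms that $\a^\top \x^{i,l} = 0$ (the two terms cancel by construction) and that the support of $\x^{i,l}$ is contained in $\{i^i_l, i^i_{l+1}\}$ and hence covered by $\z^{i,l}$. The telescoping identity $\sum_{l \in [k_i]} \x^{i,l} = \bar \x^{(i)}$ then goes through exactly as in the connected case, with the terminal boundary term $\sum_{j \in \cM^i_{k_i+1}} a_j \bar x_j = \a_i^\top \bar \x_i$ vanishing precisely by the hypothesis $\a_i^\top \bar \x_i = 0$.

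Finally, for each $i \in [p]$, $l \in [k_i]$, and $\lambda > 0$ the point $(0, \lambda \x^{i,l}, \z^{i,l})$ lies in $\cT$ because $h(\a^\top(\lambda \x^{i,l})) = h(0) = 0$ and the complementarity relation holds on the covered support. Writing $K := \sum_{i \in [p]} k_i$ and forming the convex combination
\begin{align*}
(\tau, \x + \bar \x, \z) = \lim_{\lambda \to \infty} \left[ \frac{\lambda - K}{\lambda}(\tau, \x, \z) + \sum_{i \in [p]}\sum_{l \in [k_i]} \frac{1}{\lambda}(0, \lambda \x^{i,l}, \z^{i,l}) \right]
\end{align*}
exhibits $(\tau, \x + \bar \x, \z)$ as a limit of convex combinations of elements of $\cT$, which places it in $\cl\conv(\cT)$ and completes the proof. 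The main (and essentially only) delicate point is ensuring that each $\a^\top \x^{i,l} = 0$ holds \emph{globally} rather than merely on partition $i$; this is automatic since $\x^{i,l}$ is supported entirely inside partition $i$, so there is no substantial obstacle beyond careful bookkeeping of the partition-wise indexing.
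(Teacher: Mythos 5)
Your proof is correct and is essentially the argument the paper intends: the paper omits the proof of Lemma~\ref{lem:rec:T:general}, stating only that it ``follows the same path'' as Lemma~\ref{lem:rec:T:connected}, and your partition-wise application of the path/telescoping construction (with the boundary term killed by $\a_i^\top\bar\x_i=0$ in each block) is exactly that adaptation.
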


\begin{proposition}
	\label{prop:general}
	Under Assumption~\ref{asmp:general}, we have
	\begin{align*}
		\cl\conv(\cT) = \cl\conv \left(\set{(\tau, \x, \z): \exists \s,  \w, \t \text{~s.t.~} (\x, \z, \s, \w. \t) \in \widetilde \cT, ~ \ones^\top \t = \tau
		} \right),
	\end{align*}
	where $\widetilde \cT$ is as defined in~\eqref{eq:T:tilde}.
\end{proposition}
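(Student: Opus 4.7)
The plan is to mirror the proof strategy of Proposition~\ref{prop:connected}, lifting it from the single-rank case to the $p$-partition case by exploiting the fact that the support of any $\z\in\cZ$ lies entirely within one connected component of $G_\cZ$. That combinatorial fact is what makes the decomposition $h(\a^\top\x)=\sum_{i\in[p]} h(\a_i^\top\x_i)$ meaningful on $\cT$ (equation~\eqref{eq:indexing}) and is the basis for the whole argument. The main obstacle is the ``$\supseteq$'' direction of the set identification $\cT+\cR=\cT'$ below: one has to carefully pick a decomposition of a general point of $\cT'$ into a piece that satisfies the complementarity constraints of $\cT$ and a recessive piece, using the specific component $i^\star$ whose subvector $\z_{i^\star}$ contains the support of $\z$.

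First, I will identify the recessive cone. By Lemma~\ref{lem:rec:T:general}, the set
\begin{align*}
\cR:=\set{(\bar\tau,\bar\x,\bar\z)\in\R\times\R^d\times\R^d:\,\bar\tau=0,\,\bar\z=\bm 0,\,\a_i^\top\bar\x_i=0\text{ for all }i\in[p]}
\end{align*}
satisfies $\cR\subseteq\rec(\cl\conv(\cT))$. Since $\cT\subseteq\cT+\cR$ and $\cT+\cR\subseteq\cl\conv(\cT)+\cR\subseteq\cl\conv(\cT)$, this gives $\cl\conv(\cT)=\cl\conv(\cT+\cR)$.

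Second, I will introduce the intermediate set
\begin{align*}
\cT':=\set{(\tau,\x,\z)\in\R\times\R^d\times\cZ:~\textstyle\sum_{i\in[p]}h(\a_i^\top\x_i)\leq\tau,~\z_i=\bm 0\Rightarrow \a_i^\top\x_i=0,\,\forall i\in[p]}
\end{align*}
and show $\cT+\cR=\cT'$. For ``$\subseteq$'': given $(\tau,\x,\z)\in\cT$, the support of $\z$ lies in a single partition $i^\star$ (since any two indices with $z_j=z_k=1$ must lie on an edge of $G_\cZ$, hence in the same component), so for $i\neq i^\star$ the complementarity forces $\x_i=\bm 0$ and thus $\a_i^\top\x_i=0$, while $h(\a^\top\x)=h(\a_{i^\star}^\top\x_{i^\star})=\sum_{i\in[p]}h(\a_i^\top\x_i)$ using $h(0)=0$; adding any direction in $\cR$ preserves all these properties. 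For ``$\supseteq$'': if $\z=\bm 0$ then every $\a_i^\top\x_i=0$, so $(\tau,\bm 0,\bm 0)\in\cT$ and $(0,\x,\bm 0)\in\cR$ decompose the point. If $\z\neq\bm 0$, let $i^\star$ be the partition containing $\supp(\z)$ and pick $j^\star$ with $z_{j^\star}=1$; define $\x_1$ by $(\x_1)_i=\bm 0$ for $i\neq i^\star$ and $(\x_1)_{i^\star}=\frac{\a_{i^\star}^\top\x_{i^\star}}{a_{j^\star}}\e_{j^\star}^{(i^\star)}$, so that $\x_1$ satisfies the complementarity with $\z$ and $\a^\top\x_1=\a_{i^\star}^\top\x_{i^\star}$. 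Then $h(\a^\top\x_1)=h(\a_{i^\star}^\top\x_{i^\star})\leq\sum_i h(\a_i^\top\x_i)\leq\tau$, so $(\tau,\x_1,\z)\in\cT$, and $(0,\x-\x_1,\bm 0)\in\cR$ since $\a_i^\top(\x-\x_1)_i=0$ for each $i$.

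Third, I will show $\cT'=\Proj_{\tau,\x,\z}\set{(\x,\z,\s,\w,\t)\in\widetilde\cT:\ones^\top\t=\tau}$. Existence of $(\s,\w,\t)$ with $s_i=\a_i^\top\x_i$, $t_i=h(s_i)$ (padded so $\ones^\top\t=\tau$), and $w_i\in\set{0,1}$ satisfying $w_i\leq\ones^\top\z_i$ and $s_i(1-w_i)=0$ is equivalent to the pair of conditions $\sum_i h(\a_i^\top\x_i)\leq\tau$ and $\z_i=\bm 0\Rightarrow\a_i^\top\x_i=0$: if $\z_i=\bm 0$ then $w_i=0$ is forced and $s_i=0$, while if $\z_i\neq\bm 0$ then $\ones^\top\z_i\geq 1$ lets $w_i=1$ and imposes no constraint on $s_i$. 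Combining the three steps,
\begin{align*}
\cl\conv(\cT)=\cl\conv(\cT+\cR)=\cl\conv(\cT')=\cl\conv\set{(\tau,\x,\z):\exists\s,\w,\t,\,(\x,\z,\s,\w,\t)\in\widetilde\cT,\ones^\top\t=\tau},
\end{align*}
which is the claimed identity.
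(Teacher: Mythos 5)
Your proof is correct and follows exactly the route the paper intends: the paper omits the proof of Proposition~\ref{prop:general}, stating only that it follows the same path as Proposition~\ref{prop:connected}, and your three steps (recessive cone $\cR$ via Lemma~\ref{lem:rec:T:general}, the intermediate set $\cT'$ with per-partition implications satisfying $\cT+\cR=\cT'$, and the identification of $\cT'$ with the projection of $\{(\x,\z,\s,\w,\t)\in\widetilde\cT:\ones^\top\t=\tau\}$) are precisely that adaptation. One small point worth tightening: the inequality $h(\a_{i^\star}^\top\x_{i^\star})\leq\sum_{i}h(\a_i^\top\x_i)$ is not valid for an arbitrary proper convex $h$ with $h(0)=0$ (such $h$ may take negative values), but it holds here with equality because membership in $\cT'$ together with $\z_i=\bm 0$ for $i\neq i^\star$ forces $\a_i^\top\x_i=0$, hence $h(\a_i^\top\x_i)=0$, for every $i\neq i^\star$.
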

We provide a characterization for $\Delta_p$ in Lemma~\ref{lem:conv:Delta:p} in Appendix~\ref{appendix:A}.

\subsection{Rank-1 functions with sign-constrained continuous variables} \label{sec:applications:rank1:nonnegative}
Recall the mixed-binary set
\begin{align*}
	\cT = \set{ (\tau, \x, \z) \in \R \times \cX \times \cZ:~ h (\a^\top \x ) \leq \tau, ~ x_i (1 - z_i) = 0, ~ \forall i \in [d] }.
\end{align*} 
In this section we consider the case where $\cX = \{\x \in \R^d : x_i \geq 0, \forall i \in \cI \}$ for some $\cI \subseteq [d]$ and $\cZ\subseteq\{0,1\}^d$.  
This type of set appears as a substructure in fixed-charge network problems~\citep{wolsey1989submodularity}, smooth signal estimation~\citep{han2021compact}, outlier detection~\citep{gomez2021outlier}, and nonnegative least squares regression. 
Our result relies on the following assumption.

\begin{assumption}
	\label{asmp:nonnegative}
	The set $\cX = \{\x \in \R^d : x_i \geq 0, \, \forall i \in \cI \}$ for some set $\cI \subseteq [d]$. For any $i,j \in [d]$, there exists a point $\z \in \cZ \subseteq \set{0,1}^d$ such that $z_i = z_j = 1$. The vector $\a \in \R^d$ satisfies $a_i \neq 0$ for all $i \in [d]$. The function $h: \R \to \overline \R$ is nonlinear, proper, lower semicontinuous, and convex with $h(0) = 0$.
\end{assumption} 

Examples of boolean sets $\cZ$ that satisfy Assumption~\ref{asmp:nonnegative} include cardinality constraint set with parameter $\kappa \in [d]\backslash\{1\}$, weak hierarchy set, and strong hierarchy set.
Note that when the parameter of the cardinality constraint is set $\kappa = 1$, we have $\cZ = \{\z \in \set{0,1}^d: \ones^\top \z \leq 1 \}$. This case is simple as $\cZ$ is totally unimodular, and the logical constraint $x_i (1 - z_i) = 0, \forall i \in [d]$ along with $\ones^\top \z \leq 1$ enforces that at most one $x_i$ can be nonzero. Since $h(0) = 0$, we infact have $h(\a^\top \x) = \sum_{i \in [d]} h(a_i x_i)$. Thus, Remark~\ref{rem:disjoint:TU} is applicable and we arrive at
\begin{align*}
	\conv(\cT) = \set{ (\tau, \x, \z) \in \R \times \cX \times [0, 1]^d:~ \sum_{i \in [d]} h^\pi(a_i x_i, z_i) \leq \tau, ~ \sum_{i \in [d]} z_i \leq 1 }.
\end{align*}
In the following we consider more challenging binary sets $\cZ$.

\begin{theorem}
	\label{thm:nonnegative}
	Under Assumption~\ref{asmp:nonnegative}, the set $\cT$ defined in~\eqref{eq:T} satisfies
	\begin{align*}
		\cl\conv(\cT) \!=\! \set{ (\tau, \x, \z) \!\in\! \R \!\times\! \cX \!\times\! \R^d \!:~ \exists \s, \w \text{~s.t.~}
			\begin{array}{l}
				\sum_{i \in [d]} h^\pi(a_i s_i, w_i) \leq \tau, \\ [1ex]
				0 \leq s_i \leq x_i, ~ \forall i \in \cI, \\ [1ex]
				\a^\top \s = \a^\top \x, \\ [1ex] 
				(\w, \z) \in \conv(\Omega) 
		\end{array}},
	\end{align*}
	where $\Omega := \{(\w, \z) \in \set{0,1}^d \times \cZ:~ \ones^\top \w \leq 1, ~\w \leq \z\}$.
\end{theorem}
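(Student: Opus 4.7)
My plan is to reproduce the proof template used in Theorems~\ref{thm:separable} and~\ref{thm:connected}: isolate the recessive directions of $\cT$, encode them via a lifted mixed-binary set that is an instance of $\cW$, apply Proposition~\ref{prop:separable}, and finish with Lemma~\ref{lem:tools} to close and project back. Compared with the $\cX=\R^d$ case, the sign constraints on the coordinates in $\cI$ now interact with both the recessive-direction step and the linking constraint $0 \leq s_i \leq x_i$ of the lifted set.

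First, I would establish an analog of Lemma~\ref{lem:rec:T:connected} tailored to this setting: if $(\tau,\x,\z) \in \cl\conv(\cT)$ and $\bar\x \in \R^d$ satisfies $\a^\top \bar\x = 0$ together with $\bar x_i \geq 0$ for every $i \in \cI$, then $(\tau,\x + \bar\x,\z) \in \cl\conv(\cT)$. The argument decomposes $\bar\x$ into elementary two-coordinate balancing blocks of the form $c(\e_i - (a_i/a_j)\e_j)$, each having $\a$-value zero. For each such pair $(i,j)$, Assumption~\ref{asmp:nonnegative} supplies some $\z^{ij} \in \cZ$ with $z^{ij}_i = z^{ij}_j = 1$, so the scaled points $(0,\lambda\bar\x^{ij},\z^{ij}) \in \cT$ can be added in the limit $\lambda\to\infty$. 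The sign condition $\bar x_i \geq 0$ on $\cI$ lets us always route a positive contribution in a sign-constrained coordinate against a coordinate in $\cI^c$ (or an oppositely-signed index in $\cI$), keeping the entire half-line inside~$\cX$.

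Next, I would introduce the lifted mixed-binary set
\begin{align*}
\overline{\cT} := \left\{(\tau,\x,\z,\s,\w,\t) :
\begin{array}{l}
\x \in \cX,~ (\w,\z) \in \Omega,~ \ones^\top \t = \tau, \\[0.3ex]
h^+(a_i s_i, w_i) \leq t_i, ~ \forall i \in [d], \\[0.3ex]
0 \leq s_i \leq x_i,~ \forall i \in \cI,~ \a^\top \s = \a^\top \x
\end{array}\right\}
\end{align*}
and show, in the spirit of Proposition~\ref{prop:connected}, that $\cl\conv(\cT)$ equals the closure of the projection of $\conv(\overline{\cT})$ onto $(\tau,\x,\z)$. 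The $\supseteq$ direction is the substantive one: for any $(\tau,\x,\z,\s,\w,\t)\in\overline{\cT}$, the vector $\bar\x := \x - \s$ satisfies $\a^\top\bar\x = 0$ and $\bar x_i \geq 0$ on $\cI$ (from $s_i \leq x_i$), so a point of the form $(\tau,\s,\z)$, which belongs to $\cT$ up to averaging in $\w$, can be augmented by the recessive direction $(\bm 0,\bar\x,\bm 0)$ to reach $(\tau,\x,\z)$. The $\subseteq$ direction lifts a generic $(\tau,\x,\z)\in\cT$ into $\conv(\overline{\cT})$ by distributing the single-index activity across the support of $\z$ as a convex combination of integer $\w$'s; the inequality $0 \leq s_i \leq x_i$ is then automatically enforced by this averaging.

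Finally, I would write the $(\s,\w,\z,\t)$ substructure of $\overline{\cT}$ as an instance of the conic mixed-binary set $\cW$ with $\beta_{i,1} = a_i s_i$, $\delta_i = w_i$, and $\Delta = \Omega$, without additional cone constraints. Assumption~\ref{asmp:nonnegative}, applied with $j=i$, ensures the hypothesis of Proposition~\ref{prop:separable} is met, so its application yields the closed convex hull of this substructure in the form $h^\pi(a_i s_i,w_i) \leq t_i$ with $(\w,\z) \in \conv(\Omega)$. Lemma~\ref{lem:tools}\,(i) collapses the $\t$-variables into $\sum_i h^\pi(a_i s_i,w_i) \leq \tau$, and Lemma~\ref{lem:tools}\,(ii) commutes the closure with the projection onto $(\tau,\x,\z)$ once the recession-cone triviality condition is checked: any direction $(\bar\tau,\bar\x,\bar\z,\bar\s,\bar\w,\bar\t)$ with $\bar\tau=0$, $\bar\x=\bm 0$, $\bar\z=\bm 0$, $\ones^\top\bar\t=0$, and $(\bar\w,\bar\z)\in\rec(\conv(\Omega))=\{\bm 0\}$ forces $\bar\w=\bm 0$, then $\bar\s=\bm 0$ (via $\a^\top\bar\s=0$ and $h$ being nonlinear so $\epi(h^\pi)$ is pointed by Lemma~\ref{lem:epi:perspective}), and then $\bar\t=\bm 0$. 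The main obstacle I anticipate is Step 2, and specifically the $\subseteq$ inclusion, because one must reconcile the single-active-index structure $\ones^\top\w\leq 1$ and $\w\leq\z$ with the partial sign restriction $i\in\cI$ to produce a valid $(\s,\w)$ representation of an arbitrary point of $\cT$ inside $\conv(\overline{\cT})$ rather than inside $\overline{\cT}$.
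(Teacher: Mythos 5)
Your overall skeleton (recession lemma for directions $\bar\x\in\cX$ with $\a^\top\bar\x=0$, identification of $\cl\conv(\cT)$ with the closed convex hull of a lifted set, casting the per-coordinate structure as an instance of $\cW$ with $\gb_i=(a_is_i,x_i)$ and $\Delta=\Omega$, then Proposition~\ref{prop:separable} and Lemma~\ref{lem:tools}) matches the paper's route. But there is a genuine gap at the point where you pass from the convex hull of the per-block substructure to the convex hull of your lifted set $\overline{\cT}$. Your $\overline{\cT}$ contains the coupling equality $\a^\top\s=\a^\top\x$, which links all coordinates at once and therefore cannot be absorbed into the per-block conic constraints $\bC_i\gb_i\in\C_i$ of $\cW$; and it is not of the form handled by Lemma~\ref{lem:tools}\,(i) either, since that lemma requires the aggregated quantity ($\tau=\ones^\top\t$) to be a \emph{free} variable of the ambient set, whereas here both $\a^\top\s$ and $\a^\top\x$ are constrained ($\s,\x\in\cX$, $0\le s_i\le x_i$ on $\cI$). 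Consequently, applying Proposition~\ref{prop:separable} to the substructure and then appending $\a^\top\s=\a^\top\x$ only shows that the proposed extended formulation is a \emph{relaxation} of $\conv(\overline{\cT})$; you have not shown that $\conv(\widehat{\cT}\cap\{\a^\top\s=\a^\top\x\})=\conv(\widehat{\cT})\cap\{\a^\top\s=\a^\top\x\}$, where $\widehat{\cT}$ denotes the set without the coupling equality. This identity is false for generic sets and hyperplanes and must be proved here.

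This is exactly what the paper's Lemma~\ref{lem:conv} supplies, and it is the genuinely new ingredient of this section relative to the $\cX=\R^d$ case. The paper proves it by an iterative pairwise rounding scheme: given a convex combination $\sum_k\lambda_k(\tau^k,\x^k,\s^k,\z^k,\w^k,\t^k)$ of points of $\widehat{\cT}$ whose aggregate satisfies the equality, it picks one atom with $\a^\top\s^j<\a^\top\x^j$ and one with $\a^\top\s^l>\a^\top\x^l$, replaces $\x^j$ by $\s^j$ and adds $(\lambda_j/\lambda_l)(\x^j-\s^j)$ to $\x^l$, and checks that the new atoms remain in $\widehat{\cT}$ precisely because $0\le s^j_i\le x^j_i$ guarantees the transferred surplus is nonnegative on $\cI$. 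Iterating makes every atom satisfy the equality. Without this step (or a substitute for it), your final application of Lemma~\ref{lem:tools}\,(ii) and Fourier--Motzkin elimination is applied to the wrong set, and the claimed description of $\cl\conv(\cT)$ is only an outer approximation. The worry you flag about the $\subseteq$ inclusion in your Step~2 is manageable and handled in the paper by concentrating the $\a^\top\x$-mass on single coordinates of $\cK_{>}=\{k:(a_kx_k)(\a^\top\x)>0\}$ with weights $\lambda_k=a_kx_k/(\a^\top\bm\delta)$; the real missing piece is the hull-commutation argument above.
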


\begin{remark}
	\label{rmk:han}
	When $\cZ = \set{0,1}^d$ and $h$ is real-valued, \citet[Proposition~3]{han2021compact} give an extended formulation for $\cl\conv(\cT)$. Their proof involves two steps. The first step is based on \citep[Theorem 1]{han2021compact}, which employs a support function argument and a disjunctive programming method to obtain a lifted description of $\cl\conv(\cT)$ with $2d^2 + 6d + 4$ additional variables. The second step employs the Fourier-Motzkin elimination method to reduce the number of additional variables to $2d$.
	Extending this proof technique to include combinatorial constraints on $\cZ$, however, is not straightforward (if possible at all) as both steps heavily rely on the properties of the unconstrained set $\set{0,1}^d$. For example, even if $\cI = \emptyset$, the support function argument requires the underlying graph of $\cZ$ to be connected, as shown in \citep[Theorem 1]{wei2022ideal}. In contrast, Theorem~\ref{thm:nonnegative} introduces $2d$ additional variables $\w,\s$ and leverages the relation between the original  binary variables $\z$ and the newly introduced the binary variables $\w$ through the set $\Omega$. As a result,  Theorem~\ref{thm:nonnegative} reduces the complexity of characterizing $\cl\conv(\cT)$ to the complexity of characterizing $\conv(\Omega)$.
	\qed
\end{remark}

To prove Theorem~\ref{thm:connected}, we first make an observation about the recessive direction of $\cT$. 

\begin{lemma}
	\label{lem:rec:T:nonnegative}
	Under Assumption~\ref{asmp:nonnegative}, if $(\tau, \x, \z) \in \cl\conv (\cT)$, then for any $\bar \x \in \cX$ satisfying $\a^\top \bar \x = 0$ we have $(\tau, \x + \bar \x, \z) \in \cl\conv(\cT)$.
\end{lemma}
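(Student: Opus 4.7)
The plan is to mirror the strategy employed in the proof of Lemma~\ref{lem:rec:T:connected}, modified to respect the sign restrictions $x_i \geq 0$ for $i \in \cI$. Concretely, I will decompose $\bar \x$ as a finite sum $\bar \x = \sum_{\ell} \x^{\ell}$ in which each $\x^{\ell}$ is supported on a pair of indices $\{i_\ell, j_\ell\}$, satisfies $\a^\top \x^\ell = 0$, and lies in $\cX$. Assumption~\ref{asmp:nonnegative} ensures that for every such pair there is some $\z^{\ell} \in \cZ$ with $z^\ell_{i_\ell} = z^\ell_{j_\ell} = 1$, so the scaled vector $(0, \lambda \x^\ell, \z^\ell)$ will belong to $\cT$ for every $\lambda \geq 0$. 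Once this is in hand, a limiting convex combination in the style of Lemma~\ref{lem:rec:T:connected},
\[
(\tau, \x + \bar \x, \z) = \lim_{\lambda \to \infty} \left[\tfrac{\lambda - k}{\lambda}(\tau, \x, \z) + \sum_{\ell} \tfrac{1}{\lambda}(0, \lambda \x^\ell, \z^\ell)\right],
\]
will place $(\tau, \x + \bar \x, \z)$ in $\cl\conv(\cT)$ by closedness and convexity.

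To construct the pairwise decomposition, I will use a transportation flow argument. Define $P^+ := \{i \in [d]: a_i \bar x_i > 0\}$ and $P^- := \{i \in [d] : a_i \bar x_i < 0\}$; since $a_i \neq 0$, we have $\bar x_i = 0$ outside $P^+ \cup P^-$, and $\a^\top \bar \x = 0$ reduces to $\sum_{i \in P^+} a_i \bar x_i = -\sum_{j \in P^-} a_j \bar x_j =: S$. Any nonnegative flow $\{f_{ij}\}_{(i,j) \in P^+ \times P^-}$ with row sums $a_i \bar x_i$ and column sums $-a_j \bar x_j$ is feasible; for concreteness, take $f_{ij} := (a_i \bar x_i)(-a_j \bar x_j)/S$ when $S > 0$ (and the trivial case $S = 0$ reduces to $\bar \x = \bm 0$). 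For each such pair, set $\x^{ij} := (f_{ij}/a_i) \e_i - (f_{ij}/a_j) \e_j$; a direct computation gives $\a^\top \x^{ij} = 0$, and summing row- and column-wise yields $\sum_{(i,j)} \x^{ij} = \bar \x$.

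The crucial step, and the one that I expect requires the most care, is verifying $\x^{ij} \in \cX$. For $i \in P^+ \cap \cI$, the hypothesis $\bar x_i \geq 0$ together with $a_i \bar x_i > 0$ forces $a_i > 0$, so the coefficient $f_{ij}/a_i$ on $\e_i$ is nonnegative; symmetrically, $j \in P^- \cap \cI$ forces $a_j < 0$, rendering $-f_{ij}/a_j$ nonnegative. Thus the sign compatibility is automatic, and it is precisely this property that makes the pairwise decomposition cleaner here than the path-based one used in Lemma~\ref{lem:rec:T:connected}. Combined with the fact that Assumption~\ref{asmp:nonnegative} makes $G_\cZ$ a complete graph, so a witness $\z^{ij}$ covering any support $\{i,j\}$ always exists, the limiting argument from the first paragraph then yields $(\tau, \x + \bar \x, \z) \in \cl\conv(\cT)$ and completes the proof.
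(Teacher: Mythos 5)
Your proposal is correct and is essentially identical to the paper's own proof: your index sets $P^+, P^-$ are the paper's $\cK_>, \cK_<$, and your flow $f_{ij}=(a_i\bar x_i)(-a_j\bar x_j)/S$ produces exactly the paper's vectors $\x^{kl}$, with your sign verification for $i\in\cI$ amounting to the paper's observation that $\sign(x^{kl}_k)=\sign(\bar x_k)$. The concluding limiting convex combination with $\kappa=|P^+|\cdot|P^-|$ terms is likewise the same.
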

\begin{proof}
	Based on $\bar \x$, we introduce the index sets $\cK_{>} = \{ k \in [d] : a_k \bar x_k > 0 \}$ and $\cK_{<} = \{ k \in [d] : a_k \bar x_k < 0 \}$. If $\cK_{>} = \cK_{<} = \emptyset$, then $\bar \x = \bm 0$ and the claim trivially holds. In the following we assume that $\cK_{>}$ and $\cK_{<}$ are both~nonempty (as $\a^\top \bar \x = 0$ either one of $\cK_{>}$ and $\cK_{<}$ being nonempty implies the other is also nonempty).
	
	For every $k \in \cK_{>}$ and $l \in \cK_{<}$, we construct the vector 
	\begin{align*}
		\x^{kl} := \left( \frac{a_l \bar x_l \bar x_k}{\sum_{j \in \cK_{<}} a_j \bar x_j} \right) \e_k + \left( \frac{a_k \bar x_k \bar x_l}{\sum_{q \in \cK_{>}} a_q \bar x_q} \right) \e_l.
	\end{align*}
	By construction, $\x^{kl}$ satisfies 
	\begin{align*}
		\textstyle \a^\top \x^{kl} = (a_k a_l \bar x_l \bar x_k)/(\sum_{j \in \cK_{<}} a_j \bar x_j) + (a_k a_l \bar x_l \bar x_k)/(\sum_{q \in \cK_{<}} a_q \bar x_q) = 0
	\end{align*}
	since $\sum_{j \in \cK_{<}} a_j \bar x_j = - \sum_{q \in \cK_{>}} a_q \bar x_q$ as $\a^\top \bar \x = 0$. We also have $\x^{kl} \in \cX$ as $\sign(x^{kl}_k) = \sign(\bar x_k)$ and $\sign(x^{kl}_l) = \sign(\bar x_l)$ and $\bar\x\in\cX$. Furthermore,  by Assumption~\ref{asmp:nonnegative}, there exits a binary vector $\z^{kl} \in \cZ$ such that $\z^{kl}_k = \z^{kl}_l = 1$. Thus, the support of $\x^{kl}$ is covered by the binary vector $\z^{kl}$.
	Finally, note~that 
	\begin{align*}
		\sum_{k \in \cK_{>}} \sum_{l \in \cK_{<}} \x^{kl} 
		&= \sum_{k \in \cK_{>}} \sum_{l \in \cK_{<}} \left( \frac{a_l \bar x_l \bar x_k}{\sum_{j \in \cK_{<}} a_j \bar x_j} \right) \e_k + \!\! \sum_{k \in \cK_{>}} \sum_{l \in \cK_{<}} \left( \frac{a_k \bar x_k \bar x_l}{\sum_{q \in \cK_{>}} a_q \bar x_q} \right) \e_l \\
		&=  \sum_{k \in \cK_{>}} \bar x_k \e_k + \sum_{l \in \cK_{<}} \bar x_l \e_l = \sum_{i \in [d]} \bar x_i \e_i = \bar \x.
	\end{align*}
	Since $h(\a^\top \x^{kl}) = h(0) = 0$ and the support of $\x^{kl}$ is covered by $\z^{kl}$, we conclude $(0, \lambda \x^{kl}, \z^{kl}) \in \cT$ for every $\lambda \in \R$, $k \in \cK_{>}$, and $l \in \cK_{<}$.
	Define $\kappa := | \cK_{>} | \cdot | \cK_{<} |$. Then, as $\cl\conv(\cT)$ is both closed and convex, we have
	\begin{align*}
		(\tau, \x + \bar \x, \z) \!=\! \lim_{\lambda \to +\infty} \! \left[\frac{\lambda - \kappa }{\lambda} (\tau, \x, \z) + \sum_{k \in \cK_{>}} \sum_{l \in \cK_{<}} \frac{1}{\lambda}(0, \lambda \x^{kl}, \z^{kl})\right] \!\!\in\! \cl\conv(\cT).
	\end{align*} 
	Hence, the claim follows.
	\qed
\end{proof}

We next introduce the auxiliary mixed-binary set
\begin{align*}
	\overline \cT_s := \set{(\tau, \x, \z): \exists \s, \w, \t \text{~s.t.~} (\x, \s, \z, \w, \t) \in \widehat \cT, \a^\top \x = \a^\top \s, ~\ones^\top \t = \tau},
\end{align*}
where 
\begin{align}
	\label{eq:T:hat}
	\widehat \cT \!\!:=\!\! \set{ \! (\x, \z, \s, \w, \t) \!\in\! \cX \!\times\! \R^d \!\times\! \cX \!\times\! \R^d \!\times\! \R^d \!:\!
		\begin{array}{l}
			h(a_i s_i) \leq t_i ,  \, \forall i \in [d], \\ [1ex] 
			s_i (1 - w_i) = 0, \, \forall i \in [d], \\ [1ex]
			(\w, \z) \!\in\! \Omega, ~ s_i \leq x_i, \, \forall i \!\in\! \cI
		\end{array} \!
	},
\end{align}
and $\Omega$ is as defined in Theorem~\ref{thm:nonnegative}. We first establish that the closed convex hull of $\cT$ can be obtained from the set $\overline \cT_s$.
\begin{proposition}
	\label{prop:nonnegative}
	Under Assumption~\ref{asmp:nonnegative}, we have $\cl\conv(\cT) = \cl\conv(\overline \cT_s)$.
\end{proposition}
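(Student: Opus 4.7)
The plan is to follow the two-direction proof pattern of Proposition~\ref{prop:connected}, showing $\overline\cT_s \subseteq \cl\conv(\cT)$ via the recessive-direction lemma and the reverse $\cT \subseteq \cl\conv(\overline\cT_s)$ via an explicit convex decomposition. For the first inclusion, I would appeal to Lemma~\ref{lem:rec:T:nonnegative}. Given $(\tau,\x,\z) \in \overline\cT_s$ with witnesses $(\s,\w,\t)$, the constraints defining $\Omega$ (which force $\ones^\top \w \leq 1$ and $\w \leq \z$) together with $s_i(1-w_i)=0$ restrict $\s$ to be supported on at most a single index $j$ with $z_j = 1$ (or to be identically zero). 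This collapses $\sum_{i \in [d]} h(a_i s_i) \leq \ones^\top \t = \tau$ to $h(\a^\top \s) = h(\a^\top \x) \leq \tau$, placing $(\tau,\s,\z) \in \cT$. Setting $\bar\x := \x - \s$, the bounds $s_i \leq x_i$ on $\cI$ together with $\s \in \cX$ give $\bar\x \in \cX$, and $\a^\top \bar\x = 0$ follows from the linking $\a^\top \x = \a^\top \s$. Lemma~\ref{lem:rec:T:nonnegative} then places $(\tau,\x,\z) = (\tau,\s,\z) + (0,\bar\x,\bm 0)$ in $\cl\conv(\cT)$.

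For $\cT \subseteq \cl\conv(\overline\cT_s)$, given $(\tau,\x,\z) \in \cT$, the case $\a^\top \x = 0$ places the point directly in $\overline\cT_s$ via the trivial witness $\s = \bm 0, \w = \bm 0$. When $v := \a^\top \x \neq 0$, my plan is to realize the point as a convex combination of $\overline\cT_s$ elements. Concretely, I will partition $J := \supp(\z)$ into nonempty blocks $J_1,\ldots,J_m$ with designated indices $j_k \in J_k$ satisfying: (a) both $\a^\top \x|_{J_k}$ and $a_{j_k} x_{j_k}$ share the sign of $v$, and (b) $|\a^\top \x|_{J_k}| \leq |a_{j_k} x_{j_k}|$. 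Then $\lambda_k := \a^\top \x|_{J_k}/v > 0$ and $\sum_k \lambda_k = 1$; the scaled blocks $\y^k := \x|_{J_k}/\lambda_k$ satisfy $\y^k \in \cX$, $\a^\top \y^k = v$, and $\sum_k \lambda_k \y^k = \x$; and the single-index witness $\s^k := (v/a_{j_k})\e_{j_k}$ with $\w^k := \e_{j_k}$ places each $(h(v), \y^k, \z)$ in $\overline\cT_s$, where (a) ensures $s^k_{j_k} \geq 0$ whenever $j_k \in \cI$ and (b) ensures $s^k_{j_k} \leq y^k_{j_k}$. Distributing the nonnegative slack $\tau - h(v)$ among the $\tau^k$'s then yields $(\tau,\x,\z) = \sum_k \lambda_k (\tau^k, \y^k, \z) \in \conv(\overline\cT_s)$.

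The main obstacle is the combinatorial construction of this partition. The balance $\sum_{i \in J:\, \sign(a_i x_i) = \sign(v)} |a_i x_i| = |v| + \sum_{i \in J:\, \sign(a_i x_i) \neq \sign(v)} |a_i x_i|$ ensures enough same-sign mass to absorb every opposite-sign contribution while still leaving a dominant same-sign index in each block. A greedy scheme that selects dominant same-sign indices as anchors, absorbs opposite-sign indices into their blocks until each block's running net drops below its anchor's magnitude, and places the leftover same-sign indices as singletons can be shown to produce a valid partition; carrying out this algorithmic description and verifying (a)--(b) at termination constitutes the most delicate part of the argument.
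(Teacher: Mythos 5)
Your overall architecture is sound and your first inclusion, $\overline\cT_s \subseteq \cT + \cR \subseteq \cl\conv(\cT)$ via Lemma~\ref{lem:rec:T:nonnegative}, is essentially identical to the paper's: the single-support structure of $\s$ forced by $\Omega$ and $s_i(1-w_i)=0$ puts $(\tau,\s,\z)\in\cT$, and $\bar\x=\x-\s$ is a valid recessive direction. For the reverse inclusion your route genuinely differs from the paper's. You partition $\supp(\z)$ into blocks, rescale each block by its weight $\lambda_k$, and anchor each block at a dominant same-sign index; the paper instead avoids any partition by defining $\cK_> = \{k : (a_kx_k)(\a^\top\x)>0\}$, $\bm\delta=\sum_{k\in\cK_>}x_k\e_k$, and the points $\hat\x^k = \x-\bm\delta+\tfrac{\a^\top\bm\delta}{a_k}\e_k$ with weights $\lambda_k = a_kx_k/\a^\top\bm\delta$. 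In the paper's decomposition every piece keeps all opposite-sign coordinates intact and concentrates the entire same-sign mass on one coordinate $k$, so the domination $|\a^\top\x|\le|\a^\top\bm\delta|$ needed for the witness $\hat\s^k=(\a^\top\x/a_k)\e_k$ is automatic and no combinatorial lemma is required.

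The gap is precisely the step you flag: the partition with properties (a)--(b) does exist, but the greedy you sketch does not produce it. Your scheme assigns each opposite-sign index to a block containing a \emph{single} same-sign anchor (leftover same-sign indices become singletons), and this fails on, e.g., $v=\a^\top\x>0$ with same-sign contributions $(a_ix_i)_{i}=(1,1)$ and one opposite-sign contribution $-1.9$: no single anchor of value $1$ can absorb $-1.9$ while keeping its block sum positive, yet the opposite-sign index must live in some block. The only valid partition here is a single block containing \emph{both} same-sign indices, which your (a)--(b) permit but your greedy never builds. The existence claim can be repaired by a short induction --- if $\max_i a_ix_i \ge v$ (signs normalized so $v>0$) take one block containing everything, anchored at the maximizer; otherwise peel the maximizer off as a singleton block and recurse on the remainder, whose total is $v-\max_i a_ix_i>0$ --- but as written the construction, which you yourself identify as the most delicate part, is both unproven and incorrect. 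The paper's choice of decomposition is worth internalizing here exactly because it makes this entire combinatorial difficulty disappear.
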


\begin{proof}
	From Lemma~\ref{lem:rec:T:nonnegative}, we deduce that the set 
	\begin{align*}
		\cR := \set{(\bar \tau, \bar \x, \bar \z) \in \R \times \cX \times \R^m: \bar \tau = 0, ~ \bar \z = 0, ~ \a^\top \bar \x = 0}
	\end{align*}
	is contained in $\rec(\cT)$. Thus, $\cl\conv(\cT) = \cl\conv(\cT + \cR)$.
	We prove the proposition by showing that $\cl\conv(\cT) \subseteq \cl\conv(\overline \cT_s)$ and  $\cT + \cR \supseteq \overline \cT_s$ (which immediately implies $\cl\conv(\cT + \cR) \supseteq  \cl\conv(\overline \cT_s)$). 
	
	$\bm{(\subseteq)}$ Let $(\tau, \x, \z) \in \cT$. If $\a^\top \x = 0$, then by setting $\s = \w = \t = \bm 0$ we see that $(\tau, \x, \z) \in \overline \cT_s$. Thus, we next focus on the case of $\a^\top \x \neq 0$. In this case, $(\tau, \x, \z)$ may not necessarily be in $\overline \cT_s$. Nonetheless, we will show that $(\tau, \x, \z) \in \conv(\overline \cT_s)$. To this end, we introduce the set 
	$\cK_{>} := \{k \in [d]: (a_k x_k) (\a^\top \x) > 0 \}$, and the vector $\bm \delta := \sum_{k \in \cK_{>}} x_k \e_k$. We next define the points
	\begin{align*}
		(\hat \tau^k, \hat \x^k, \hat \z^k) := \left(\tau, \x - \bm \delta + \frac{\a^\top \bm \delta}{a_k} \e_k , \z \right), \quad \forall k \in \cK_{>}.
	\end{align*}
	Note here that by definition of $\cK_{>}$, we have $a_k\neq 0$ and $x_k\neq 0$ for all $k\in\cK_{>}$. 
	By construction, we have $\hat \x^k \circ (1 - \hat \z^k) = 0$ as $(\tau, \x, \z) \in \cT$, and also
	\begin{align*}
		\a^\top \hat \x^k = \a^\top \x - \a^\top \bm \delta + \frac{\a^\top \bm \delta}{a_k} \a^\top \e_k = \a^\top \x.
	\end{align*}
	Thus, we have $h(\a^\top \hat\x^k) = h(\a^\top \x) \leq \tau=\hat \tau^k$ and $(\hat \tau^k, \hat \x^k, \hat \z^k) \!\in\! \cT$ for all $k \in \cK_{>}$. 
	Moreover, we claim that $(\hat \tau^k, \hat \x^k, \hat \z^k) \in \overline \cT_s$ for all $k \in \cK_{>}$.  
	To see this, define $\hat \s^k := ((\a^\top \hat \x^k) / a_k) \e_k$, $\t^k := (h(\a^\top \hat \x^k)) \e_k $ and $\hat \w^k := \e_k$. Recall that $k\in\cK_{>}$ and so $x_k\neq0$ and by definition $\hat\z^k=\z$. Then, as $(\tau, \x, \z) \in \cT$ we have $x_k(1-z_k)=0$ which implies $z_k=1$. Thus, we deduce $\hat \w^k\leq \hat \z^k$. Then, by construction of $\hat \s^k, \hat \w^k, \hat \t^k$, we immediately conclude that the following conditions  
	\begin{align*}
		(\hat \x^k, \hat \s^k, \hat \z^k, \hat \w^k, \hat \t^k) \in \widehat \cT, ~ \a^\top \hat \x^k = \a^\top \hat \s^k, ~\ones^\top \hat \t^k \leq \hat \tau^k
	\end{align*}
	are satisfied.
	Furthermore, we have $(\tau, \x, \z) = \sum_{k \in \cK_{>}} \lambda_k (\hat \tau_k, \hat \x_k, \hat z_k),$ where $\lambda_k = (a_k x_k) / (\a^\top \bm \delta)$ (as we assumed that $\a^\top \x \neq 0$, the set $\cK_>$ is nonempty and $\a^\top \gd \neq 0$, recall also the definition of $\cK_>$ implies $(a_k x_k) / (\a^\top \bm \delta)>0$ for all $k$) with $\lambda_k > 0$ and $\sum_{k \in \cK_>} \lambda_k = 1$. 
	Thus, the point $(\tau, \x, \z) \in \conv(\overline \cT_s)$, which establishes $\cT\subseteq \conv(\overline \cT_s)$, as desired. 
	Since $\conv(\cT)$ is the smallest convex set containing $\cT$, we conclude $\conv(\cT) \subseteq \conv(\overline \cT_s)$. Taking closure of both side proves that $\cl\conv(\cT) \subseteq \cl\conv(\overline \cT_s)$.
	
	$\bm{(\supseteq)}$ Let $(\tau, \x, \z) \in \overline \cT_s$. Then, there exists $\s \in \cX$ with $s_i \leq x_i$ for all $i \in \cI$ and $\w \in \set{0,1}^n$ with $\w \leq \z$, $s_i(1 - w_i) = 0$ for all $i \in [d]$, and $\ones^\top \w \leq 1$ such that $\a^\top \x = \a^\top \s$ and $\sum_{i \in [d]} h(a_i s_i) \leq \tau$. We will next show that $(\tau, \x, \z) \in \cT + \cR$.
	From $\w \in \set{0,1}^n$ and $\ones^\top \w \leq 1$ we deduce that at most one element of $\w$ is equal to 1. Then, through the constraints  $s_i(1 - w_i) = 0$ for all $i \in [d]$ and $\w \leq \z$, we deduce that $\s$ has at most one nonzero element and $s_i(1 - z_i) = 0$ for all $i \in [d]$ as well. Hence, the constraint $\sum_{i \in [d]} h(a_i s_i) \leq \tau$ implies that $(\tau, \s, \z) \in \cT$. Moreover, the vector $\bar \x = \x - \s$ satisfies $\bar \x \in \cX$ (as $s_i \leq x_i$ for all $i \in \cI$) and $\a^\top \bar \x = 0$ (as $\a^\top \x = \a^\top \s$). Thus, we have $(0, \bar \x, \bm 0) \in \cR$. Therefore, $(\tau, \x, \z) = (\tau, \s, \z) + (0, \bar \x, \bm 0) \in \cT + \cR$, as required. This implies that $\cT + \cR \supseteq \overline \cT_s$. 
	This completes the proof.
	\qed
\end{proof}

We next show that $\conv(\overline \cT_s)$ can be obtained from $\conv(\widehat \cT)$. 
\begin{lemma}
	\label{lem:conv}
	Under Assumption~\ref{asmp:nonnegative}, we have
	\begin{align*}
		\conv(\overline \cT_s) = \set{(\tau, \x, \z): \exists \s, \w, \t \text{~s.t.~} 
			\begin{array}{l}
				(\x, \s, \z, \w, \t) \in \conv(\widehat \cT) \\ [1ex]
				\ones^\top \t = \tau,~ \a^\top \s = \a^\top \x
			\end{array}
		},
	\end{align*}
	where $\widehat \cT$ is as defined in~\eqref{eq:T:hat}.
\end{lemma}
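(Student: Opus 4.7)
The plan is to perform the change of variables $\u = \x - \s$, under which $\widehat \cT$ decouples into a Cartesian product and the linking constraint $\a^\top \s = \a^\top \x$ becomes simply $\a^\top \u = 0$. This reduces Lemma~\ref{lem:conv} to an application of Lemma~\ref{lem:tools}\,\ref{lem:separable} together with the standard facts that convex hulls commute with linear maps, projections, and Cartesian products.

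In the coordinates $(\u, \s, \z, \w, \t)$ with $\x = \u + \s$, every defining constraint of $\widehat \cT$ involves either $\u$ alone or $(\s, \z, \w, \t)$ alone: the constraint $s_i \leq x_i$ for $i \in \cI$ becomes $u_i \geq 0$, the constraint $\s \in \cX$ remains as $s_i \geq 0$ for $i \in \cI$, and together these imply $x_i = u_i + s_i \geq 0$, so the constraint $\x \in \cX$ becomes redundant. Thus $\widehat \cT = \cX \times \cT_1'$ in these coordinates, where $\cT_1' := \set{(\s, \z, \w, \t) : \s \in \cX,\ h(a_i s_i) \leq t_i,\ s_i(1-w_i)=0,\ (\w, \z) \in \Omega}$. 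Since $\cX$ is convex, $\conv(\widehat \cT) = \cX \times \conv(\cT_1')$, which in the original coordinates reads: $(\x, \s, \z, \w, \t) \in \conv(\widehat \cT)$ if and only if $(\x - \s) \in \cX$ and $(\s, \z, \w, \t) \in \conv(\cT_1')$.

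Under the same substitution, the constraint $\a^\top \s = \a^\top \x$ becomes $\a^\top \u = 0$. Define $\cU' := \cX \cap \set{\u : \a^\top \u = 0}$, which is convex and nonempty (as $\bm 0 \in \cU'$), and $\cT_2' := \set{(\tau, \s, \z) : \exists\, \w, \t \text{~s.t.~} (\s, \z, \w, \t) \in \cT_1',\ \ones^\top \t = \tau}$. Then $\overline \cT_s = \Phi(\cU' \times \cT_2')$, where $\Phi$ is the linear map $(\u, \tau, \s, \z) \mapsto (\tau, \u + \s, \z)$. Since convex hulls commute with linear maps and Cartesian products and $\cU'$ is convex, $\conv(\overline \cT_s) = \Phi(\cU' \times \conv(\cT_2'))$. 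Finally, applying Lemma~\ref{lem:tools}\,\ref{lem:separable} (treating $(\s, \z, \w)$ as $\gm$ and $\t$ as the variable eliminated via $\ones^\top \t = \tau$) and then projecting out $\w$ (using that projection commutes with convex hull) yields $\conv(\cT_2') = \set{(\tau, \s, \z) : \exists\, \w, \t \text{~s.t.~} (\s, \z, \w, \t) \in \conv(\cT_1'),\ \ones^\top \t = \tau}$. Substituting back through $\Phi$ and $\u = \x - \s$ recovers exactly the right-hand side of Lemma~\ref{lem:conv}.

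The main technical step is the Cartesian product decomposition in the second paragraph: one must carefully verify that every constraint of $\widehat \cT$ is correctly split between $\u$ and $(\s, \z, \w, \t)$ after the substitution, with the apparent $\u$-$\s$ coupling from $\x \in \cX$ becoming implicit in the nonnegativity of $u_i$ and $s_i$ on $\cI$. Once this structural observation is in hand, the rest of the argument reduces cleanly to the already-established tools, bypassing the need to balance per-piece violations of the linear constraint $\a^\top \x = \a^\top \s$ in a Carathéodory decomposition.
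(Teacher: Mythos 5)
Your proof is correct, and it takes a genuinely different route from the paper's. The paper proves the key identity $\conv(\cU \cap \cV_1) = \conv(\cU)\cap\cV_1$ (with $\cV_1 = \{\a^\top\x = \a^\top\s\}$) by taking a Carath\'eodory decomposition of a point in $\conv(\cU)\cap\cV_1$ and then running an iterative rounding scheme: it pairs an index $j$ with $\a^\top\s^j < \a^\top\x^j$ against an index $l$ with the opposite strict inequality, replaces $\x^j$ by $\s^j$, and dumps the excess $(\lambda_j/\lambda_l)(\x^j - \s^j)$ onto $\x^l$, checking at each step that feasibility ($0 \le s^l_i \le \hat x^l_i$ for $i\in\cI$) is preserved; after at most $q$ repairs all pieces satisfy the linear constraint. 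Your observation is that the substitution $\u = \x - \s$ makes this repair unnecessary: the constraints $\x\in\cX$, $\s\in\cX$, $s_i \le x_i\ (i\in\cI)$ of $\widehat\cT$ are jointly equivalent to $\u\in\cX$ and $\s\in\cX$, so $\widehat\cT$ is a Cartesian product of a convex block in $\u$ and a block in $(\s,\z,\w,\t)$, and the linking constraint $\a^\top\s=\a^\top\x$ lives entirely in the convex $\u$-block. Everything then follows from the commutation of $\conv$ with invertible linear maps, products, and projections, plus Lemma~\ref{lem:tools}\,\ref{lem:separable} for the $\ones^\top\t=\tau$ coupling — exactly as the paper already does for $\cV_2$. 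The two arguments exploit the same structural fact (the slack $\x-\s$ is sign-constrained only where $\s$ is, and is otherwise free), but yours packages it once as a coordinate change rather than as an invariant maintained across $O(q)$ repair steps, which is shorter and arguably more transparent; the paper's rounding technique is the more generic tool and would survive in settings where no decoupling change of variables exists.
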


\begin{proof} 
	Let $\cU := \R \times \widehat \cT,~ \cV_1 := \{(\tau, \x, \s, \z, \w, \t\,) \in \R^{5d+1} : \a^\top \x = \a^\top \s \}$, and $\cV_2 := \{(\tau, \x, \s, \z, \w, \t\,) \in \R^{5d+1} : \ones^\top \t \leq \tau \}$. By definition, we have 
	\begin{align*}
		\conv(\overline \cT_s) = \conv(\Proj_{\tau, \x, \z} (\cU \cap \cV_1 \cap \cV_2)) = \Proj_{\tau, \x, \z}(\conv (\cU \cap \cV_1 \cap \cV_2)).
	\end{align*}
	Hence, the claim will follow if we prove that $\conv(\cU \cap \cV_1 \cap \cV_2) = \conv(\cU) \cap \cV_1 \cap \cV_2$.
	From Lemma~\ref{lem:tools}\ref{lem:separable}, we have $\conv(\cU \cap \cV_1 \cap \cV_2) = \conv(\cU \cap \cV_1) \cap \cV_2$. Therefore, in the sequel we will show that $\conv(\cU \cap \cV_1) = \conv(\cU) \cap \cV_1$.
	
	As $\cV_1$ is convex, it is sufficient to show $\conv(\cU) \cap \cV_1 \subseteq \conv(\cU \cap \cV_1)$.
	Take a point $(\bar \tau, \bar \x, \bar \s, \bar \z, \bar \w, \bar \t) \in \conv(\cU) \cap \cV_1$. Since this point is in $\cV_1$, we have $\a^\top \bar \x = \a^\top \bar \s$. Additionally, as the point is in $\conv(\cU)$, by Carath\'{e}odory's theorem, we have 
	$(\bar \tau, \bar \x, \bar \s, \bar \z, \bar \w, \bar \t) = \sum_{k \in [q]}  \lambda_k (\tau^k, \x^k, \s^k, \z^k, \w^k, \t^k)$
	for some finite $q$, $(\tau^k, \x^k, \s^k, \z^k, \w^k, \t^k) \in \cU$, and $\lambda_k > 0$ with $\sum_{k \in [q]} \lambda_k = 1$. Define the~sets 
	\begin{align*}
		\cK_{<} &:= \set{k \in [q]: ~ \a^\top \s^k < \a^\top \x^k }, ~ \cK_{=} := \set{k \in [q]: \a^\top \s^k = \a^\top \x^k }, \\
		&\qquad\qquad\qquad \cK_{>} := \set{k \in [q]: \a^\top \s^k > \a^\top \x^k }.
	\end{align*}
	We consider two possible scenarios.
	
	\textbf{(i)} If $|\cK_{=}| = q$, then $|\cK_{<}| = |\cK_{>}| = 0$ and $(\tau^k, \x^k, \s^k, \z^k, \w^k, \t^k) \in \cV_1$ for every $k \in [q]$. Thus, $(\tau^k, \x^k, \s^k, \z^k, \w^k, \t^k) \in \cU \cap \cV_1$ for every $k \in [q]$, which implies that $(\bar \tau, \bar \x, \bar \s, \bar \z, \bar \w, \bar \t) \in \conv(\cU \cap \cV_1)$. 
	
	\textbf{(ii)} If $|\cK_{=}| < q$, then $|\cK_{<}| > 0$ and $|\cK_{>}| > 0$ because $\a^\top \bar \x = \a^\top \bar \s$. In the following we will use a rounding scheme that iteratively replace the points $(\tau^k, \x^k, \s^k, \z^k, \w^k, \t^k)$ with new points $(\hat \tau^k, \hat \x^k, \hat \s^k, \hat \z^k, \hat \w^k, \hat \t^k)$.
	Pick an index $j \in \cK_{<}$ and an index $l \in \cK_{>}$, and consider the following construction 
	\begin{align*}
		(\hat \tau^k, \hat \x^k, \hat \s^k, \hat \z^k, \hat \w^k, \hat \t^k) = \left\{
		\begin{array}{l@{\quad}l} 
			(\tau^j, \s^j, \s^j, \z^j, \w^j, \t^j) & k = j \\ [1ex]
			(\tau^l, \x^l + (\lambda_j / \lambda_l) (\x^j - \s^j), s^l, \z^l, w^l) & k = l \\ [1ex]
			(\tau^k, \x^k, \s^k, \z^k, \w^k, \t^k) & k \notin \set{j, l}.
		\end{array} \right.
	\end{align*}
	By construction, we have 
	\begin{align*}
		\sum_{k \in [q]}  \lambda_k (\tau^k, \x^k, \s^k, \z^k, \w^k, \t^k) = \sum_{k \in [q]}  \lambda_k (\hat \tau^k, \hat \x^k, \hat \s^k, \hat \z^k, \hat \w^k, \hat \t^k).
	\end{align*}
	Moreover, from $(\tau^j, \x^j, \s^j, \z^j, \w^j, \t^j) \in \cU$, we deduce $0 \leq s^j_i \leq x^j_i$ for every $i \in \cI$. Then, $0 \leq s^l_i \leq x^l_i \leq x^l_i + (\lambda_j / \lambda_l) (x^j_i - s^j_i) = \hat x^l_i$ for every $i \in \cI$. Thus, we conclude that $(\hat \tau^k, \hat \x^k, \hat \s^k, \hat \z^k, \hat \w^k, \hat \t^k) \in \cU$ for every $k \in [q]$.
	Defining the index sets $\hat \cK_{=} = \set{k \in [q]: \a^\top \hat \s^k = \a^\top \hat \x^k}$, one can show that $| \hat \cK_{=} | > | \cK_{=} |$ because the index $j$ now satisfies the condition $\a^\top \hat \s^j = \a^\top \hat \x^j $. We next replace the points $(\tau^k, \x^k, \s^k, \z^k, \w^k, \t^k)$ with the new points $(\hat \tau^k, \hat \x^k, \hat \s^k, \hat \z^k, \hat \w^k, \hat \t^k)$, and repeat the same rounding scheme. In this way, after at most $q$ iterations, we will obtain a set of points $(\hat \tau^k, \hat \x^k, \hat \s^k, \hat \z^k, \hat \w^k, \hat \t^k)$, $k \in [q]$, for which $|\hat \cK_{=}| = q$. Hence, $(\hat \tau^k, \hat \x^k, \hat \s^k, \hat \z^k, \hat \w^k, \hat \t^k) \in \cU \cap \cV_1$ for every $k \in [q]$ and we conclude that $(\bar \tau, \bar \x, \bar \s, \bar \z, \bar \w, \bar \t) \in \conv(\cU \cap \cV_1)$. This completes the proof.
	\qed
\end{proof}

Given Lemmas~\ref{lem:epi:perspective} and~\ref{lem:conv}, and Proposition~\ref{prop:nonnegative}, we are now ready to prove Theorem~\ref{thm:nonnegative}.
\begin{proof} \!\emph{of Theorem~\ref{thm:nonnegative}~}
	Recall the definition of $\widehat \cT$ and $\Omega$. By letting 
	\begin{align*}
		\begin{array}{c}
			\gb_i := (a_i s_i, x_i), ~ \bm \gamma := \t, ~ \gd := (\w, \z), ~ \Delta := \Omega, \\ [1ex]
			\bC_i = \begin{bmatrix} 0 & \mathds{1}_{\cI}(i) \\[1ex]  \mathds{1}_{\cI}(i) / a_i & 0 \\[1ex] -\mathds{1}_{\cI}(i) / a_i & \mathds{1}_{\cI}(i) \end{bmatrix}, ~ \C_i = \R_+^3, ~ \forall i \in [d],
		\end{array}
	\end{align*}
	we can represent the set $\widehat \cT$ as an instance of the set $\cW$ defined as in~\eqref{eq:W}. Then, Proposition~\ref{prop:separable} yields
	\begin{align*}
		\cl\conv(\widehat \cT) \!=\!
		\set{ (\x, \z, \s, \w, \t) \!\in\! \R^{d+d+d+d+d} \!: 
			\begin{array}{l}
				h^\pi(a_i s_i, w_i) \leq t_i, ~ \forall i \in [d] \\ [1ex]
				\s, \x \in \cX, ~ s_i \leq x_i, \, \forall i \in \cI \\ [1ex]
				(\w, \z) \in \conv(\Omega)
			\end{array}
		}.
	\end{align*}
	In the following we characterize $\cl\conv(\cT)$ in terms of $\cl\conv(\widehat \cT)$. From Proposition~\ref{prop:nonnegative} and Lemma~\ref{lem:conv}, we deduce	that $\cl\conv(\cT)$ coincides with
	\begin{align*}
		\cl \Big( \Proj_{\tau, \x, \z} \big( \{(\tau, \x, \z, \s, \w, \t) \in \R \times \conv(\widehat \cT) : \ones^\top \t = \tau,~ \a^\top \s = \a^\top \x\} \big) \Big).
	\end{align*}
	By letting 
	\begin{align*}
		\begin{array}{c}
			\gm := (\tau, \x, \z), ~ \bm \eta := (\tau, \x, \z, \s, \w, \t), ~ \cU := \R \times \conv(\widehat \cT), \\ [1ex]
			\bA := [\bI_{1+2d}, ~ \bm 0], ~ \bB := \begin{bmatrix} -1 & \bm 0^\top & \bm 0^\top & \bm 0^\top & \bm 0^\top & \ones^\top \\ 0 & \a^\top & \bm 0^\top & -\a^\top & \bm 0^\top & \bm 0^\top \end{bmatrix}^\top, ~ \bm b := \bm 0,
		\end{array}
	\end{align*}
	we observe that 
	\begin{align*}
		\Proj_{\tau, \x, \z} & \big( \{(\tau, \x, \z, \s, \w, \t) \in \R \times \conv(\widehat \cT) : \ones^\top \t = \tau,~ \a^\top \s = \a^\top \x\} \big) \\
		&= \{ \gm : \exists \bm \eta \in \cU \text{ s.t. } \bA \bm \eta = \gm, ~ \bB \bm \eta = \bm b \}
	\end{align*}
	as in Lemma~\ref{lem:tools}\,\ref{lem:cl}. Note also that the first requirement of Lemma~\ref{lem:tools}\,\ref{lem:cl} is trivially satisfied as the variable $\tau$ is free to choose from the set $\cU$ and the variable $\x$ is linearly dependent to the variable $\s$. In addition, we have
	\begin{align*}
		&\rec(\{\bm \eta \!\in\! \cl(\cU) \!:\! \bA \bm \eta \!=\! \bm 0, \bB \bm \eta \!=\! \bm b\}) \\
		&=\rec\left( \set{(0, \bm 0, \bm 0, \s, \w, \bm t) \!:\! 
			\begin{array}{l}
				h^\pi(a_i s_i, w_i) \leq t_i, ~ \forall i \in [d] \\ [1ex]
				\s, \in \cX, ~ s_i \leq 0, \, \forall i \in \cI \\ [1ex]
				(\w, \bm 0) \in \conv(\Omega)\\ [1ex]
				\a^\top \s = 0, ~ \ones^\top \t = 0
				\end{array}
				} \right)\\
		&=\rec\left( \set{(0, \bm 0, \bm 0, \s, \bm 0, \bm t) \!:\! 
			\begin{array}{l}
				h^\pi(a_i s_i, 0) \leq t_i, ~ \forall i \notin \cI \\ [1ex]
				s_i =0, \, 0\leq t_i,\, \forall i \in \cI \\ [1ex]
				\a^\top \s = 0, ~ \ones^\top \t = 0
				\end{array}
				} \right),
	\end{align*}
	where the first equation holds by the definition of $\conv(\widehat \cT)$ and the fact that $\bA \bm \eta = \bm 0$ implies $(\tau,\x,\z)=(0,\bm 0,\bm 0)$. The second equation follows from $(\w,\z)\in\conv(\Omega)$, which implies $\bm 0\leq\w\leq\z$ and as $\z=0$ we deduce $\w=0$. Moreover, $\x=0$ implies $0\leq s_i\leq x_i$ for all $i\in\cI$ and so $s_i=0$ for all $i\in\cI$. This also results in $h^\pi(a_i s_i, 0)=h^\pi(0,0)=0\leq t_i$ for all $i\in\cI$.
	Since the function $h$ is nonlinear, proper, lower semicontinuous and convex, the set $\{(u, v): h^\pi(u, 0) \leq v \}$ is a convex closed pointed cone thanks to Lemma~\ref{lem:epi:perspective}. Consequently, the origin is an extreme point of the set, meaning that $\sum_{i \in [d]} (a_i s_i, t_i) = (0, 0)$ only if $s_i = t_i = 0$. Hence, the second requirement of Lemma~\ref{lem:tools}\,\ref{lem:cl} also follows, and we can apply Lemma~\ref{lem:tools}\,\ref{lem:cl} to conclude that
	\begin{align*}
		\cl\conv(\cT) = \set{ (\tau, \x, \z): \exists \s, \w, \t ~\text{~s.t.~}~
			\begin{array}{l}
				(\x, \z, \s, \w, \t) \in \cl\conv(\widehat \cT), \\ [1ex]
				\ones^\top \t = \tau, ~ \a^\top \x = \a^\top \s 
			\end{array}
		}.
	\end{align*}  
	The proof concludes by projecting out the variable $\t$ using the Fourier-Motzkin elimination approach.
	\qed
\end{proof}

We conclude this section by characterizing $\conv(\Omega)$ where $\Omega = \{(\w, \z) \in \set{0,1}^d \times \cZ:~ \ones^\top \w \leq 1, ~\w \leq \z\}$ for some simple integral sets $\cZ$. The proofs of the subsequent results are provided in Appendix~\ref{appendix:A}.
We start with the case when $\cZ$ is defined by a cardinality constraint. In this case, the resulting set $\Omega$ admits an immediate totally unimodular representation.

\begin{lemma}
	\label{conv:Omega:cardinality}
	Suppose $\cZ = \{ \z \in \{0,1\}^d : \ones^\top \z \leq \kappa \}$, where $\kappa \in [d] \backslash \{1\}$. Then
	\begin{align*}
		\conv(\Omega) = \{(\w, \z) \in [0,1]^{d+d}: ~ \ones^\top \w \leq 1, ~\w \leq \z,~ \ones^\top \z \leq \kappa \}.
	\end{align*}
\end{lemma}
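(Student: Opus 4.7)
The plan is to establish the nontrivial inclusion $\{(\w,\z)\in[0,1]^{2d}:\ones^\top\w\leq 1,\w\leq\z,\ones^\top\z\leq\kappa\}\subseteq\conv(\Omega)$ via a total unimodularity argument; the reverse inclusion is immediate, since every defining inequality is satisfied on $\Omega\subseteq\set{0,1}^{2d}$ and is preserved under convex combinations.

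For the nontrivial direction, I would first collect the non-bound constraints as $\bA(\w,\z)^\top\leq\bm b$ with
\begin{align*}
\bA=\begin{pmatrix}\ones^\top & \bm 0^\top\\[0.3ex] \bI_d & -\bI_d\\[0.3ex] \bm 0^\top & \ones^\top\end{pmatrix},\qquad \bm b=\begin{pmatrix} 1\\[0.3ex] \bm 0\\[0.3ex] \kappa\end{pmatrix},
\end{align*}
and then prove that $\bA$ is totally unimodular via the Ghouila-Houri criterion. The candidate row partition places the top row $(\ones^\top,\bm 0^\top)$ alone in one class $R_1$ and puts the remaining $d+1$ rows, namely the $d$ rows $(\e_i^\top,-\e_i^\top)$ for $i\in[d]$ together with the bottom row $(\bm 0^\top,\ones^\top)$, in the other class $R_2$. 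A column-by-column check then shows that the signed row-sum difference between $R_1$ and $R_2$ is in $\{-1,0,1\}$ for every column: for a $w_j$-column each class contributes $+1$, and for a $z_j$-column the $R_2$ contribution is $-1+1=0$ while $R_1$ contributes $0$. The same property is inherited by every row subset, which is exactly what Ghouila-Houri requires.

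Once $\bA$ is certified to be TU, appending the variable-bound rows $\pm\bI_{2d}$ preserves total unimodularity, and because the full right-hand side is integer-valued, every vertex of the polytope is a $0/1$ vector satisfying all the listed constraints, hence lies in $\Omega$. This yields the desired identity. The only delicate step is the Ghouila-Houri verification, which I expect to be the main (though modest) obstacle of the proof, since it reduces to a short case analysis once the partition is identified; no real technical difficulty is anticipated.
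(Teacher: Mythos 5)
Your proposal is correct and follows essentially the same route as the paper: the paper writes $\Omega$ with exactly the same constraint matrix $\bA$ and the same row partition $\{1\}$ versus $\{2,\dots,d+2\}$, certifying total unimodularity via the Heller--Tompkins two-nonzeros-per-column criterion rather than Ghouila--Houri, and then concludes integrality of the relaxation after appending the bound rows. The difference in TU certificate is immaterial, so the two arguments coincide in substance.
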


We next characterize $\conv(\Omega)$ for the weak and strong hierarchy sets. Unlike the previous case, $\Omega$ does not immediately admit a totally unimodular representation. 
Nonetheless, it turns out that the set $\Omega \backslash \{ \bm 0 \}$ is totally unimodular for weak hierarchy constraints. Using Lemma~\ref{lem:conv:Omega}\,\ref{lem:conv:Omega:0} in Appendix~\ref{appendix:A}, which provides a description of $\conv(\Omega)$ based on $\conv(\Omega \backslash {\bm 0})$, the following lemma analyzes the weak hierarchy constraints.

\begin{lemma}
	\label{conv:Omega:weak:hierarchy}
	Suppose $\cZ=\{\z\in\{0,1\}^d:~z_d\leq\sum_{i\in[d-1]} z_i\}$. Then,
	\begin{align*}
		\textstyle
		\conv(\Omega)= \set{(\w,\z) \in[0,1]^{d+d} :~
			\begin{array}{l}
				\ones^\top \w \leq 1, ~ \w \leq \z, ~z_d \leq \sum_{i\in[d-1]} z_i \\ [1ex]
				\ones^\top \w \leq \sum_{i\in[d-1]} z_i
			\end{array}
		}.
	\end{align*}
\end{lemma}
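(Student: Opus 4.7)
The strategy is to invoke the companion result Lemma~\ref{lem:conv:Omega}\ref{lem:conv:Omega:0} that characterizes $\conv(\Omega)$ in terms of $\conv(\Omega\setminus\{\bm 0\})$. Thus the bulk of the work reduces to finding an integral polyhedral description of $\Omega\setminus\{\bm 0\}$ under the weak-hierarchy assumption, after which the lifted description of $\conv(\Omega)$ can be read off directly.

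First, I would identify $\Omega\setminus\{\bm 0\}$ explicitly. For any $(\w,\z)\in\Omega$ with $(\w,\z)\neq\bm 0$, one has $\z\neq\bm 0$ because $\w\leq\z$; combined with $z_d\leq\sum_{i\in[d-1]}z_i$ and $\z\in\{0,1\}^d$, this forces $\sum_{i\in[d-1]}z_i\geq 1$. Hence
\begin{align*}
\Omega\setminus\{\bm 0\}=\set{(\w,\z)\in\{0,1\}^{2d}:~
\begin{array}{l}
\ones^\top\w\leq 1,~\w\leq\z, \\ [1ex]
z_d\leq\sum_{i\in[d-1]}z_i,~\sum_{i\in[d-1]}z_i\geq 1
\end{array}
}.
\end{align*}
I then show that the continuous relaxation of this system is integral. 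The cleanest route I foresee is a Ghouila-Houri style argument: for any subset of the $2d$ columns, one keeps each coupled pair $(w_i,z_i)$ in the same class to balance the rows $w_i\leq z_i$ to zero, and then separates the pair $(w_d,z_d)$ from the remaining pairs to balance both the hierarchy row $z_d-\sum_{i\in[d-1]}z_i\leq 0$ and the lower-bound row $-\sum_{i\in[d-1]}z_i\leq -1$; the row $\ones^\top\w\leq 1$ is automatically handled by the pairing. An alternative is a direct vertex-splitting argument exhibiting, for any candidate fractional vertex, two integer feasible neighbors whose midpoint it is, thereby contradicting vertex-hood.

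Once $\conv(\Omega\setminus\{\bm 0\})$ is shown to equal this polyhedron, Lemma~\ref{lem:conv:Omega}\ref{lem:conv:Omega:0} (analogous to Lemma~\ref{lem:conv:Delta:1}) converts the single $\geq 1$ inequality $\sum_{i\in[d-1]}z_i\geq 1$ into the coupling $\sum_{i\in[d-1]}z_i\geq\ones^\top\w$, while preserving the homogeneous inequalities ($\w\leq\z$, $z_d\leq\sum_{i\in[d-1]}z_i$, and $\w,\z\geq\bm 0$) and the $\leq 1$ inequalities ($\ones^\top\w\leq 1$ and the upper box bounds); any cross-terms that the lemma generates are either redundant under the other constraints or coincide with an already-listed inequality. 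This produces exactly the stated description of $\conv(\Omega)$. The main obstacle is the total-unimodularity verification in the middle step: the simultaneous presence of a cardinality-type cap on $\w$, the coupling $\w\leq\z$, and the weak-hierarchy constraint on $\z$ prevents a direct appeal to any standard TU template and forces one to check the partition condition on a case-by-case basis.
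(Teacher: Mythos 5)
Your overall architecture is the same as the paper's: describe $\Omega\setminus\set{\bm 0}$ explicitly, prove that its LP relaxation is integral, and finish with Lemma~\ref{lem:conv:Omega}\,\ref{lem:conv:Omega:0}. The first and last steps are fine, but the middle step has a genuine gap. The system you propose to certify as totally unimodular retains the hierarchy row $z_d-\sum_{i\in[d-1]}z_i\le 0$, and with that row the constraint matrix is provably \emph{not} TU: restricted to the columns $(w_1,z_1,w_d,z_d)$, the rows $w_1-z_1\le 0$, $w_d-z_d\le0$, $\ones^\top\w\le1$, and $z_d-\sum_{i\in[d-1]}z_i\le0$ form the submatrix
\begin{align*}
\begin{pmatrix} 1 & -1 & 0 & 0\\ 0 & 0 & 1 & -1\\ 1 & 0 & 1 & 0\\ 0 & -1 & 0 & 1\end{pmatrix},
\end{align*}
which has determinant $-2$. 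Equivalently, Ghouila--Houri fails on this column set: the two coupling rows force $w_1\sim z_1$ and $w_d\sim z_d$, the hierarchy row forces $z_1\sim z_d$, so all four columns must lie in one class, and then the row $\ones^\top\w\le1$ receives $\pm 2$. Independently of this, the specific partition you describe (all pairs $(w_i,z_i)$, $i\in[d-1]$, in one class, $(w_d,z_d)$ in the other) violates the Ghouila--Houri condition for the rows $\ones^\top\w\le1$ and $-\sum_{i\in[d-1]}z_i\le-1$: e.g., selecting only $w_1$ and $w_2$ puts both in the same class and yields a row sum of $2$. These rows are not ``automatically handled by the pairing''; a correct partition must alternate the pairs between the two classes, and this interacts badly with the hierarchy row.

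The polyhedron you wrote down is in fact integral, but only because the hierarchy row is redundant---it is implied by $\sum_{i\in[d-1]}z_i\ge1$ together with $z_d\le1$, over $[0,1]^d$ just as over $\set{0,1}^d$---so your TU certificate fails even though the intermediate claim is true. The repair is exactly what the paper does: delete the hierarchy row from the description of $\Omega\setminus\set{\bm 0}$. The remaining matrix has at most two nonzeros per column, and the Heller--Tompkins row partition, with the $d$ coupling rows $w_i\le z_i$ in one part and the two rows $\ones^\top\w\le1$ and $-\sum_{i\in[d-1]}z_i\le-1$ in the other, certifies total unimodularity immediately (each column $w_i$ has two $+1$'s in different parts; each column $z_i$, $i<d$, has two $-1$'s in different parts; $z_d$ has a single nonzero). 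Applying Lemma~\ref{lem:conv:Omega}\,\ref{lem:conv:Omega:0} then yields the stated description, with the deleted hierarchy inequality reappearing as the cross-term generated by the box row $z_d\le1$, as you correctly anticipated.
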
 

We conclude this section by examining the strong hierarchy constraints. In this case, neither the set $\Omega$ nor $\Omega \backslash \set{\bm{0}}$ admits an immediate representation with totally unimodular matrices. Using Lemma~\ref{lem:conv:Omega}\,\ref{lem:conv:Omega:O_i} in Appendix~\ref{appendix:A}, the following lemma analyzes the strong hierarchy constraints.

\begin{lemma}
	\label{conv:Omega:strong:hierarchy}
	Suppose $\cZ = \{\z \in \{0,1\}^d:~z_d \leq z_i,\,\forall i\in[d-1] \}$. Then,
	\begin{align*}
		\conv(\Omega) \!=\! 
		\set{(\w, \z) \!:\! \exists \tilde \z^0, \dots, \tilde \z^d \text{ s.t. } 
			\begin{array}{l}
				\w \in \R_+^d, ~\tilde \z^0_d \geq 0, ~\ones^\top \w \leq 1, \\ [1ex]
				\z = \tilde \z^0 + \sum_{i \in [d]} \tilde \z^i \\ [1ex]
				\tilde \z^0_d \leq \tilde z^0_j \leq 1 - \ones^\top \w, ~ \forall j \in [d-1] \\ [1ex]
				\tilde z^i_i = w_i, ~ \forall i \in [d-1], \\ [1ex] 
				0 \leq \tilde \z^i_d \leq \tilde z^i_j \leq w_i, ~ \forall i,j \in [d-1] \\ [1ex]
				\tilde z^d_j = w_d, ~ \forall j \in [d]
			\end{array}
		}.
	\end{align*}
\end{lemma}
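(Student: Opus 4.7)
My plan is to derive the extended formulation by applying Lemma~\ref{lem:conv:Omega}\,\ref{lem:conv:Omega:O_i} from the appendix to a disjunctive decomposition of $\Omega$ driven by which (at most one) coordinate of $\w$ is active. For the strong hierarchy set $\cZ$, the constraints $\ones^\top \w \leq 1$ and $\w \leq \z$ together with $\w \in \{0,1\}^d$ force $\w \in \{\bm 0, \e_1, \dots, \e_d\}$, and $w_i = 1$ requires $z_i = 1$. Thus $\Omega = \Omega_0 \cup \bigcup_{i \in [d]} \Omega_i$, where $\Omega_0 = \{(\bm 0, \z) : \z \in \cZ\}$ and $\Omega_i = \{(\e_i, \z) : \z \in \cZ,~ z_i = 1\}$ for $i \in [d]$.

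The next step is to characterize $\conv(\Omega_i)$ for each branch, using the fact that $\conv(\cZ) = \{\z \in [0,1]^d : z_d \leq z_j,\,\forall j \in [d-1]\}$. So $\conv(\Omega_0)$ is obtained by fixing $\w = \bm 0$ and letting $\z \in \conv(\cZ)$; for $i \in [d-1]$, $\conv(\Omega_i)$ is obtained by fixing $\w = \e_i$ and imposing $z_i = 1$ together with $0 \leq z_d \leq z_j \leq 1$ for $j \in [d-1]$; for $i = d$, the constraint $z_d = 1$ together with $z_d \leq z_j$ forces $z_j = 1$ for all $j \in [d-1]$, so $\conv(\Omega_d)$ is the singleton $\{(\e_d, \ones)\}$.

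Then I would invoke the standard convex-hull-of-union representation via scaled variables $\lambda_i \geq 0$ with $\sum_{i=0}^{d} \lambda_i = 1$ and extended variables $\tilde \z^i = \lambda_i \z^i$ (and similarly for scaled copies of $\w$). Because each branch has $\w^i \in \{\bm 0, \e_1, \dots, \e_d\}$, the aggregated identity $\w = \sum_i \lambda_i \w^i$ pins down $\lambda_i = w_i$ for $i \in [d]$ and $\lambda_0 = 1 - \ones^\top \w$. Substituting these into the scaled descriptions of $\conv(\Omega_i)$ reproduces precisely the constraints in the statement: $\tilde z^0_d \leq \tilde z^0_j \leq 1 - \ones^\top \w$ together with $\tilde z^0_d \geq 0$ from the $i = 0$ branch; $\tilde z^i_i = w_i$ and $0 \leq \tilde z^i_d \leq \tilde z^i_j \leq w_i$ for $j \in [d-1]$ from the branches $i \in [d-1]$; and $\tilde z^d_j = w_d$ for all $j \in [d]$ from the singleton branch $i = d$. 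Finally, $\z = \tilde \z^0 + \sum_{i \in [d]} \tilde \z^i$ is just the aggregation of the scaled copies.

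The main obstacle will be verifying the hypotheses of Lemma~\ref{lem:conv:Omega}\,\ref{lem:conv:Omega:O_i} so that the disjunctive extended formulation is exact (no closure operation is required and the projection recovers $\conv(\Omega)$ on the nose), which should follow because each $\Omega_i$ is a bounded polytope, and carefully handling the degenerate branch $i = d$ whose convex hull is a single point and therefore contributes rigid equalities $\tilde z^d_j = w_d$. Beyond these checks, the argument is essentially a substitution of $\lambda_i$ in terms of the coordinates of $\w$.
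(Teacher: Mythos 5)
Your proposal is correct and follows essentially the same route as the paper: decompose $\Omega$ by which coordinate of $\w$ is active (this is exactly Lemma~\ref{lem:conv:Omega}\,\ref{lem:conv:Omega:O_i}), observe that $\conv(\cZ)$ and $\conv(\cZ\backslash\cO_i)$ for $i\in[d-1]$ are given by their continuous relaxations while $\cZ\backslash\cO_d=\set{\ones}$, and substitute $\lambda_i=w_i$, $\lambda_0=1-\ones^\top\w$ into the scaled disjunctive description. The paper's proof is just these integrality observations followed by the citation of Lemma~\ref{lem:conv:Omega}\,\ref{lem:conv:Omega:O_i}, so your "main obstacle" is not really one — that lemma needs no extra hypotheses since all pieces are finite sets.
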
 
\section{Numerical Results}
\label{sec:numerical}
In this section we study the numerical performance of our conic formulations on a nonlinear logistic regression problem with quadratic features. The resulting exponential cone programs are modeled with JuMP~\citep{lubin2015computing} and solved with MOSEK~10 on a MacBook Pro with a $2.80$ GHz processor and $16$GB RAM. In these experiments, we set the time limit to $300$ seconds and the number of threads of the solver to $4$.

In the nonlinear logistic regression problem with quadratic features, for an input data $\bm \phi \in \R^p$, we construct the lifted feature vector 
\[
\a = ( (\phi_k)_{k \in [p]}, (\phi_{k}^2)_{k \in [p]}, (\phi_{k} \phi_{l})_{k,l \in [p]: l > k} ) \in \R^d,
\] 
where $d = p(p+3) / 2$. We denote the coefficients of the nonlinear classifier and its support by the vectors $\x \in \R^d$ and $\z \in \{ 0, 1 \}^d$. With slight abuse of notation, we use the notation $\x = ( (x_k)_{k \in [p]}, (x_{kk})_{k \in [p]}, (x_{kl})_{k,l \in [p]: l > k}) )$ to refer to the elements of $\x$. In a similar fashion, we use the notation $\z = ( (z_k)_{k \in [p]}, (z_{kk})_{k \in [p]}, (z_{kl})_{k,l \in [p]: l > k}) )$ for $\z$. We examine the following sparse logistic regression problem
\begin{align}
	\label{eq:logistic}
	\begin{array}{c@{\quad}l@{\quad}l}
		\displaystyle \min & \displaystyle \frac{1}{N} \sum_{j \in [N]} \log(1 + \exp(-b_j \a_j^\top \x)) + \lambda \sum_{i \in [d]} z_i + \mu \| \x \|_2^2 \\ [4ex]
		\text{s.t.} & \x \in \R_+^d, ~ \z \in \cZ, ~ x_i (1 - z_i) = 0, ~ \forall i \in [d],
	\end{array}
\end{align} 
where $(\a_j, b_j)_{j \in [N]}$ represents the (nonlinear) feature-label pairs constructed from the input vector $(\bm \phi_j)_{j \in [N]}$ in the training data, and $\lambda, \mu \in \R_+$ denote the regularization coefficients. We use the set 
\[
\cZ = \set{ \z \in \{ 0, 1\}^d :~ z_{kk} \leq z_k,~ z_{kl} \leq z_k,~ z_{kl} \leq z_l, ~ \forall k,l \in [p] \text{~s.t.} ~ l > k }
\] 
to capture strong hierarchy constraints.
We consider various reformulations of~\eqref{eq:logistic} using the convex hull results presented in Section~\ref{sec:applications}.
Namely, based on Theorem~\ref{thm:separable}, we introduce the \emph{separable} reformulation as
\begin{align}
	\tag{separable}
	\label{eq:separable:logloss}
	\begin{array}{c@{\quad}l@{\quad}l}
		\displaystyle \min & \displaystyle \frac{1}{N} \sum_{j \in [N]} t_i + \log(2) + \lambda \sum_{i \in [d]} z_i + \mu \sum_{i \in [d]} r_i \\ [4ex]
		\text{s.t.} & \x \in \R_+^d, ~ \z \in \cZ, ~ \bm u, \bm v \in \R^N,  \\[1ex]
		& (r_i, z_i, x_i) \in \cK_{\text{rsoc}}, & \forall i \in [d], \\ [1ex]
		& (v_j, 1, -t_j), (u_j, 1,-b_j \a_j^{\top} \x - t_j ) \in \cK_{\exp}, & \forall j \in [N], \\[1ex]
		& u_j + v_j \leq 2, & \forall j \in [N].
	\end{array}
\end{align}
where $\cK_{\exp} = \{(\xi_1, \xi_2, \xi_3): \xi_1 \geq \xi_2 e^{\frac{\xi_3}{\xi_2}},\, \xi_2 > 0 \} \cup \{(\xi_1, 0, \xi_3):\xi_1 \geq 0,\, \xi_3 \leq 0 \}$ and $\cK_{\text{rsoc}} = \{(\xi_1, \xi_2, \xi_3): \xi_1 \xi_2 \geq \xi_3^2,\, \xi_1, \xi_2 \geq 0 \} $ denote the exponential and the rotated second-order cones, respectively.
Furthermore, based on Theorem~\ref{thm:connected}, we introduce the \emph{$\text{rank}_1$} reformulation as 
\begin{align}
	\tag{$\rank_1$}
	\label{eq:rank-one:logloss}
	\begin{array}{c@{\quad}l}
		\displaystyle \min & \displaystyle \frac{1}{N} \sum_{j \in [N]} t_i + \log(2) + \lambda \sum_{i \in [d]} z_i + \mu \sum_{i \in [d]} r_i \\ [4ex]
		\text{s.t.} & \x, \bm r \in \R_+^d, ~ (\w, \z) \in \Delta_b, ~ \bm u, \bm v, \bm t \in \R^N, \\[1ex]
		& (r_i, z_i, x_i) \in \cK_{\text{rsoc}}, \hspace{12.8em} \forall i \in [d], \\ [1ex]
		& (v_j, w_j, -t_j), (u_j, w_j,-b_j \a_j^{\top} \x - t_j ) \in \cK_{\exp}, \qquad \forall j \in [N], \\ [1ex]
		& u_j + v_j \leq 2, \hspace{15.4em} \forall j \in [N]. 
	\end{array}
\end{align}
where $\Delta_b := \{ (\w, \z) \in  \{0, 1\}^N \times \cZ: w_j \leq \sum_{i \in [d]: a_{ji} \neq 0} z_i, ~ \forall j \in [N] \}$.
Note that Lemma~\ref{lem:conv:Z:strong:hierarchy} implies that the set
\begin{align*}
	\Delta_v = \set{ (\w, \z) \in \R^N \times \R^d: \begin{array}{l} w_j \leq \sum_{k,l \in [p]: l > k,  a_{jk}, a_{jl} \neq 0} (z_k + z_l - z_{kl}), \\ [1ex] w_j \leq \sum_{k \in [p]: a_{jk} \neq 0} z_k, \forall j \in [N]  \end{array}}
\end{align*}
generates valid cuts for the binary set $\Delta_b$.
Finally, based on Theorem~\ref{thm:nonnegative}, we introduce the \emph{$\text{rank}_1^+$} formulation as 
\begin{align}
	\tag{$\rank_1^+$}
	\label{eq:rank-one-plus:logloss}
	\begin{array}{c@{\quad}l}
		\displaystyle \min & \displaystyle \frac{1}{N} \sum_{j \in [N]} t_i + \log(2) + \lambda \sum_{i \in [d]} z_i + \mu \sum_{i \in [d]} r_i \\ [4ex]
		\text{s.t.} & \x, \bm r \in \R_+^d, ~ (\w, \z) \in \Omega_b, ~ \bm u, \bm v, \t, \bm s \in \R^{N \times d},  \\[1ex]
		& (r_i, z_i, x_i) \in \cK_{\text{rsoc}}, \hspace{6.6em} \quad \forall i \in [d], \\ [1ex]
		& \sum_{i \in [d]} a_{ji} s_{ji} = \a_j^\top \x, \hspace{6.5em} \forall j \in [N], \\ [1ex]
		& 0 \leq s_{ji} \leq \mathds{1}_{\{a_{ji} \neq 0\}} \, x_i,  \hspace{6.3em} \forall i \in [d], \forall j \in [N], \!\! \\ [1ex]
		&(v_{ji}, w_{ji}, -t_{ji}) \in \cK_{\exp}, \hspace{5.9em} \forall i \in [d], \forall j \in [N], \!\! \\[1ex]
		& (u_{ji}, w_{ji},-b_j \a_{ji} s_{ji} - t_{ji} ) \in \cK_{\exp}, \hspace{1.2em} \forall i \in [d], \forall j \in [N], \!\! \\[1ex]
		& u_{ij} + v_{ij} \leq 2, \hspace{9.8em} \forall i \in [d], \forall j \in [N]. \!\!\!
	\end{array}
\end{align}
where $\Omega_b \!:=\! \{ (\w, \z) \!\in\! \{0, 1\}^{N \times d} \!\times\!\cZ \!:\! \w \, \ones \!\leq\! \ones, w_{ji} \!\leq\! \mathds{1}_{\{a_{ji} \neq 0\}} \, z_i, \forall i \!\in\! [d], \! \forall j \!\in\! [N] \}$. It is easy to verify that the set 
\begin{align*}
	\Omega_v = \set{ (\w, \z) \in \R^{N \times d} \times \R^d: \begin{array}{l} w_{j,k} + w_{j, l} + w_{j, kl} \leq z_k + z_l - z_{kl}, \\ [1ex] w_{j,k} + w_{j, kk} \leq z_k, \\ [1ex]  \forall j \in [N], \, \forall k,l \in [p] \text{~s.t.~} l > k, a_{jk}, a_{jl} \neq 0 \end{array} }
\end{align*}
generates valid cuts for the binary set $\Omega_b$.

We examine the relaxation quality and branch and bound (B\&B) performance of different reformulations of~\eqref{eq:logistic} in terms of the optimality gap, solution time, and number of B\&B nodes.
Namely, we examine the separable, rank-one, and rank-one-plus relaxations obtained by relaxing the integrality restrictions of the boolean sets involved. For example, $\relx(\Omega_b)$ corresponds to the continuous relaxation of the set $\Omega_b$, etc. Note that all of the additional inequalities obtained from the sets  $\Delta_v$ or $\Omega_v$ are directly included as constraints in the corresponding formulations. Thus, our implementation does not rely on specific MOSEK functionalities like cut generation or adding cuts on the fly, and these additional inequalities are utilized both in the convex relaxations at the root node and throughout the nodes of the branch-and-bound tree.

Inspired by \citep[Section~6]{wei2022convex}, we conduct a numerical experiment in which the input data $(\bm \phi_j)_{j \in [N]}$ is sparse. Specifically, we randomly assign $\phi_{ji}$ to either a point sampled from a standard Gaussian distribution with a mean of zero and variance of one with probability $\pi$ or set it to zero with probability $1 - \pi$, using a threshold value $\pi \in (0, 1]$. We then randomly  generate a true coefficient vector $\x_0 \in [-1, 1]^d$. Using the vector $\x_0$, we finally generate the label $b_j \in \{-1,1\}$ by sampling from a Bernoulli distribution with $\P(b_j = 1 | \a_j) = 1 / (1 + \exp(- \a_j^\top \x_0))$.

We examine the quality of following relaxations:
\begin{itemize}
	\item In \emph{natural relaxation}, we drop the complementary constraints in~\eqref{eq:logistic}. It is easy to see that the relaxed problem is solved by $\z^\star = 0$.
	\item In \emph{separable relaxation}, we replace $\cZ$ in \eqref{eq:separable:logloss} with $\relx(\cZ)$.
	\item In \emph{$\text{rank}_1\!$ relaxation}, we replace $\Delta_b$ in \eqref{eq:rank-one:logloss} with $\relx(\Delta_b)$.
	\item In \emph{$\text{rank}_{1,v}\!$ relaxation}, we replace $\Delta_b$ in \eqref{eq:rank-one:logloss} with $\relx(\Delta_b \cap \Delta_v)$.
	\item In \emph{$\text{rank}_1^+\!$ relaxation}, we replace $\Omega_b$ in \eqref{eq:rank-one-plus:logloss} with $\relx(\Omega_b)$.
	\item In \emph{$\text{rank}_{1,v}^+\!$ relaxation}, we replace $\Omega_b$ in \eqref{eq:rank-one-plus:logloss} with $\relx(\Omega_b \cap \Omega_c)$.
\end{itemize}
Note that \citep[Theorem~1]{wei2022ideal} cannot be applied directly when a complete description for $\conv(\cZ \backslash \{ \bm 0 \})$ is not available. Nonetheless, the suggested valid inequalities from set $\Delta_v$ can still be employed to obtain a convex relaxation. As a result, we propose the subsequent relaxation as a substitute for \citep[Theorem~1]{wei2022ideal}
\begin{align*}
	\label{eq:rank-one:wei}
	\tag{$\rank_{1,w}$}
	\begin{array}{c@{\quad}l}
		\displaystyle \min & \displaystyle \frac{1}{N} \sum_{j \in [N]} t_i + \log(2) + \lambda \sum_{i \in [d]} z_i + \mu \sum_{i \in [d]} r_i \\ [4ex]
		\text{s.t.} & \x, \bm r \in \R_+^d, ~ (\w, \z) \in \relx(\Delta_w), ~ \bm u, \bm v \in \R^{4 \times N}, ~ \bm t \in \R^N, \\[1ex]
		& (r_i, z_i, x_i) \in \cK_{\text{rsoc}}, \hspace{12.9em} \forall i \in [d], \\ [1ex]
		& (v_{kj}, w_{kj}, -t_j), (u_{kj}, w_{kj},-b_j \a_j^{\top} \x - t_j ) \in \cK_{\exp}, ~ \forall j \in [N], \forall k \in [4], \hspace{-3em} \\ [1ex]
		& u_{ij} + v_{ij} \leq 2, \hspace{15.0em} \forall j \in [N], \forall k \in [4], \hspace{-3.2em}
	\end{array}
\end{align*}
where the set 
\begin{align*}
	\Delta_w = \set{ (\w, \z) \in \R^{4 \times N} \times \{0,1\}^d: \begin{array}{l} w_{1j} = 1, ~ w_{2j} = \sum_{i \in [d]: a_{ji} \neq 0} z_i \\ [1ex] w_{3j} = \sum_{k,l \in [p]: l > k,  a_{jk}, a_{jl} \neq 0} z_k + z_l - z_{kl}, \\ [1ex] w_{4j} = \sum_{k \in [p]: a_{jk} \neq 0} z_k, ~ \forall j \in [N] \end{array} }.
\end{align*}
Note that there are two main differences between $\text{rank}_{1,v}$ and $\text{rank}_{1,w}$  formulations. First and foremost, through Theorem~\ref{thm:connected} we have the variables $\w$ in $\text{rank}_{1,v}$ restricted to be binary whereas in $\text{rank}_{1,w}$ they are continuous and indeed are simply explicit functions of other variables. And second, while the strengthening via valid inequalities in both formulations involves the same set of valid inequalities, in $\text{rank}_{1,v}$ this strengthening is done in the lifted space in a linear form, and in $\text{rank}_{1,w}$ it is essentially done in the original space of the variables through nonlinear inequalities.

In the first experiment we set $\lambda = 10^{-2}, \mu = 10^{-4}, p = 50, d = 1325, \pi = 0.1$ and $N \in \{ 10, 50, 100, 200 \}$. 
In Figure~\ref{fig:change:N} we report the optimality gap and solution time for different convex relaxations. In determining the optimality gap, we compare the objective value of the given relaxation against the best known feasible solution for the instance (among the ones found by the B\&B method applied to any of the formulations of~\eqref{eq:logistic} presented in \eqref{eq:separable:logloss}, \eqref{eq:rank-one:logloss} and \eqref{eq:rank-one-plus:logloss}. This solution corresponds to the optimal solution if the time limit has not been reached in the B\&B algorithm, or the best feasible solution reported by MOSEK if the time limit has been reached. 

The results in Figure~\ref{fig:change:N} suggest that the qualities of the convex relaxations based on the \eqref{eq:rank-one-plus:logloss} formulation are the best. In particular, $\text{rank}_1^+\!$ relaxation attains an average gap smaller than $5\%$. Moreover, adding valid inequalities to the sets $\Delta_b$ and $\Omega_b$ significantly improve the quality of the $\text{rank}_1\!$ and $\text{rank}_1^+\!$ relaxations. For example, the $\text{rank}_{1,v}^+\!$ relaxation attains the average gap smaller than $1\%$, which is $5$ times smaller than the gap attained by $\text{rank}_{1}^+\!$. However, adding these valid inequalities comes with a computational downside. Specifically, the optimization problems now involve more constraints, which result in longer solution times. It is also worth to note that the optimality gap of the $\text{rank}_1\!$ relaxations is significantly superior to that of the separable and natural relaxations, albeit at the expense of relatively longer solution times. 

Finally, we highlight that the optimality gap of the continuous relaxations from $\text{rank}_{1,v}\!$ and $\text{rank}_{1,w}\!$, with the latter being inspired by \citep[Theorem~1]{wei2022ideal}, are identical. This is due to the fact that when the variables $\w$ are relaxed to be continuous the projection of $\text{rank}_{1,v}\!$ in the original space leads to precisely the same inequalities as in $\text{rank}_{1,w}\!$. Nonetheless, it takes roughly twice as long to solve the $\text{rank}_{1,w}\!$ relaxation than the $\text{rank}_{1,v}\!$ relaxation. This is expected as the $\text{rank}_{1,w}\!$ relaxation introduces a considerably larger number of constraints and variables compared to the $\text{rank}_{1,v}\!$ relaxation. It is also important to note that a complete implementation of \citep[Theorem~1]{wei2022ideal} requires using a characterization of $\conv(\cZ \backslash \{ \bm 0 \})$ which may possibly involve an exponential number of constraints. In contrast, $\text{rank}_{1,v}\!$ relaxation handles this complexity through the use of binary variables $\w$, making the $\text{rank}_{1,v}\!$ relaxation much more applicable in practice. 

\begin{figure}
	\centering
	\begin{subfigure}[b]{0.95\textwidth}
		\centering
		\includegraphics[width=\textwidth]{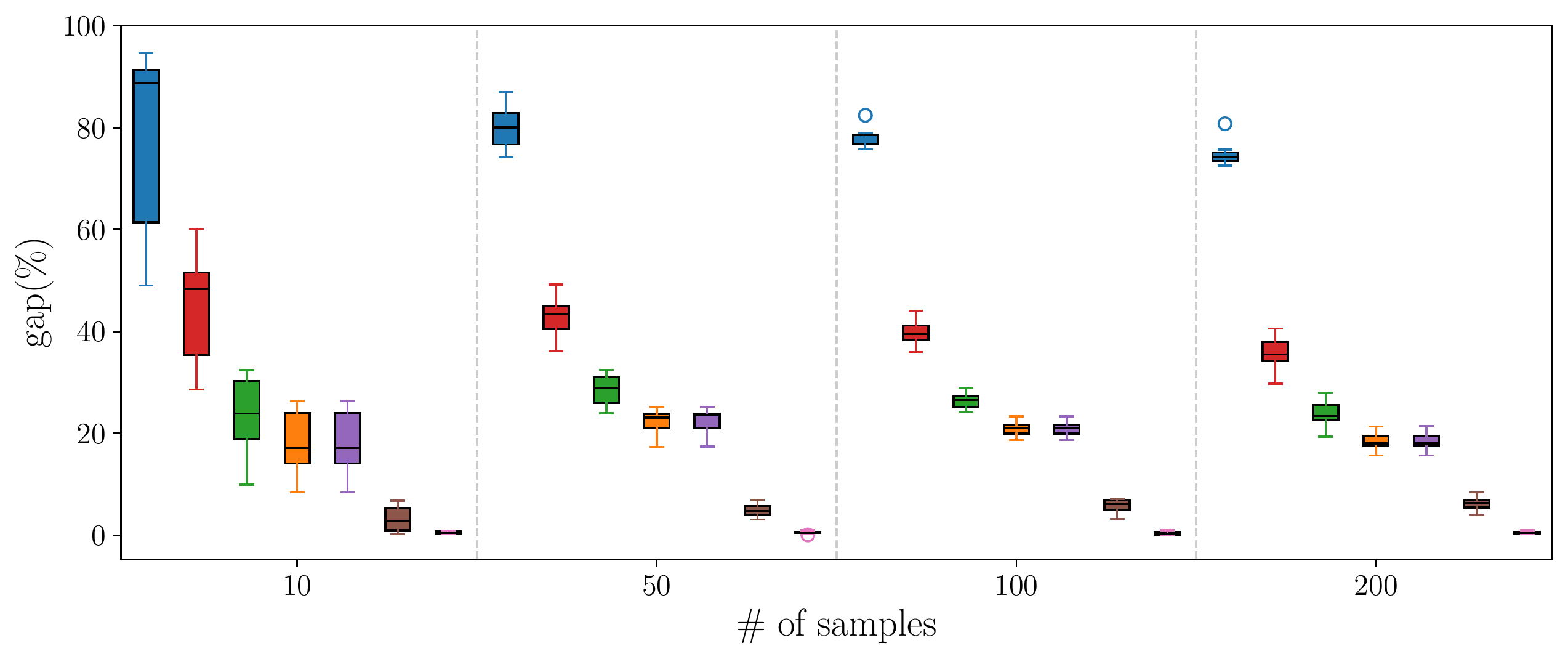}
	\end{subfigure} \\
	\begin{subfigure}[b]{0.95\textwidth}
		\centering
		\includegraphics[width=\textwidth]{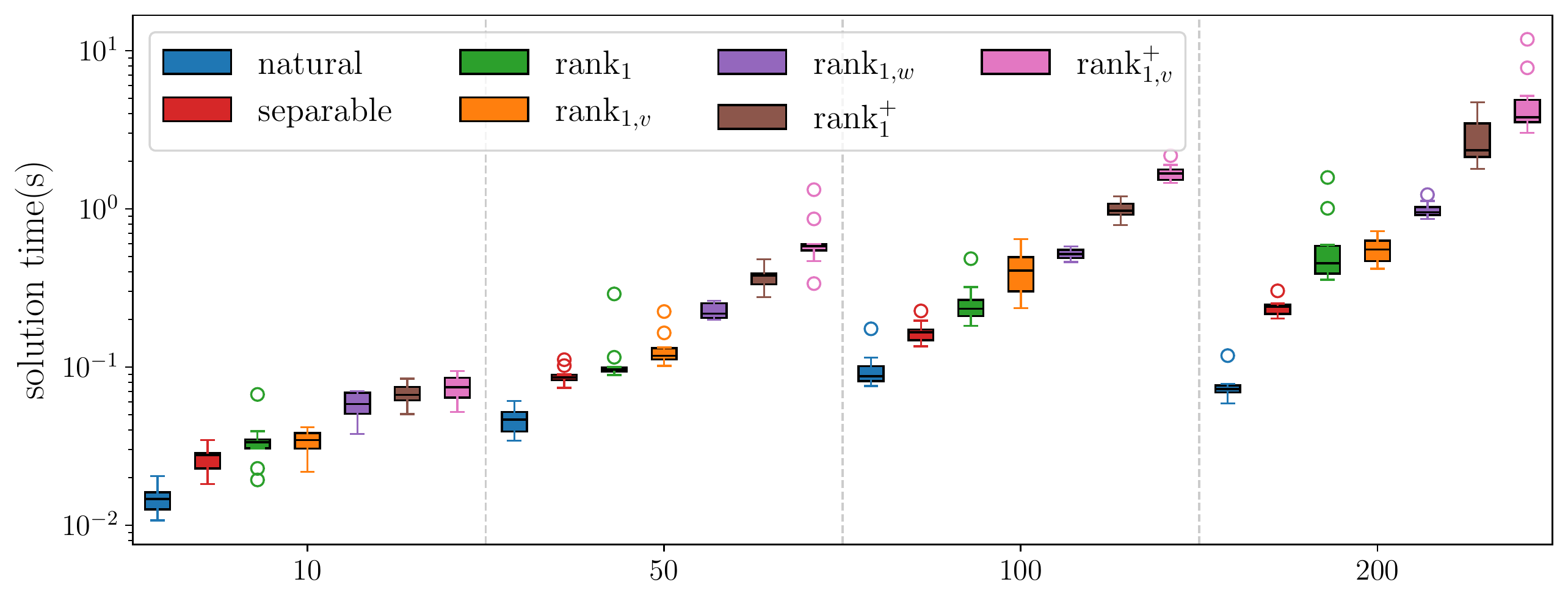}
	\end{subfigure}
	\caption{Comparison of different continuous relaxations for $p = 50$ and $\pi = 0.1$ as $N$ varies.}
	\label{fig:change:N}
\end{figure}

We next examine the B\&B performance of these alternative formulations of~\eqref{eq:logistic} in which we always keep the variables $\z$ as binary but we create two variants for each of the $\text{rank}_{1}\!$ and $\text{rank}_{1}^+\!$ formulations based on whether the variables $\w$ are kept as binary or $\w$ are relaxed to be continuous:
\begin{itemize}
	\item In \emph{separable reformulation}, we consider \eqref{eq:separable:logloss}.
	\item In \emph{$\text{rank}_1\!$ reformulation}, we consider \eqref{eq:rank-one:logloss}.
	\item In \emph{$\text{rank}_{1,r}\!$ reformulation}, we replace $\Delta_b$ in \eqref{eq:rank-one:logloss} with $\relx_{\w}(\Delta_b)$.
	\item In \emph{$\text{rank}_{1,v}\!$ reformulation}, we replace $\Delta_b$ in \eqref{eq:rank-one:logloss} with $\Delta_b \cap \Delta_v$.
	\item In \emph{$\text{rank}_{1,r,v}\!$ reformulation}, we replace $\Delta_b$ in \eqref{eq:rank-one:logloss} with $\relx_{\w}(\Delta_b \cap \Delta_v)$.
	\item In \emph{$\text{rank}_{1,w}\!$ reformulation}, we consider \eqref{eq:rank-one:wei}.
	\item In \emph{$\text{rank}_1^+\!$ reformulation}, we consider \eqref{eq:rank-one-plus:logloss}.
	\item In \emph{$\text{rank}_{1,r}^+\!$ reformulation}, we replace $\Omega_b$ in \eqref{eq:rank-one-plus:logloss} with $\relx_{\w}(\Omega_b)$.
	\item In \emph{$\text{rank}_{1,v}^+\!$ reformulation}, we replace $\Omega_b$ in \eqref{eq:rank-one-plus:logloss} with $\Omega_b \cap \Omega_v$.
	\item In \emph{$\text{rank}_{1,r,v}^+\!$ reformulation}, we replace $\Omega_b$ in \eqref{eq:rank-one-plus:logloss} with $\relx_{\w}(\Omega_b \cap \Omega_c)$.
\end{itemize}
Figure~\ref{fig:change:N:bb} reports the true optimality gap computed using the best known heuristic solution (in our experiments this was usually obtained by a variant of \eqref{eq:rank-one:logloss} formulation) as discussed earlier, the optimality gap reported by the solver, the number of B\&B nodes, and the solution time. 

\begin{figure}
	\centering
	\begin{subfigure}[b]{0.95\textwidth}
		\centering
		\includegraphics[width=\textwidth]{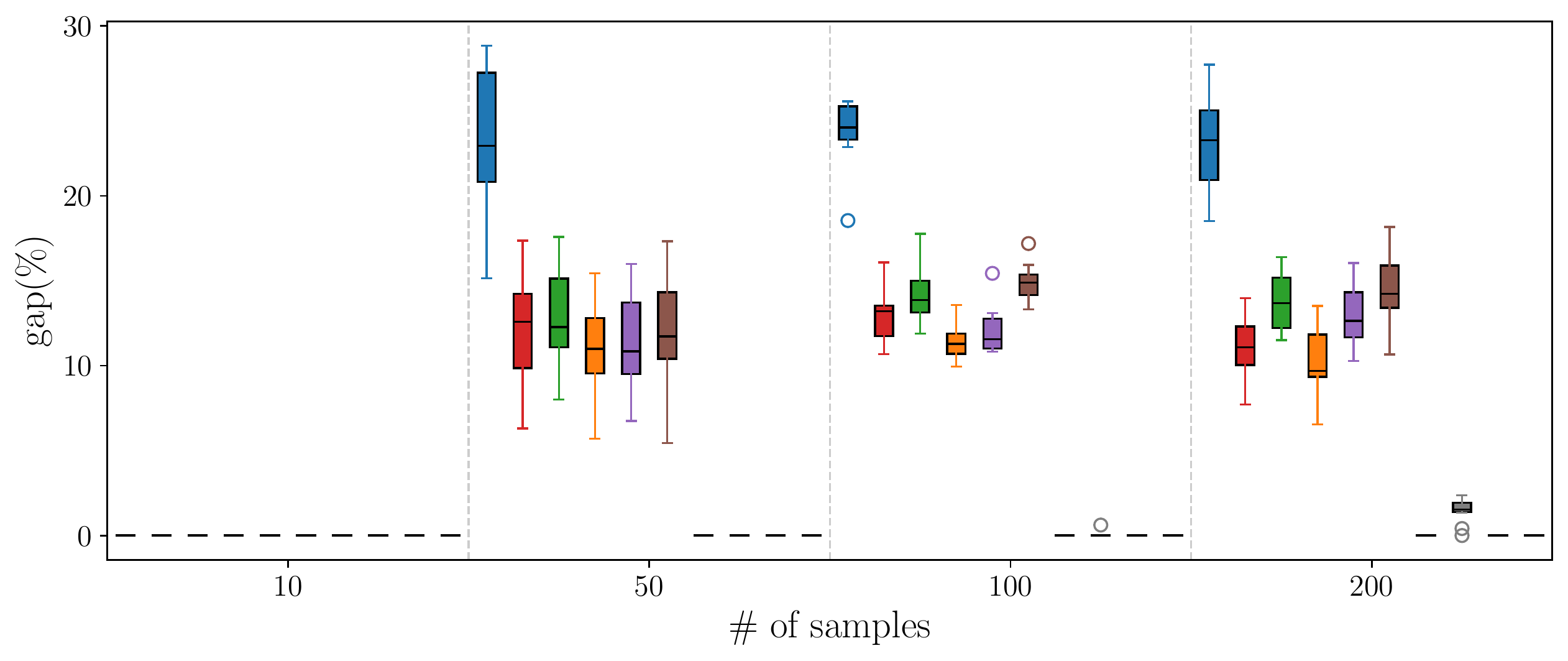}
	\end{subfigure} \\
	\begin{subfigure}[b]{0.95\textwidth}
		\centering
		\includegraphics[width=\textwidth]{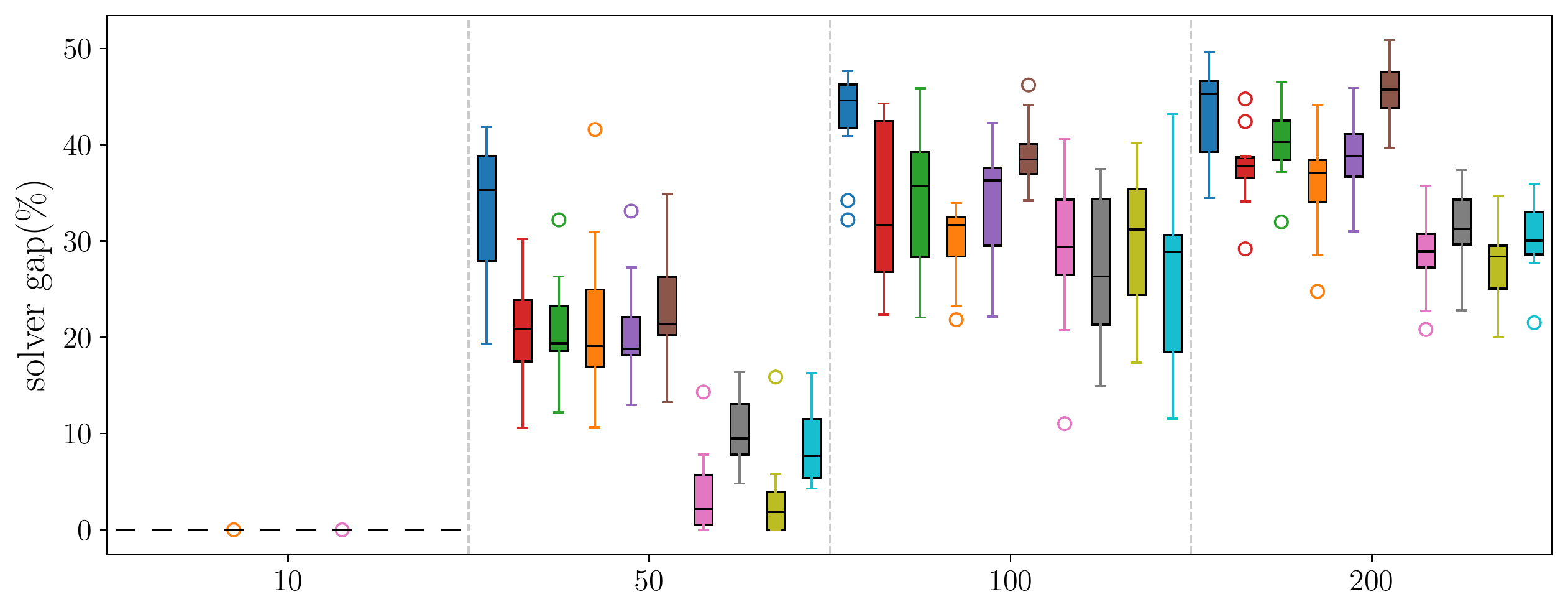}
	\end{subfigure} \\
	\begin{subfigure}[b]{0.95\textwidth}
		\centering
		\includegraphics[width=\textwidth]{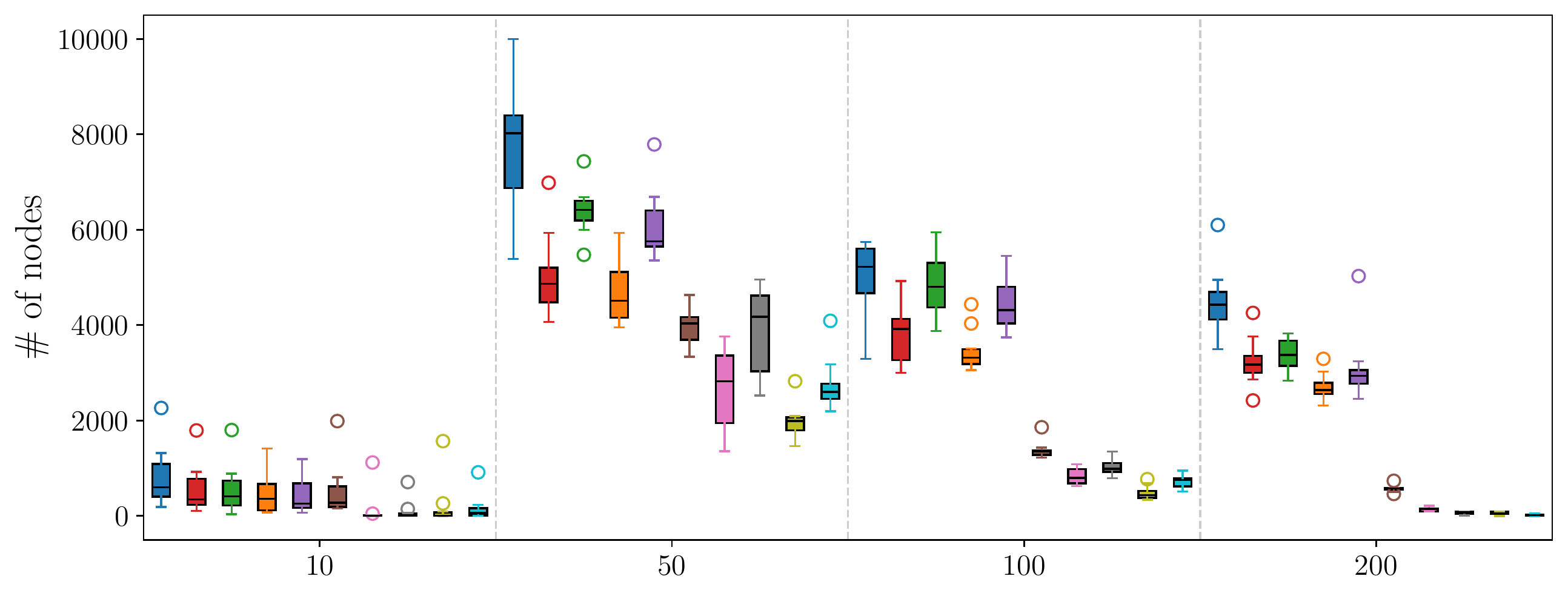}
	\end{subfigure} \\
	\begin{subfigure}[b]{0.95\textwidth}
		\centering
		\includegraphics[width=\textwidth]{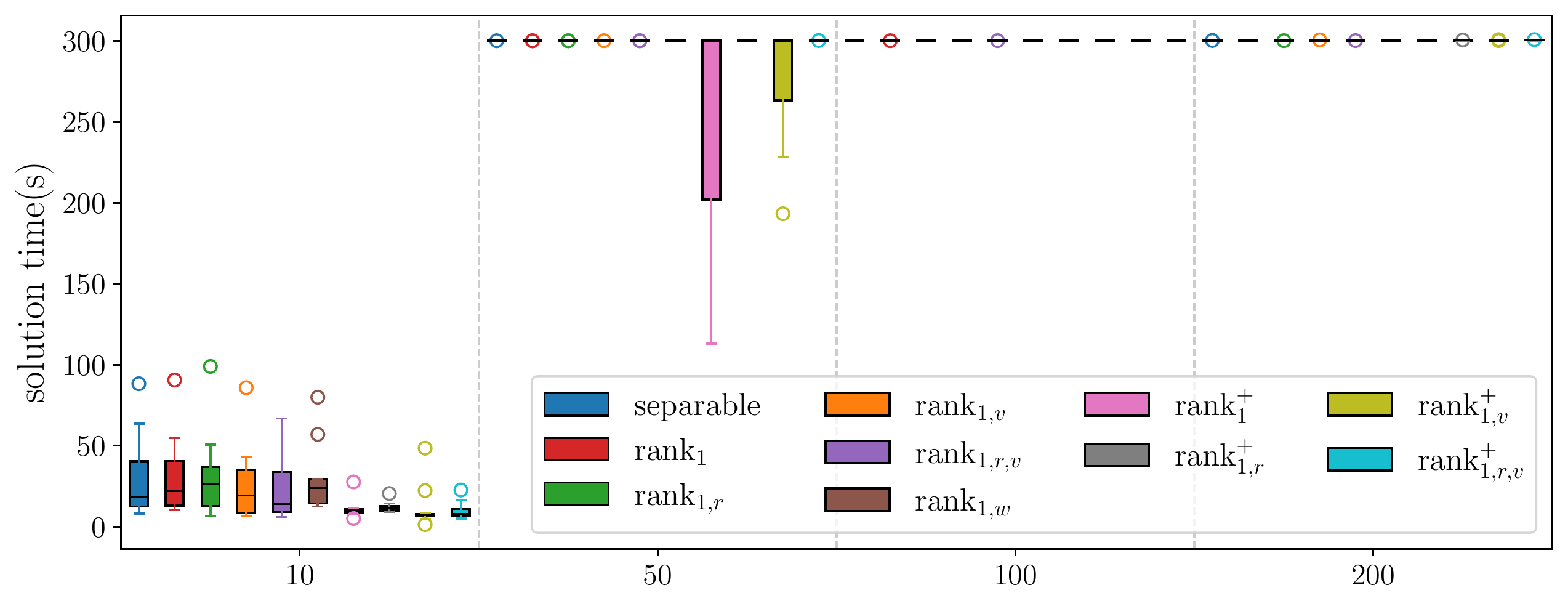}
	\end{subfigure}
	\caption{The B\&B performance for $p = 50$ and $\pi = 0.1$ as $N$ varies.}
	\label{fig:change:N:bb}
\end{figure}

We start by analyzing the true optimality gap and solver gap in Figure~\ref{fig:change:N:bb}. As also seen in Figure~\ref{fig:change:N}, in terms of true optimality gap, the formulations based on \eqref{eq:rank-one-plus:logloss} consistently outperform those based on \eqref{eq:rank-one:logloss} in terms of the true optimality gap. This is primarily because the relaxations based on \eqref{eq:rank-one-plus:logloss} can produce high-quality lower bounds, even though these formulations take longer to solve and thus result in significantly fewer number of nodes explored in B\&B. 
Among $\text{rank}_{1}\!$ and $\text{rank}^+_{1}\!$ variants, the ones where the variables $\w$ are kept as binary perform better in terms of the optimality gaps than the ones where $\w$ are relaxed to be continuous even though having $\w$ binary results in fewer B\&B nodes. This is because when $\w$ are binary, the solver is able to leverage the structure of the sets $\Delta_b$ and $\Omega_b$ to generate further cuts and results in higher quality relaxations. 
Among $\text{rank}_{1}\!$ variants, the performance of $\text{rank}_{1,v}\!$ seems to be best and $\text{rank}_{1,w}\!$ seems to be the worst.  As the continuous relaxation of $\text{rank}_{1,w}$ takes longer to solve compared to those based on \eqref{eq:rank-one:logloss}, its B\&B can explore only a smaller number of nodes, and therefore, results in a worse optimality gap.

When we examine the gaps reported by the solver, we still observe that the same phenomena, but this time the associated gaps reported by the solver are considerably larger than the true optimality gaps. This is because for this class of problems, the heuristic methods utilized by the solver are not very advanced, and the good quality feasible solutions of a formulation are often found at integral nodes in the B\&B tree, so essentially by chance. 
Therefore, the B\&B procedure for the formulations that admit quick to solve node relaxations often results in high quality feasible solutions. Consequently, in the case of expensive lifted formulations such as  \eqref{eq:rank-one-plus:logloss} while the actual optimality gaps are very close to zero, the solver is unable to report this gap due to the inferior quality of the feasible solution found in the associated B\&B tree. To address this issue, BARON introduces the concept of ``relaxation-only constraints'' \citep{sahinidis1996baron,sahinidis2013baron}. These constraints are active during the relaxation step, but are disregarded during the local search step. To the best of our knowledge, MOSEK 10 does not support this feature, and thus in our figures we report both the true optimality gap and the solver reported gap for the formulations tested.

In the second experiment we set $\lambda = 10^{-2}, \mu = 10^{-4}, N = 100, \pi = 0.1$ and $p \in \{ 10, 20, 30, 40, 50 \}$, which translates to $d \in \{ 65, 230, 495, 860, 1325 \}$.
Figure~\ref{fig:change:p} and Figure~\ref{fig:change:p:bb} in Appendix~\ref{appendix:B} compare the quality of different continuous relaxations and also report the performance of the B\&B algorithm, respectively. 
The observations from Figure~\ref{fig:change:p} are very similar to the ones in Figure~\ref{fig:change:N}; thus we omit this discussion for brevity.
Despite the similarity between the observations in Figure~\ref{fig:change:p:bb} and Figure~\ref{fig:change:N:bb}, it is worth noting that when $p=10$, the B\&B performance is slightly different. In particular, when $p=10$, all methods except $\text{rank}_{1,r}^+\!$ and $\text{rank}_{1,r,v}^+\!$ can solve the optimization problem in less than $20$ seconds. This implies that the integer programs may be relatively simple to solve when the dimension is small. As a result, stronger relaxations may not be needed when dealing with small scale instances.

In the last experiment we set $\lambda = 10^{-2}, \mu = 10^{-4}, p = 50, d = 1325, N = 100$ and $\pi \in \{ 0.1, 0.3, 0.5, 0.7, 0.9 \}$; see
Figure~\ref{fig:change:prob} and Figure~\ref{fig:change:prob:bb} in Appendix~\ref{appendix:B} for the quality of different continuous relaxations and the B\&B performance. In all of these instances time limit was reached in B\&B, so the solution time is not reported in Figure~\ref{fig:change:prob:bb}.
As the value of $\pi$ increases, we notice that the optimality gap of the $\text{rank}_{1}\!$ relaxation gets closer to that of separable relaxation.  
This is expected as the binary variable $w_j$ models whether $\a_j^\top \x = 0$ or not. When $\pi$ is large, the probability of such event is low. As a result, $w_j$ is assigned a value of $1$ with high probability, which make the $\text{rank}_{1}\!$ relaxations much less effective.
As a final observation, we note that the value of $\pi$ seems to not affect the quality of the $\text{rank}_{1}^+\!$ relaxations; in particular these relaxations continue to be of high quality even for high values of $\pi$.

\paragraph{Acknowledgements} This research is supported by Early Postdoc Mobility Fellowship SNSF grant P2ELP2\_195149 and AFOSR grant FA9550-22-1-0365.

\bibliographystyle{myabbrvnat}
\bibliography{mybibfile}

\newpage
\renewcommand\thesection{\Alph{section}}
\setcounter{equation}{0}
\setcounter{section}{0}
\renewcommand{\theequation}{A.\arabic{equation}}
\setcounter{lemma}{0}
\renewcommand{\thelemma}{A.\arabic{lemma}}
\setcounter{proposition}{0}
\renewcommand{\theproposition}{A.\arabic{proposition}}

\section{Additional Proofs}
\label{appendix:A}

\begin{proof}\!\emph{of Lemma~\ref{lem:tools}~}
	As for assertion~\ref{lem:separable}, let $\overline \cU \!:=\! \R \times \cU$ and $\overline \cV := \{ (\tau, \gm, \t \hspace{0.1em}) : \ones^\top \t = \tau \}$. 
	By construction, $\cV$ is the projection of $\overline \cU \cap \overline \cV$ onto $(\tau, \gm)-$space. Hence, $\conv(\cV) = \conv(\Proj_{\tau, \gm}(\overline \cU \cap \overline \cV)) = \Proj_{\tau, \gm}(\conv(\overline \cU \cap \overline \cV))$. Therefore, in the remainder of the proof we characterize convex hull of $\overline \cU \cap \overline \cV$.
	As $\overline \cV$ is convex, to prove $\conv(\overline \cU\cap\overline \cV) = \conv(\overline \cU)\cap\overline \cV$ it suffices to show that $\conv(\overline \cU) \cap \overline \cV \subseteq \conv( \overline \cU \cap \overline \cV)$. 
	
	Take a point $(\bar \tau, \bar \gm, \bar \t \hspace{0.1ex})$ from $\conv(\overline \cU) \cap \overline \cV$. Since $(\bar \tau, \bar \gm, \bar \t \hspace{0.1ex}) \in \overline \cV$, we have $\bar \tau = \ones^\top \bar \t$. 
	On the other hand, since $(\bar \tau, \bar \gm, \bar \t \hspace{0.1ex}) \in \conv(\overline \cU)$, we can always express it as a convex combination of a finite number of points in $\overline \cU$, that is, $(\bar \tau, \bar \gm, \bar \t \hspace{0.1ex}) = \sum_{k \in [q]} \lambda_k (\tau^k, \gm^k, \t^k)$ for some finite $q$, 
	$(\tau^k, \gm^k, \t^k) \in \overline \cU$, and $\lambda_k > 0$ for all $k\in[q]$ satisfy $\sum_{k \in [q]} \lambda_k = 1$.
	Notice that there is no restriction on $\tau$ in the definition of $\overline \cU$. Hence, we can take $\tau^k := \ones^\top \t^k$ for all $k \in [q]$. Therefore, we have $(\tau^k, \gm^k, \t^k) \in \overline \cV$ for all $k \in [q]$. 
	We thus deduce $(\tau^k, \gm^k, \t^k) \in \overline \cU \cap \overline \cV$ for all $k \in [q]$.
	Moreover, because $\bar \t = \sum_{k \in [q]} \lambda_k \t^k$, we have $\sum_{k \in [q]} \lambda_k \tau^k = \sum_{k \in [q]} \lambda_k (\ones^\top t^k) = \ones^\top(\sum_{k \in [q]} \lambda_k t^k) = \ones^\top \bar \t = \bar \tau$. Hence, this proves that any point $(\bar \tau, \bar \gm, \bar \t \hspace{0.1ex})$ can be written as a convex combination of points from $\overline \cU \cap \overline \cV$ as desired. 
	Therefore, $\conv(\overline \cU \cap \overline \cV) = \conv(\overline \cU) \cap \overline \cV$, and the claim follows by projecting onto $(\tau, \gm)-$space.
	
	As for assertion~\ref{lem:cl}, note that the set $\cV$ is the affine transformation of the convex set $\widetilde \cU = \{ \bm \eta \in \cU: \bB \bm \eta = \bm b \}$, that is, $\cV = \bA \, \widetilde \cU$. We then have
	\begin{align*}
		\cl(\cV) 
		= \cl(\rint(\cV)) 
		= \cl(\bA(\rint(\widetilde \cU))) 
		&= \cl(\bA(\rint(\cl(\widetilde \cU)))) \\
		&= \cl(\rint(\bA(\cl(\widetilde \cU)))) 
		= \cl(\bA(\cl(\widetilde \cU))).
	\end{align*}
	The first equality holds since a convex set and its relative interior have the same closure. The second equality holds as the linear transformation and the relative interior operators are interchangeable for convex sets (see \citep[Theorem~6.6]{rockafellar1970convex}); thus, we have $\rint(\cV) = \rint(\bA \, \widetilde \cU) = \bA(\rint(\widetilde \cU))$. The third equality holds as the relative interior of a convex set and the relative interior of its closure are the same. The fourth equality follows from the convexity of $\cl(\widetilde \cU)$, which allows us to interchange the linear transformation and the relative interior operators. Finally, the last inequality holds as the closure of the relative interior of a convex set equals the closure of the set. Since we assumed that there exists a point $\bm \eta^\star \in \rint(\cU)$ satisfying the condition $\bB \bm \eta^\star = \bm b$, we have $\cl(\widetilde \cU) = \{ \bm \eta \in \cl(\cU): \bB \bm \eta = \bm b \}$ thanks to \citep[Corrolay~6.5.1]{rockafellar1970convex}. Thus, we showed that
	\begin{align*}
		\cl(\cV) = \cl(\bA(\cl(\widetilde\cV))) = \cl ( \{ \gm : \exists \bm \eta \in \cl(\cU) \text{ s.t. } \bA \bm \eta = \gm, \bB \bm \eta = \bm b \} ).
	\end{align*}
	As we assumed that $\bm \eta = \bm 0$ is the only $\bm \eta \in \rec(\cl(\widetilde\cV))$ with $\bA \bm \eta = \bm 0$, by \citep[Theorem~9.1]{rockafellar1970convex}, we have $\cl(\bA(\cl(\widetilde\cV))) \!=\! \bA(\cl(\widetilde\cV))$. This completes the~proof.\!
	\qed
\end{proof}

\begin{proof}\!\emph{of Lemma~\ref{lem:conv:Z:cardinality}~}
	Note that the matrix
	\begin{align*}
		\bA = \begin{bmatrix}
			-1 & ~ \ones^\top \\[0.5ex]
			0 & ~ \ones^\top
		\end{bmatrix}
	\end{align*}
	is totally unimodular. Recall that if $A$ is totally unimodular, then the matrix $[\bA^\top,~ \bI,~ -\bI]^\top$ is also totally unimodular. 
	By definition, $\Delta_1 = \{(w,\z)\in\{0,1\}^{1+d}:~w\leq \ones^\top \z, ~\ones^\top \z \leq \kappa \}$. 
	As the feasible set of $\Delta_1$ will be represented by the matrix $[\bA^\top,~ \bI,~ -\bI]^\top$ and an integer vector, the set $\Delta_1$ is totally unimodular, and $\conv(\Delta_1)$ is thus given by the continuous relaxation of $\Delta_1$.\!\!\!\!
	\qed
\end{proof}

\begin{proof}\!\emph{of Lemma~\ref{lem:conv:Z:weak:hierarchy}~}
	Note that the matrix
	\begin{align*}
		\bA = \begin{bmatrix}
			1 & -\ones_{d-1}^\top & 0 \\[0.5ex]
			0 & -\ones_{d-1}^\top & 1
		\end{bmatrix}
	\end{align*}
	is totally unimodular as every square submatrix of $A$ has determinant $0$, $+1$, or $-1$.
	Moreover, it is easy to see that 
	\begin{align*}
		\Delta_1 & \textstyle= \{(w,\z) \in \{0,1\}^{1+d}:~ w \leq \ones^\top \z, ~z_d \leq \sum_{i\in[d-1]} z_i\} \\
		& \textstyle= \{(w,\z) \in \{0,1\}^{1+d}:~ w \leq \sum_{i\in[d-1]} z_i, ~z_d \leq \sum_{i\in[d-1]} z_i\},
	\end{align*}
	which implies that the feasible set of $\Delta_1$ can be represented by the matrix $[\bA^\top,~ \bI,~ -\bI]^\top$ and an integer vector. Thus, the new representation of $\Delta_1$ is totally unimodular, and $\conv(\Delta_1)$ is given by its continuous relaxation. 
	\qed
\end{proof}

\begin{proof}\!\emph{of Lemma~\ref{lem:conv:Delta:1}~}	
	The set $\Delta_1$ can be written as 
	\begin{align*}
		\Delta_1 
		&= \left( \set{0} \times \set{\bm 0} \right) \cup \left( \set{0,1} \times \cZ\backslash\set{\bm 0} \right).
	\end{align*}
	Since $\conv(A \times B) = \conv(A) \times \conv(B)$ and $\conv(A \cup B) = \conv(\conv(A) \cup \conv(B))$, we arrive at
	\begin{align*}
		\conv(\Delta_1) 
		&= \conv \left( \left( \set{0} \times \set{\bm 0} \right) \cup \left( [0,1] \times \conv(\cZ \backslash \{ \bm 0 \} ) \right) \right) \\[1ex]
		&= \set{ (w, \z) \in \R^{1+d}: \exists \lambda \in [0,1] \text{~s.t.~} \begin{array}{l} \bF^0 \z \geq \bm 0, ~ 0 \leq w \leq \lambda \\ [1ex] \z^\top \bm f^+_k \geq \lambda, ~ \forall k \in \cK, \\ [1ex] \z^\top \bm f_l^- \leq \lambda, ~ \forall l \in \cL \end{array} }
	\end{align*}
	The proof concludes by projecting out the variable $\lambda$ using the Fourier-Motzkin elimination approach.	
	\qed
\end{proof}

\begin{proof}\!\emph{of Lemma~\ref{lem:conv:Z:strong:hierarchy}~}
	By \citep[Lemma~3]{wei2022ideal}, we have
	\begin{align}
		\label{eq:strong:hierarchy}
		\textstyle
		\conv(\cZ\backslash\set{\bm 0}) = \set{ \z \in [0,1]^d: ~ \begin{array}{l}
				1 \leq \sum_{i \in [d-1]} z_i - (d-2) z_d, \\[1ex]
				z_d \leq z_i, ~ \forall i\in[d-1]
		\end{array}}.
	\end{align}
	The proof concludes by applying Lemma~\ref{lem:conv:Delta:1}.
	\qed
\end{proof}

\begin{proof} \!\emph{of Theorem~\ref{thm:general}~}
	Recall the definition of $\widetilde \cT$ and $\Delta_p$. By letting 
	\begin{align*}
		\gb_i := (s_i, \x_i), \, \bm \gamma := \t, \, \gd := (\w, \z), \, \Delta := \Delta_p, \, \bC_i = [-1, \a_i^\top], \, \C_i = \set{0}, \, \forall i \in [p],
	\end{align*}
	we can represent the set $\widetilde \cT$ as an instance of the set $\cW$ defined as in~\eqref{eq:W}. Then, Proposition~\ref{prop:separable} yields
	\begin{align*}
		\cl\conv(\widetilde \cT) \!=\!
		\set{(\x, \z, \s, \w, \t) \!\in\! \R^{d+d+p+p+p} : 
			\begin{array}{l}
				h^\pi(s_i, w_i) \leq t_i, ~ \forall i \in [p] \\ [1ex]
				\a_i^\top \x_i = s_i, ~ \forall i \in [p] \\ [1ex]
				(\w, \z) \in \conv(\Delta_p) 
			\end{array}
		}.
	\end{align*}
	In the following we characterize $\cl\conv(\cT)$ in terms of $\cl\conv(\widetilde \cT)$. 
	Let $\cT' := \{(\tau, \x, \z): \exists \s,  \w, \t \text{~s.t.~} (\x, \z, \s, \w. \t) \in \widetilde \cT,  \ones^\top \t = \tau \}$.
	By Proposition~\ref{prop:connected}, we have
	\begin{align*}
		\cl\conv(\cT) 
		= \cl \big(\conv(\Proj_{\tau, \x, \z} (\cT'))\big)
		= \cl \big(\Proj_{\tau, \x, \z}(\conv (\cT'))\big).
	\end{align*}
	Moreover, applying Lemma~\ref{lem:tools}\,\ref{lem:separable} yields 
	$$\conv(\cT') \!=\! \{ (\tau, \x, \z): \exists \s, \w, \t \text{ s.t. } (\x, \z, \s, \w, \t) \in \conv(\widetilde \cT), \, \ones^\top \t = \tau \}.$$
	By letting 
	\begin{align*}
		\begin{array}{c}
			\gm := (\tau, \x, \z), ~ \bm \eta := (\tau, \x, \z, \s, \w, \t), ~ \cU := \R \times \conv(\widetilde \cT), \\ [1ex]
			\bA := [\bI_{1+2d}, ~ \bm 0], ~ \bB := (-1, \bm 0, \bm 0, \bm 0, \bm 0, \ones_p)^\top, ~ \bm b := 0,
		\end{array}
	\end{align*}
	we observe that $\Proj_{\tau, \x, \z}(\conv (\cT')) = \{ \gm : \exists \bm \eta \in \cU \text{ s.t. } \bA \bm \eta = \gm, ~ \bB \bm \eta = \bm b \}$ as in Lemma~\ref{lem:tools}\,\ref{lem:cl}. 
	Moreover, the first requirement of Lemma~\ref{lem:tools}\,\ref{lem:cl} is trivially satisfied as the variable $\tau$ is free in the set $\cU$. In addition, we have the set 
	\begin{align*}
		&\set{\bm \eta \in \cl(\cU):~ \bA \bm \eta = \bm 0,~ \bB \bm \eta = \bm b}\\
		= &\set{(0, \bm 0, \bm 0, \s, \w,\t):~
			\begin{array}{l}
				(\w, \bm 0) \in \conv(\Delta_p), ~ \ones ^\top \t=0 \\ [1ex]
				s_i = \a_i^\top \x_i,\, h^\pi(s_i,w_i)\leq t_i,\,\forall i\in[p]
			\end{array}
		} \\
		= &\set{(0, \bm 0, \bm 0, \s, \w,\t):  ~\w= \bm 0,~ \s =\bm 0,~ h^\pi(0,0)\leq t_i,\,\forall i\in[p], ~\ones ^\top \t=0} = \set{\bm 0},
	\end{align*}
	where the first equation holds by the definition of $\cl\conv(\widetilde \cT)$ and the fact that $\bA \bm \eta = \bm 0$ implies $(\tau,\x,\z)=(0,\bm 0,\bm 0)$, and the second equation holds because, by definition of $\Delta_p$ we have $0 \leq w_i \leq \ones^\top \z_i$ which enforces $w_i=0$ as $\z_i=\bm 0$ for all $i\in[p]$, and the last equation holds as the function $h$ satisfies $h(0)=0$, which along with $\ones ^\top \t=0$ implies $\t=\bm 0$. 
	Thus, we can apply Lemma~\ref{lem:tools}\,\ref{lem:cl} to conclude that
	\begin{align*}
		\cl\conv(\cT) = \{ (\tau, \x, \z): \exists \s, \w, \t \text{ s.t. } (\x, \z, \s, \w, \t) \in \cl\conv(\widetilde \cT), ~ \ones^\top \t = \tau \}.
	\end{align*}  
	The proof concludes by projecting out the variable $\t$ using the Fourier-Motzkin elimination approach.
	\qed
\end{proof}

The convex hull of $\Delta_p$ relies on the set 
$$\cZ_i := \{ \z_i \in \R^{d_i} : \exists \tilde \z \in \cZ ~ \text{s.t.} ~ \tilde \z_i = \z_i, ~ \tilde \z_j = \bm 0,  \forall j \neq i \}$$ 
for every $i \in [p]$. In particular, given any $i \in [p]$, we assume that 
\begin{align}
	\label{eq:Zi:0}
	\conv(\cZ_i \backslash \set{\bm 0}) = \set{ \z_i \in \R^{d_i} : \bF_i^0 \z_i \geq \bm 0, ~ \begin{array}{l} \z_i^\top \bm f^+_{ik} \geq 1, ~ \forall k \in \cK_i, \\ [1ex] \z_i^\top \bm f_{il}^- \leq 1, ~ \forall l \in \cL_i
	\end{array} }.
\end{align}
\begin{lemma}
	\label{lem:conv:Delta:p}
	Given the representation of $\conv(\cZ_i \backslash \set{\bm 0})$ as in~\eqref{eq:Zi:0} for any $i \in [p]$, then
	\begin{align*}
		\conv(\Delta_p)= \set{ (\w, \z) \in \R^{p+d} : \begin{array}{l} \bm 0 \leq \w \leq \bm 1, ~ \ones^\top \w \leq 1, ~ \bF_i^0 \z_i \geq \bm 0, ~ \forall i \in [p], \\ [1ex] \z_i^\top \bm f^+_{ik} \geq w_i, ~ \forall i \in [p], \forall k \in \cK_i, \\ [1ex] \z_i^\top \bm f_{il}^- \leq 1, ~ \forall i \in [p], \forall l \in \cL_i, \\ [1ex] \z_i^\top \bm f_{il}^- \leq \z_i^\top \bm f^+_{ik}, ~ \forall i \in [p],\forall k \in \cK, \forall l \in \cL
		\end{array} }.
	\end{align*}
\end{lemma}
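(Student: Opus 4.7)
The plan is to mirror the argument for Lemma~\ref{lem:conv:Delta:1}, generalized from one partition to $p$ partitions. The key structural input, enforced by Assumption~\ref{asmp:general} (see the indexing rationale preceding~\eqref{eq:indexing}), is that for every $\z\in\cZ$ at most one block $\z_i$ can be nonzero, so that $\cZ$ decomposes as the disjoint union $\{\bm 0\}\cup\bigcup_{i\in[p]}\bigl\{\z\in\cZ:\ \z_i\in\cZ_i\setminus\{\bm 0\},\ \z_j=\bm 0\ \forall j\neq i\bigr\}$. Lifting this via the inequalities $w_i\leq\ones^\top\z_i$ (which force $w_j=0$ whenever $\z_j=\bm 0$) yields a decomposition of $\Delta_p$ into $p+1$ disjoint pieces: the origin $\{(\bm 0,\bm 0)\}$, and for each $i\in[p]$ the piece obtained by embedding $\{0,1\}\times(\cZ_i\setminus\{\bm 0\})$ into the $i$-th block with all other blocks pinned to $\bm 0$.

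First I would apply $\conv(A\times B)=\conv(A)\times\conv(B)$ to each single-partition piece, using~\eqref{eq:Zi:0} to describe $\conv(\cZ_i\setminus\{\bm 0\})$; this gives $p+1$ polyhedral sets whose union contains $\Delta_p$ and whose convex hull equals $\conv(\Delta_p)$ by the identity $\conv(\bigcup_k A_k)=\conv(\bigcup_k\conv(A_k))$. Introducing nonnegative multipliers $\lambda_0,\lambda_1,\dots,\lambda_p$ with $\lambda_0+\sum_i\lambda_i=1$ and expressing each scaled piece in terms of $(w_i,\z_i)$, this step produces the lifted description
\begin{equation*}
\bm 0\leq\w,\quad w_i\leq\lambda_i,\quad \textstyle\sum_{i\in[p]}\lambda_i\leq 1,\quad \bF_i^0\z_i\geq\bm 0,\quad \z_i^\top\bm f_{ik}^+\geq\lambda_i,\quad \z_i^\top\bm f_{il}^-\leq\lambda_i
\end{equation*}
for all $i\in[p],\,k\in\cK_i,\,l\in\cL_i$.

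Finally I would project out the multipliers $\lambda_i$ by Fourier--Motzkin elimination. For each $i$, eliminating $\lambda_i$ between its lower bounds $\{0,w_i,\z_i^\top\bm f_{il}^-\}_l$ and its upper bounds $\{\z_i^\top\bm f_{ik}^+\}_k$ produces the per-block inequalities $w_i\leq\z_i^\top\bm f_{ik}^+$ and $\z_i^\top\bm f_{il}^-\leq\z_i^\top\bm f_{ik}^+$, exactly as in the proof of Lemma~\ref{lem:conv:Delta:1}. Pairing these same lower bounds against the global coupling $\sum_i\lambda_i\leq 1$ yields $\ones^\top\w\leq 1$ and the individual bounds $\z_i^\top\bm f_{il}^-\leq 1$, together with the box constraints $\w\leq\ones$. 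The main obstacle will be managing this global coupling cleanly: because each $\lambda_i$ has an independent upper envelope from the $\bm f_{ik}^+$'s, the elimination decouples block-by-block into $p$ copies of the scalar argument of Lemma~\ref{lem:conv:Delta:1}, with the simplex slack accounting precisely for the ``$\leq 1$'' right-hand sides and the $\ones^\top\w\leq 1$ coupling in the final description.
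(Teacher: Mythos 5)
Your decomposition of $\Delta_p$ into the origin plus $p$ single-block pieces, the product/union convex-hull identities, the lifted description with simplex multipliers $\lambda_1,\dots,\lambda_p$, and the concluding Fourier--Motzkin step are exactly the route the paper takes, so at the level of strategy the proposal matches the paper's proof.

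However, the step you yourself flag as ``the main obstacle'' --- eliminating the $\lambda_i$ in the presence of the coupling $\sum_{i}\lambda_i\leq 1$ --- does not decouple block-by-block, and the claim that pairing the lower bounds of the $\lambda_i$ against the coupling yields only $\ones^\top\w\leq 1$ and the individual inequalities $\z_i^\top\bm f_{il}^-\leq 1$ is not correct. Fourier--Motzkin applied to $\sum_i\lambda_i\leq 1$ together with the per-block lower bounds $\lambda_i\geq 0$, $\lambda_i\geq w_i$, $\lambda_i\geq\z_i^\top\bm f_{il}^-$ produces \emph{every} inequality of the form $\sum_{i\in[p]}g_i\leq 1$ with $g_i\in\{0,\,w_i\}\cup\{\z_i^\top\bm f_{il}^-:\ l\in\cL_i\}$, and the mixed choices are genuinely new constraints. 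Concretely, take $p=2$ and $\cZ=\{(0,0),(1,0),(0,1)\}\subseteq\{0,1\}^2$, so that $\cZ_i\setminus\{\bm 0\}=\{1\}$ and $\conv(\cZ_i\setminus\{\bm 0\})=\{z_i:\ z_i\geq 1,\ z_i\leq 1\}$ is of the form~\eqref{eq:Zi:0} with $\bm f^+_{i1}=\bm f^-_{i1}=1$. The elimination produces $z_1+z_2\leq 1$, which is valid for $\Delta_p$ and is violated by $(\w,\z)=(\bm 0,\ones)$; yet $(\bm 0,\ones)$ satisfies every inequality in the description you end up with. Hence the projected system you write down (which coincides with the one stated in the lemma) is a strict relaxation of $\conv(\Delta_p)$, and the cross-block inequalities $\sum_{i\in[p]}g_i\leq 1$ cannot be discarded. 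This issue is absent from Lemma~\ref{lem:conv:Delta:1}, where there is a single multiplier and hence no coupling to distribute; the generalization to $p>1$ is precisely where the extra inequalities appear, and your proof (like the paper's own) needs to either carry them or prove them redundant --- and the example above shows they are not redundant in general.
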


\begin{proof}\!\emph{of Lemma~\ref{lem:conv:Delta:p}~}
	The set $\Delta_p$ can be written as the union of $p+1$ sets. Namely,
	\begin{align*}
		\Delta_p 
		&= \left( \set{\bm 0} \times \set{\bm 0} \right) \bigcup\limits_{i \in [p]} \left( \set{\bm 0, \e_i} \times \left( \times_{k \in [i-1]} \set{\bm 0} \times \cZ_i\backslash\set{\bm 0} \times_{k \in \{i+1, \dots, p\}} \set{\bm 0} \right) \right).
	\end{align*}
	Since $\conv(A \times B) = \conv(A) \times \conv(B)$ and $\conv(A \cup B) = \conv(\conv(A) \cup \conv(B))$, we arrive at
	\begin{align*}
		\conv(\Delta_p) 
		= \set{ (\w, \z) \in \R^{p+d} : \exists \bm \lambda\in\R^p \text{~s.t.~} \begin{array}{l} \bm 0 \leq  \bm \lambda \leq \ones, ~\ones^\top \bm \lambda \leq 1, ~ \w \leq \bm \lambda,  \\ [1ex] \bF_i^0 \z_i \geq \bm 0, ~ \forall i \in [p] \\ [1ex] \z_i^\top \bm f^+_{ik} \geq \lambda_i, ~ \forall i \in [p], \forall k \in \cK_i, \\ [1ex] \z_i^\top \bm f_{il}^- \leq \lambda_i, ~ \forall i \in [p], \forall l \in \cL_i\end{array} }
	\end{align*}
	The proof concludes by projecting out the variable $\bm \lambda$ using the Fourier-Motzkin elimination approach. 
	\qed
\end{proof}

\begin{proof}\!\emph{of Lemma~\ref{conv:Omega:cardinality}~}
	Note that the matrix
	\begin{align*}
		\bA = \begin{bmatrix}
			\ones^\top & ~ \phantom{-}\bm 0^\top \\
			\!\!\bI & ~ -\bI \\
			\bm 0^\top & \phantom{-}\ones^\top
		\end{bmatrix}
	\end{align*}
	is totally unimodular. To see this, we partition the rows of $\bA$ into the two disjoint sets $\{1\}$ and $\{2, \dots, d+2\}$. As such, $\bA$ is totally unimodular because every entry in $\bA$ is $0$, $+1$, or $-1$, every column of $\bA$ contains at most two non-zero entries, and two nonzero entries in a column of $\bA$ with the same signs belong to either $\{1\}$ or $\{2, \dots, d+2\}$, whereas two nonzero entries in a column of $\bA$ with the opposite signs belong to $\{2, \dots, d+2\}$. Hence, by \citep[Theorem~2]{heller1956extension}, the matrix $\bA$ is totally unimodular. 
	As the feasible set of $\Omega$ is represented by the matrix $[\bA^\top,~ \bI,~ -\bI]^\top$ and an integer vector, the set $\Omega$ is totally unimodular, and $\conv(\Omega)$ is given by its continuous relaxation.
	\qed
\end{proof}

We next provide ideal and lifted descriptions of $\conv(\Omega)$ for general~$\cZ$. Lemma~\ref{lem:conv:Omega}\,\ref{lem:conv:Omega:0} is particularly useful when the set $\Omega \backslash \set{\bm 0}$ is totally unimodular. 
In this case, we can provide an ideal description for $\conv(\Omega)$. On the other hand, Lemma~\ref{lem:conv:Omega}\,\ref{lem:conv:Omega:O_i} is useful when the set $\cZ$ is totally unimodular.
\begin{lemma}
	\label{lem:conv:Omega}
	The following holds.
	\begin{enumerate}[label=(\roman*)]
		\item \label{lem:conv:Omega:0} Suppose that $\conv(\Omega \backslash \set{\bm 0})$ admits the following ideal representation
		\begin{align*}
			\conv(\Omega \backslash \set{\bm 0}) = \set{ \bm \delta \in \R^{2d} : \bF^0 \bm \delta \geq \bm 0, ~ \begin{array}{l} \bm \delta^\top \bm f^+_k \geq 1, ~ \forall k \in \cK, \\ [1ex] \bm \delta^\top \bm f_l^- \leq 1, ~ \forall l \in \cL
			\end{array} }.
		\end{align*}
		Then, we have
		\begin{align*}
			\conv(\Omega) \!=\! \set{ \bm \delta \in \R^{d+d} : \bF^0 \bm \delta \geq \bm 0, ~ \begin{array}{l} \bm \delta^\top \bm f^+_k \geq 0, ~ \forall k \in \cK, \\ [1ex] \bm \delta^\top \bm f_l^- \leq 1, ~ \forall l \in \cL, \\ [1ex] \bm \delta^\top \bm f_l^- \leq \bm \delta^\top \bm f^+_k, ~ \forall k \in \cK, \forall l \in \cL
			\end{array} }.
		\end{align*}
	
		\item \label{lem:conv:Omega:O_i} Let $\cO_i = \{ \z \in \R^d: z_i = 0 \}$. Then, we have
		\begin{align*}
			\conv(\Omega) = \set{(\w, \z) \!:\! \exists \tilde \z^0, \dots, \tilde \z^d \text{ s.t. } 
				\begin{array}{l}
					\w \in \R_+^d, ~\ones^\top \w \leq 1, \\ [1ex]
					\z = \tilde \z^0 + \sum_{i \in [d]} \tilde \z^i \\ [1ex]
					\tilde \z^0 \in (1 - \ones^\top \w) \cdot \conv(\cZ) \\ [1ex]
					\tilde \z^i \in w_i \cdot \conv(\cZ \backslash \cO_i), ~\forall i \in [d]
				\end{array}
			}.
		\end{align*}
	\end{enumerate}
\end{lemma}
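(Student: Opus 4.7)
The plan is to prove the two parts separately; both are straightforward applications of disjunctive programming that exploit how $\Omega$ naturally splits once the value of $\w$ is fixed, and both mirror the approach used in the proofs of Lemmas~\ref{lem:conv:Delta:1} and~\ref{lem:conv:Delta:p}.

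For part~(i), I would write $\Omega = \set{\bm 0} \cup (\Omega \backslash \set{\bm 0})$ and invoke the identity $\conv(A \cup B) = \conv(\conv(A) \cup \conv(B))$ so that every $\bm \delta \in \conv(\Omega)$ can be expressed as $(1-\mu)\bm 0 + \mu\bm \delta'$ with $\mu \in [0,1]$ and $\bm \delta' \in \conv(\Omega \backslash \set{\bm 0})$. Substituting the given ideal description of $\conv(\Omega \backslash \set{\bm 0})$ and setting $\bm \delta = \mu \bm \delta'$ yields the lifted system
\begin{align*}
\bF^0 \bm \delta \geq \bm 0, \quad \bm \delta^\top \bm f_k^+ \geq \mu ~ \forall k \in \cK, \quad \bm \delta^\top \bm f_l^- \leq \mu ~ \forall l \in \cL, \quad 0 \leq \mu \leq 1.
\end{align*}
Fourier-Motzkin elimination of $\mu$ then produces exactly the claimed inequalities: pairing $\mu \geq 0$ with $\mu \leq \bm \delta^\top \bm f_k^+$ gives $\bm \delta^\top \bm f_k^+ \geq 0$; pairing $\mu \leq 1$ with $\mu \geq \bm \delta^\top \bm f_l^-$ gives $\bm \delta^\top \bm f_l^- \leq 1$; and pairing $\mu \leq \bm \delta^\top \bm f_k^+$ with $\mu \geq \bm \delta^\top \bm f_l^-$ gives the cross inequalities $\bm \delta^\top \bm f_l^- \leq \bm \delta^\top \bm f_k^+$.

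For part~(ii), the key combinatorial observation is that any binary $\w$ with $\ones^\top \w \leq 1$ lies in $\set{\bm 0, \e_1, \dots, \e_d}$, and the constraint $\w \leq \z$ forces $\z \in \cZ$ when $\w = \bm 0$ and $\z \in \cZ \backslash \cO_i$ (equivalently $z_i = 1$) when $\w = \e_i$. Hence
\begin{align*}
\Omega = \big(\set{\bm 0} \times \cZ\big) \cup \bigcup_{i \in [d]} \big(\set{\e_i} \times (\cZ \backslash \cO_i)\big),
\end{align*}
and since $\cZ$ is finite every piece on the right is a bounded polytope. I would then apply the standard Balas lifted formulation for the convex hull of a union of polytopes, which introduces multipliers $\lambda_0, \dots, \lambda_d \geq 0$ summing to one and represents each variable as a sum of scaled components living in $\lambda_i$ times the corresponding piece. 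Writing the $\w$-component of the $i$-th piece as $\lambda_i \e_i$ forces $w_i = \lambda_i$ for $i \in [d]$ and $\lambda_0 = 1 - \ones^\top \w$; defining $\tilde \z^i$ as the $\z$-component of the $i$-th piece and reading off the remaining constraints then yields exactly the stated lifted description.

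The main obstacle will be the careful Fourier-Motzkin bookkeeping in part~(i), namely confirming that no additional inequalities beyond those listed survive and that nothing is missed, including degenerate cases where $\cK$ or $\cL$ is empty. A secondary subtlety in part~(ii) is verifying that the Balas formulation gives $\conv(\Omega)$ exactly rather than merely $\cl\conv(\Omega)$; this is ensured because each piece is a bounded polytope, so the scaled set $\lambda_i \cdot \conv(\cZ \backslash \cO_i)$ collapses cleanly to $\set{\bm 0}$ when $\lambda_i = 0$ and no closure operation is required.
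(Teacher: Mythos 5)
Your proposal is correct and follows essentially the same route as the paper's proof: part~(i) is handled by writing $\Omega = \set{\bm 0} \cup (\Omega \backslash \set{\bm 0})$, applying $\conv(A \cup B) = \conv(\conv(A) \cup \conv(B))$, and Fourier--Motzkin eliminating the disjunction multiplier, while part~(ii) uses the same $(d+1)$-piece decomposition of $\Omega$ according to the value of $\w$ together with the Balas lifted formulation and the substitution $\w = \bm \lambda$ to project out the multipliers. Your explicit Fourier--Motzkin pairings and the remark on boundedness of the pieces (so that no closure is needed) fill in details the paper leaves implicit.
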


\begin{proof} 
	For case~\ref{lem:conv:Omega:0}, notice that the set $\Omega$ can be written as the union of two sets $\Omega = \left( \set{\bm 0} \times \set{\bm 0} \right) \cup \left( \Omega \backslash\set{\bm 0} \right)$. We thus have
	\begin{align*}
		\conv(\Omega) 
		&= \conv \left( \left( \set{\bm 0} \times \set{\bm 0} \right) \cup \left( \conv(\Omega \backslash \{ \bm 0 \} ) \right) \right) \\[1ex]
		&= \set{ \bm \delta \in \R^{d+d}: \exists \lambda \in [0,1] \text{~s.t.~} \bF^0 \bm \delta \geq \bm 0, ~\begin{array}{l} \bm \delta^\top \bm f^+_k \geq \lambda, ~ \forall k \in \cK, \\ [1ex] \bm \delta^\top \bm f_l^- \leq \lambda, ~ \forall l \in \cL \end{array} } .
	\end{align*}
	The proof concludes by projecting out the variable $\lambda$ using the Fourier-Motzkin elimination approach.
	
	For case~\ref{lem:conv:Omega:O_i}, note that the set $\Omega$ can be written as the union of $d+1$ sets, that is, $\Omega = \left( \set{\bm 0} \times \cZ \right) \bigcup \left(\cup_{i \in [d]} \left( \set{\e_i} \times (\cZ \backslash \cO_i) \right)\right)$.
	We thus have
	\begin{align*}
		&\conv(\Omega) \\
		=& \conv \left( \left( \set{\bm 0} \times \conv(\cZ) \right) \bigcup \left(\cup_{i \in [d]}\set{\e_i} \times \conv(\cZ \backslash \cO_i) \right) \right) \\[1ex]
		=& \set{(\w, \z) : 
			\begin{array}{l}
				\exists \w^0, \dots, \w^d \\
				\exists \z^0, \dots, \z^d \\
				\exists \bm \lambda \in \R_+^d
			\end{array} \text{ s.t. } 
			\begin{array}{l}
				\ones^\top \bm \lambda \leq 1, \\ [1ex]
				\w = (1 - \ones^\top \bm \lambda) \w^0 + \sum_{i \in [d]} \lambda_i \w^i \\ [1ex]
				\z = (1 - \ones^\top \bm \lambda) \z^0 + \sum_{i \in [d]} \lambda_i \z^i \\ [1ex]
				(\w^0, \z^0) \in \set{\bm 0} \times \conv(\cZ) \\ [1ex]
				(\w^i, \z^i) \in \set{\e_i} \times \conv(\cZ \backslash \cO_i ), ~\forall i \in [d]
			\end{array}
		}.
	\end{align*}
	The proof concludes by introducing the new variables $\tilde \z^0 = (1 - \ones^\top \bm \lambda) \z^0$ and $\tilde \z^i = \lambda_i \z^i$ for every $i \in [d]$, and then projecting out the variable $\bm \lambda$ using the observation that $\w = \bm \lambda$. 
	\qed
\end{proof}

Armed with Lemma~\ref{lem:conv:Omega}, we are ready to provide characterizations for $\conv(\Omega)$ in the cases of weak and strong hierarchy constraints.

\begin{proof}\!\emph{of Lemma~\ref{conv:Omega:weak:hierarchy}~}
	First, note that 
	\begin{align*}
		\Omega \backslash \{ \bm 0 \} & \textstyle= \{(\w,\z) \in \{0,1\}^{d+d}:~ \w \leq \z, ~\ones^\top \w \leq 1, ~ \sum_{i\in[d-1]} z_i \geq 1 \}.
	\end{align*}
	Based on this let us consider the matrix
	\begin{align*}
		\bA = \begin{bmatrix}
			\bI_d \quad & -\bI_d \\[0.5ex]
			\ones_{d}^\top \quad & \bm 0_{d}^\top \\[0.5ex]
			\bm 0_{d}^\top \quad & (-\ones_{d-1}, 0)^\top
		\end{bmatrix}.
	\end{align*}
	This matrix is totally unimodular as we can partition rows of $\bA$ into the subsets $[d]$ and $\{ d+1, d+2 \}$ such that they satisfy the requirements of \citep[Theorem~2]{heller1956extension}. 
	Thus, the set $\Omega \backslash \{ \bm 0 \}$ is represented by constraints based on the matrix $[\bA^\top,~ \bI,~ -\bI]^\top$ and an integer vector. Thus, $\Omega \backslash \{ \bm 0 \}$ admits a totally unimodular representation, and $\conv(\Omega \backslash \{ \bm 0 \})$ is given by its continuous relaxation. The proof concludes by applying Lemma~\ref{lem:conv:Omega}\,\ref{lem:conv:Omega:0}.
	\qed
\end{proof}

\begin{proof}\!\emph{of Lemma~\ref{conv:Omega:strong:hierarchy}~}
	Note that $\cZ$ is totally unimodular. Hence, the continuous relaxation of $\cZ$ gives $\conv(\cZ)$.
	For any $i \in [d-1]$, we also have $\cZ \backslash \cO_i = \{ \z \in \{0,1\}^d: z_i = 1, ~ z_d \leq z_j, ~ \forall j \in [d-1]  \}.$
	It is easy to see that the continuous relaxation of $\cZ \backslash \cO_i$ gives its convex~hull. Besides, we have $\cZ \backslash \cO_d = \{ \bm 1 \}$. The proof follows by applying Lemma~\ref{lem:conv:Omega}\,\ref{lem:conv:Omega:O_i}.
	\qed
\end{proof}

\newpage
\renewcommand\thesection{\Alph{section}}
\setcounter{figure}{0}
\renewcommand{\thefigure}{B.\arabic{figure}}

\section{Additional Numerical Results}
\label{appendix:B}

\begin{figure}[h]
	\centering
	\begin{subfigure}[b]{0.95\textwidth}
		\centering
		\includegraphics[width=\textwidth]{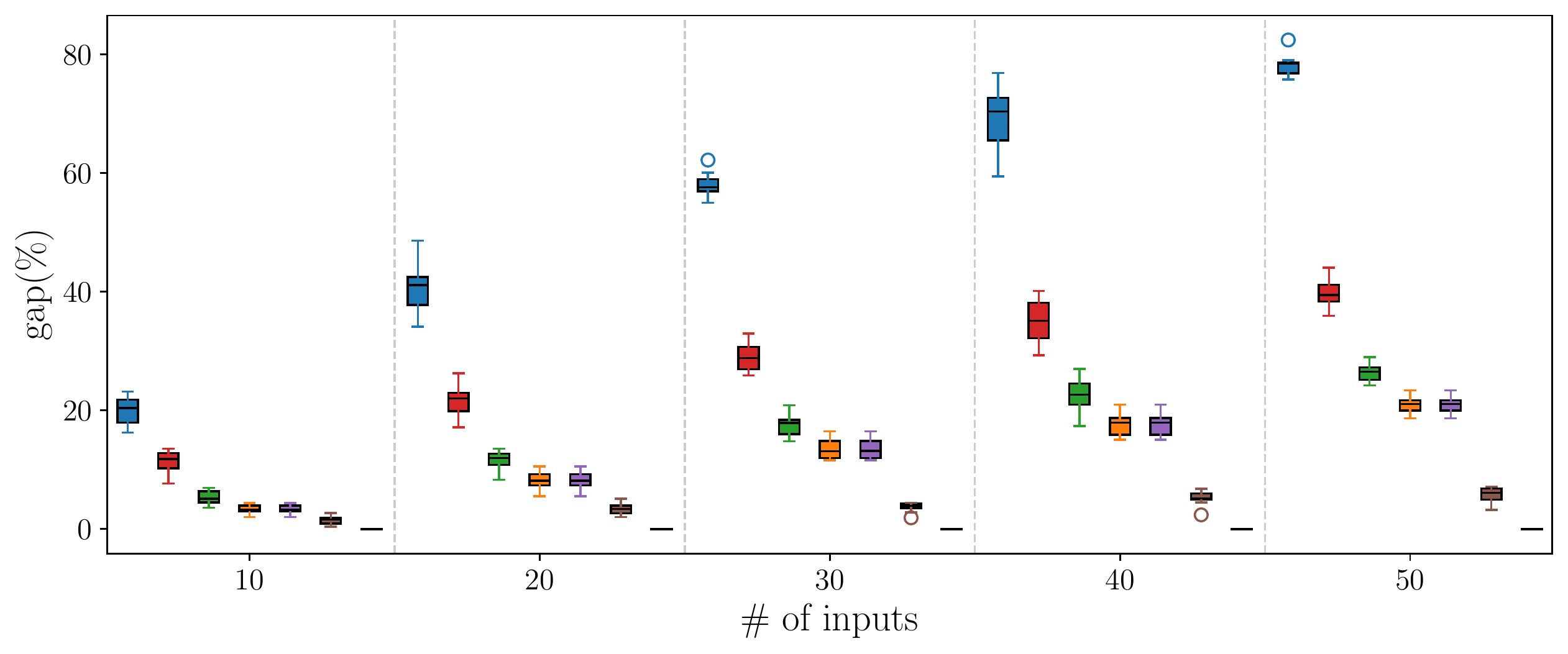}
	\end{subfigure} \\
	\begin{subfigure}[b]{0.95\textwidth}
		\centering
		\includegraphics[width=\textwidth]{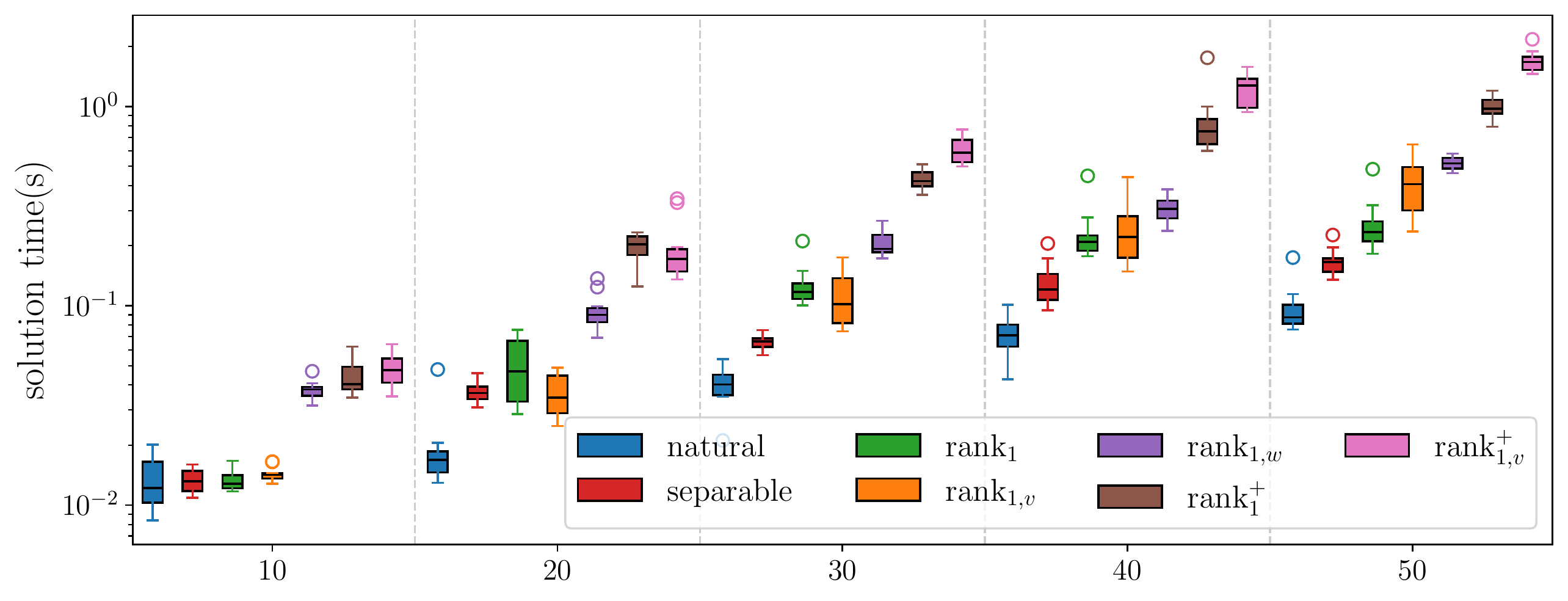}
	\end{subfigure}
	\caption{Comparison of different continuous relaxations for $N = 100$, $\pi = 0.1$ as $p$ (and thus $d$) varies.}
	\label{fig:change:p}
\end{figure}

\begin{figure}
	\centering
	\begin{subfigure}[b]{\textwidth}
		\centering
		\includegraphics[width=\textwidth]{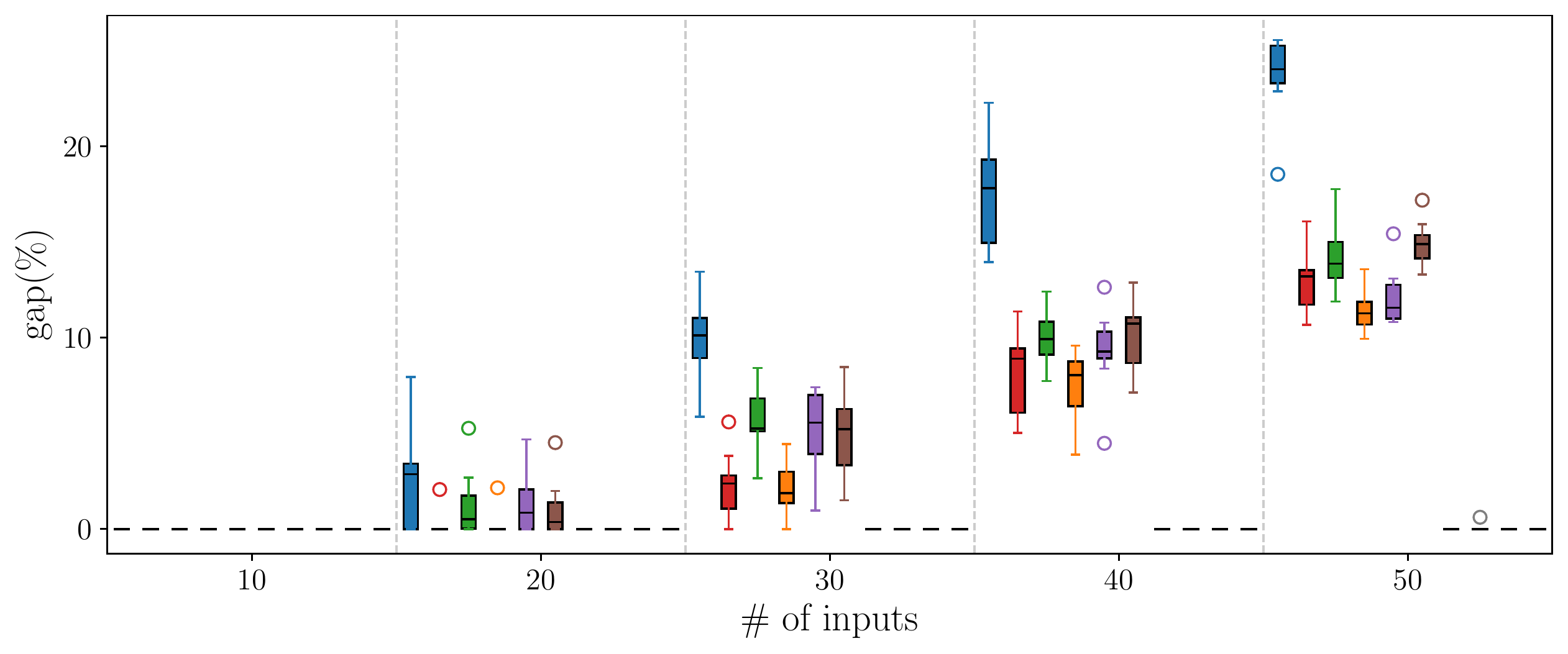}
	\end{subfigure} \\
	\begin{subfigure}[b]{\textwidth}
		\centering
		\includegraphics[width=\textwidth]{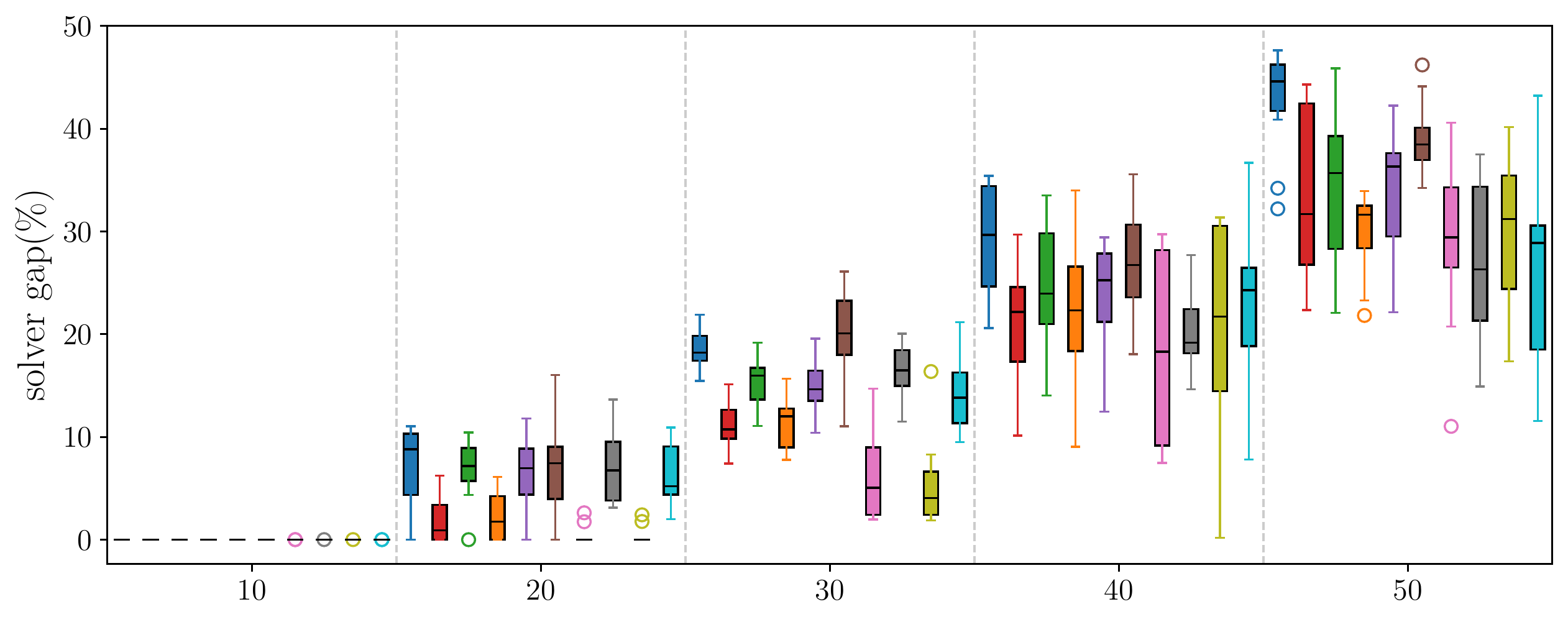}
	\end{subfigure} \\
	\begin{subfigure}[b]{\textwidth}
		\centering
		\includegraphics[width=\textwidth]{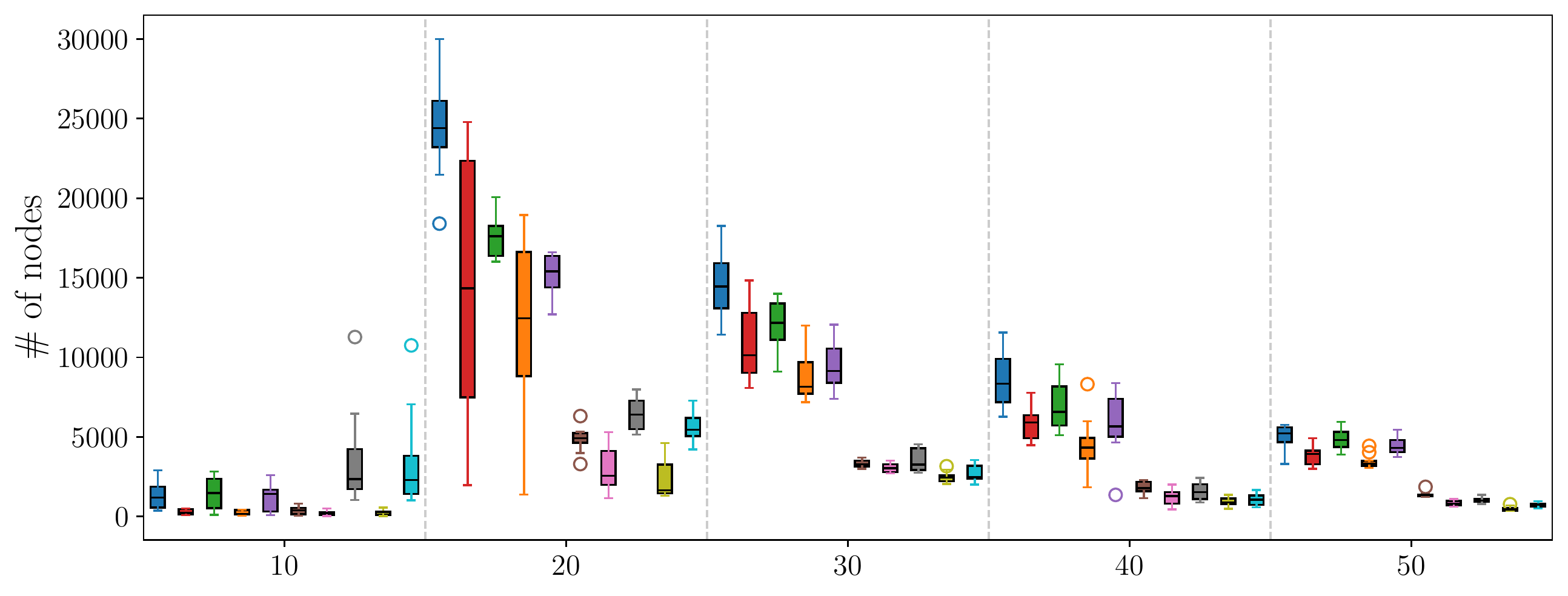}
	\end{subfigure} \\
	\begin{subfigure}[b]{\textwidth}
		\centering
		\includegraphics[width=\textwidth]{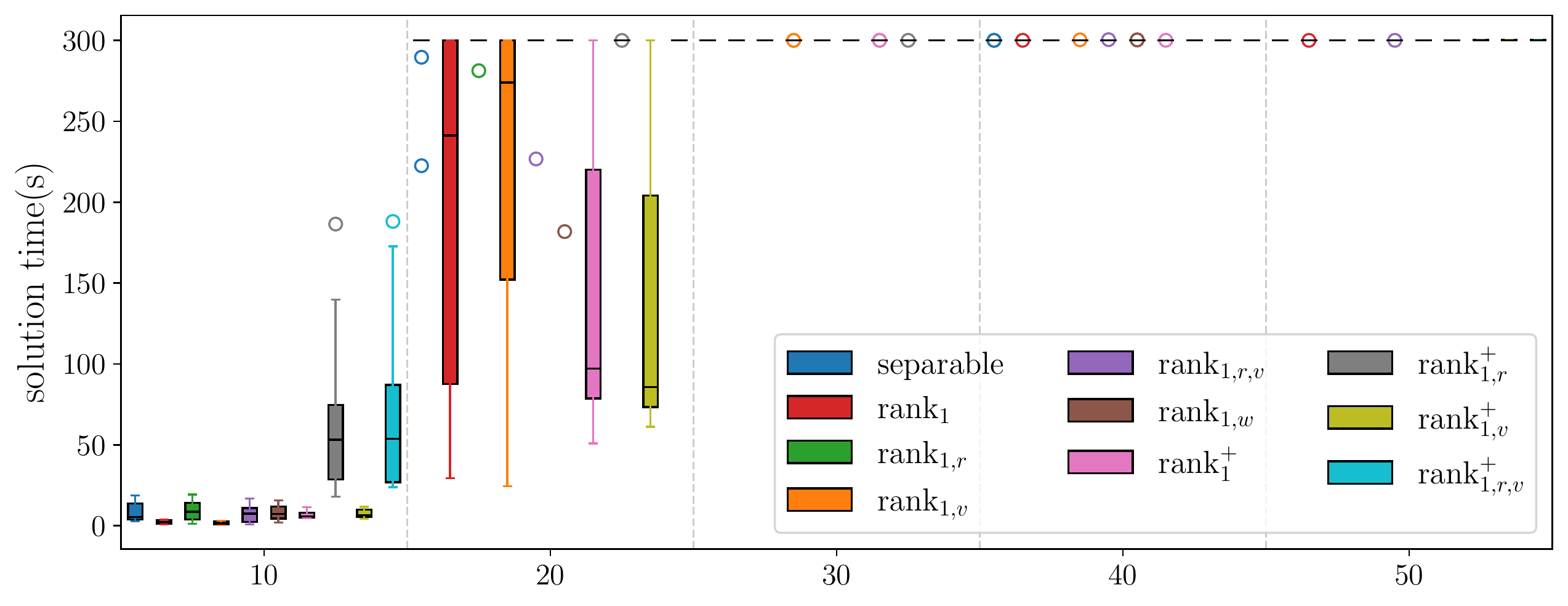}
	\end{subfigure}
	\caption{Performance of the  B\&B algorithm for $N = 100$, $\pi = 0.1$ as $p$  (and thus $d$) varies.}
	\label{fig:change:p:bb}
\end{figure}

\begin{figure}
	\centering
	\begin{subfigure}[b]{\textwidth}
		\centering
		\includegraphics[width=\textwidth]{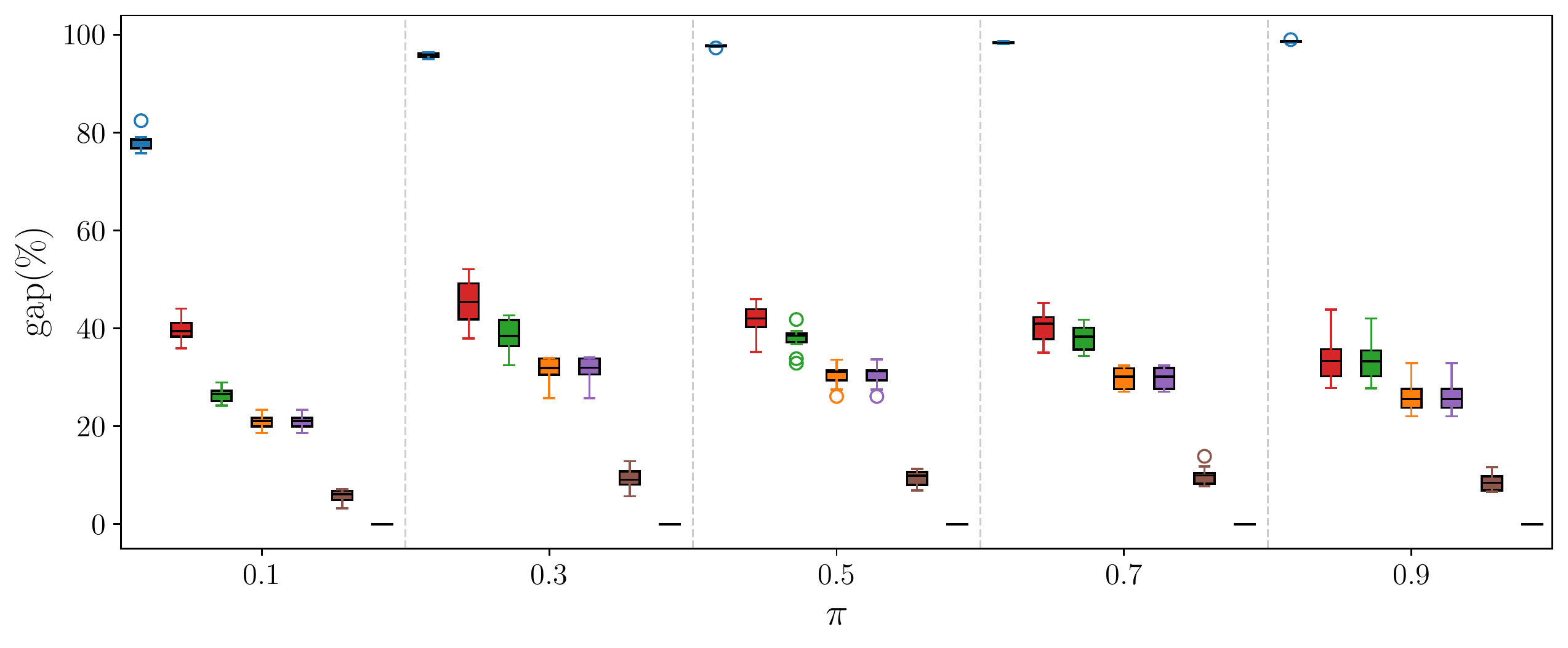}
	\end{subfigure} \\
	\begin{subfigure}[b]{\textwidth}
		\centering
		\includegraphics[width=\textwidth]{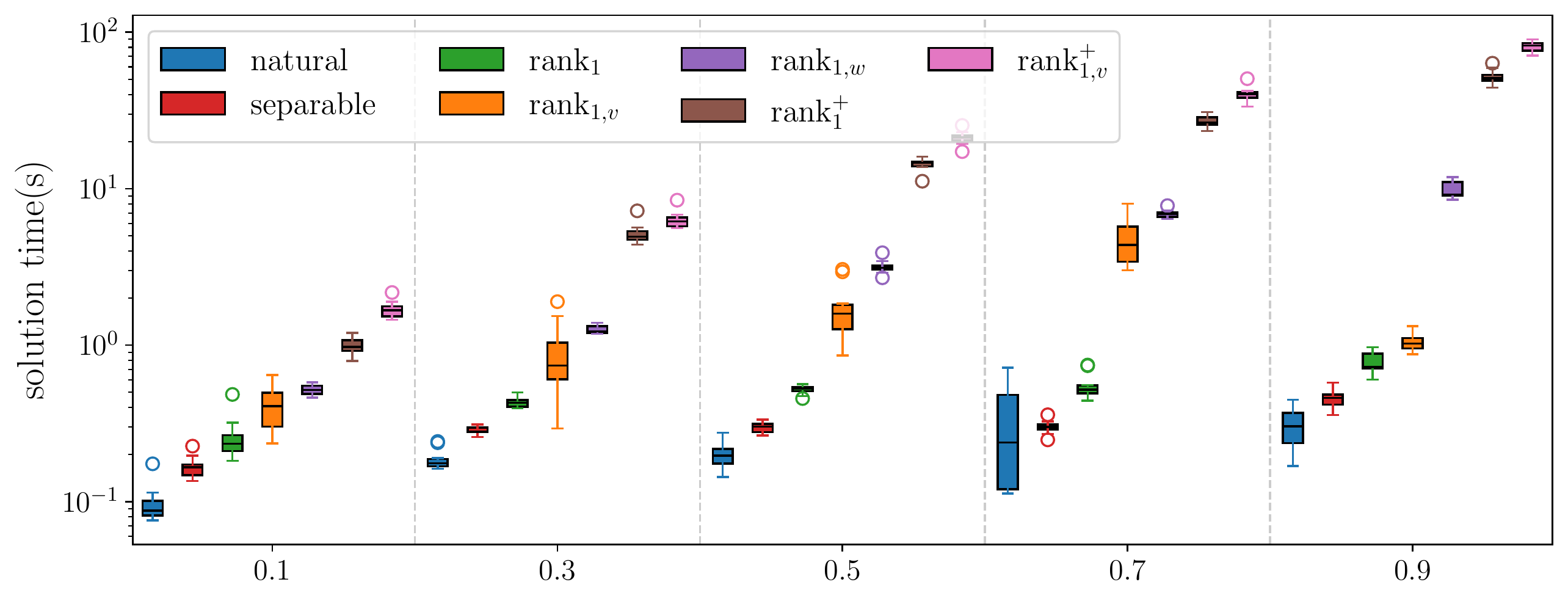}
	\end{subfigure}
	\caption{Comparison of different continuous relaxations for $(p, N) \!=\! (50, 100)$ as $\pi$ varies.}
	\label{fig:change:prob}
\end{figure}

\begin{figure}
	\centering
	\begin{subfigure}[b]{\textwidth}
		\centering
		\includegraphics[width=\textwidth]{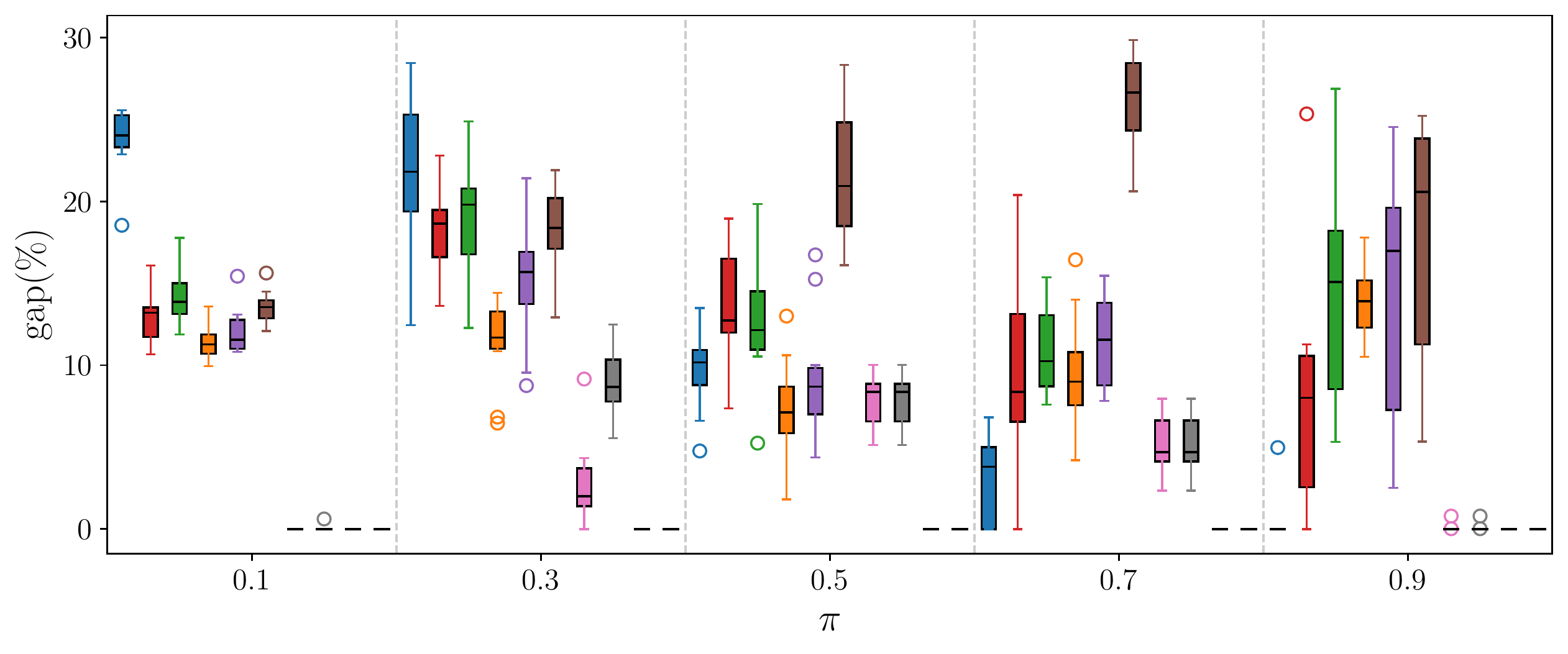}
	\end{subfigure} \\
	\begin{subfigure}[b]{\textwidth}
		\centering
		\includegraphics[width=\textwidth]{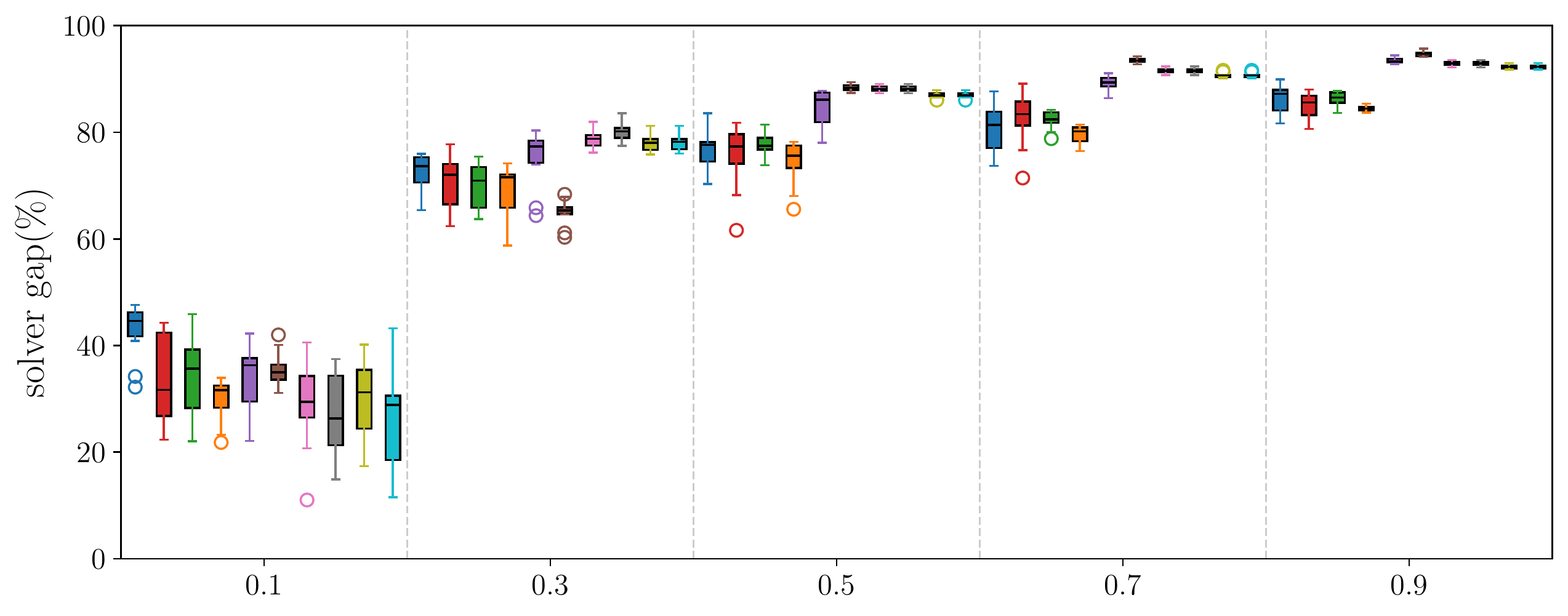}
	\end{subfigure} \\
	\begin{subfigure}[b]{\textwidth}
		\centering
		\includegraphics[width=\textwidth]{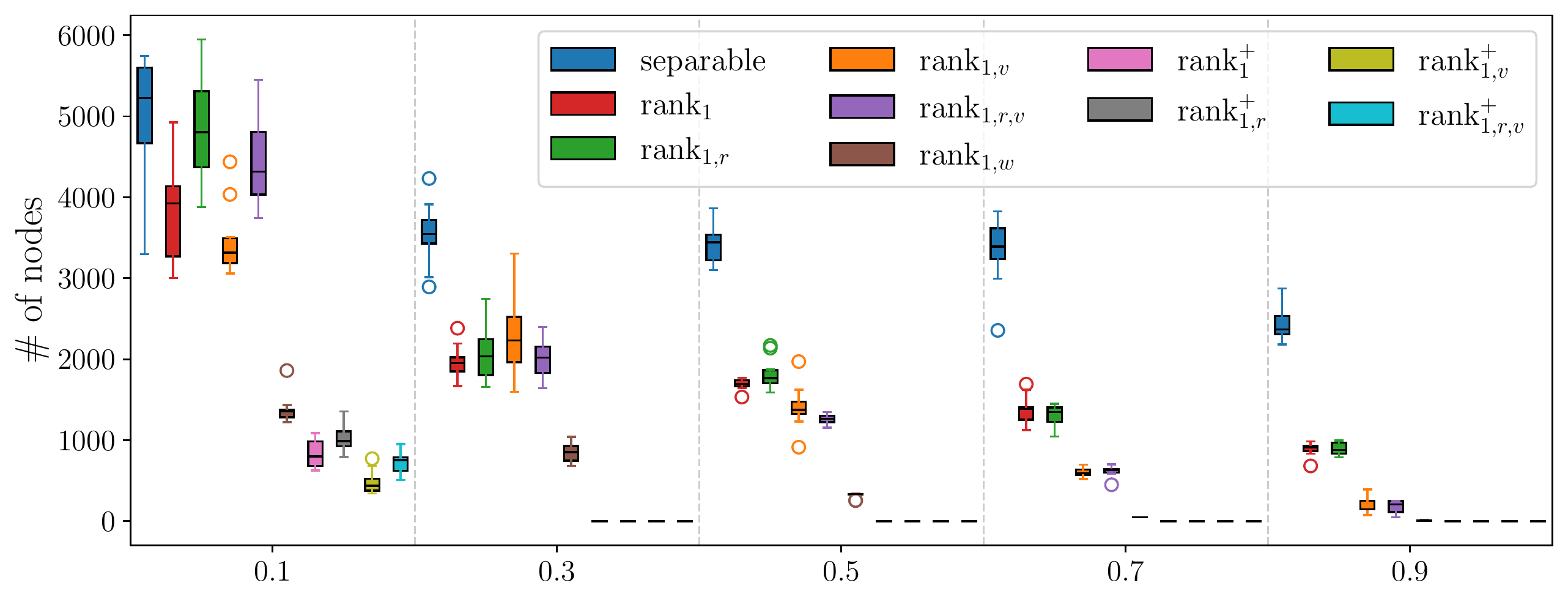}
	\end{subfigure}
	\\
	\begin{subfigure}[b]{\textwidth}
		\centering
		\includegraphics[width=\textwidth]{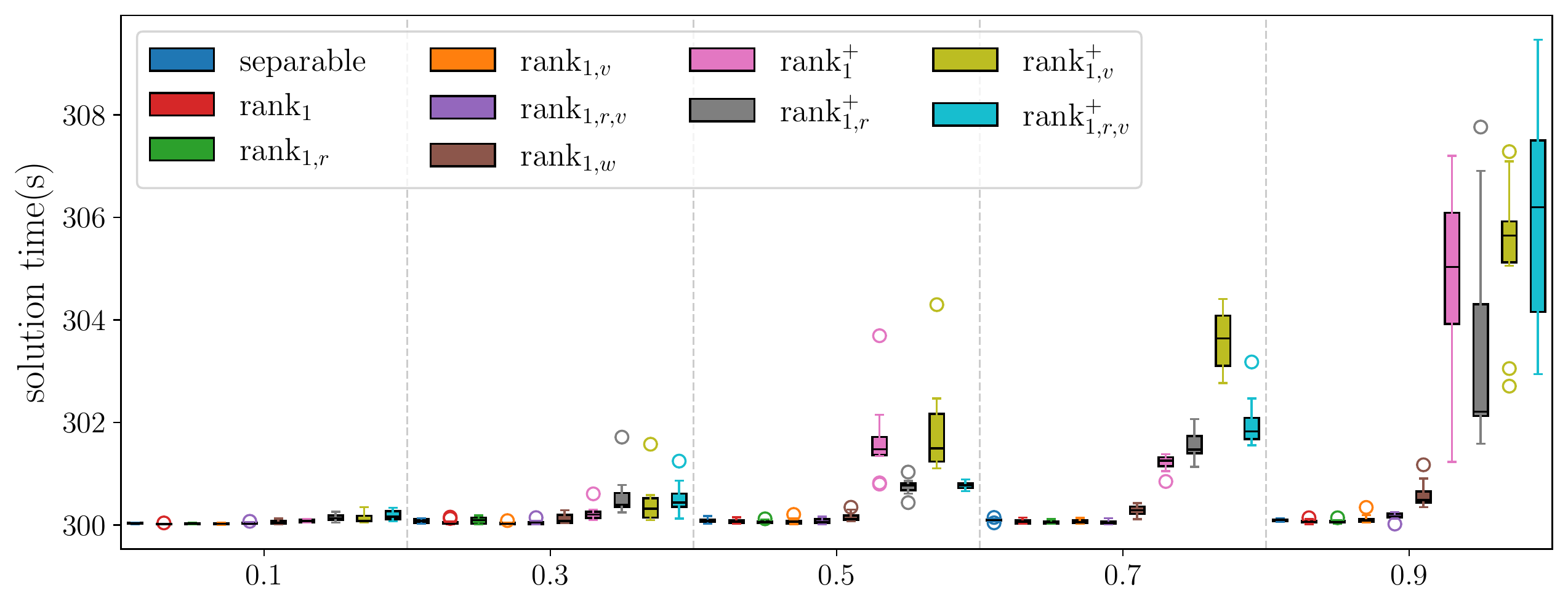}
	\end{subfigure}
	\caption{Performance of the B\&B algorithm for $(p, N) \!=\! (50, 100)$ as $\pi$ varies.}
	\label{fig:change:prob:bb} 
\end{figure}

\end{document}